\theoremstyle{definition}
\newcommand{\Maa}{{\on{Maa}}}
\newcommand{\mdiag}[2]{\begin{pmatrix} #1 & \\ & #2 \end{pmatrix}}
\newcommand{\adiag}[2]{\begin{pmatrix} & #1 \\ #2 & \end{pmatrix}}
\newcommand{\unip}[1]{\begin{pmatrix} 1 & #1 \\ & 1 \end{pmatrix}}
\newtheorem{thm}[equation]{Theorem}
\newtheorem{cor}[equation]{Corollary}
\newtheorem{conj}[equation]{Conjecture}
\newtheorem{prop}[equation]{Proposition}
\newtheorem{lem}[equation]{Lemma}
\newtheorem{rmk}[equation]{Remark}
\newtheorem{defn}[equation]{Definition}
\numberwithin{equation}{subsection}
\newcommand{\bpm}{\begin{pmatrix}}
\newcommand{\epm}{\end{pmatrix}}
\newcommand{\bsm}{\begin{smallmatrix}}
\newcommand{\esm}{\end{smallmatrix}}
\newcommand{\bspm}{\left(\begin{smallmatrix}}
\newcommand{\espm}{\end{smallmatrix}\right)}
\newcommand{\bm}{\begin{matrix}}
\renewcommand{\em}{\end{matrix}}
\newcommand{\bbm}{\begin{bmatrix}}
\newcommand{\ebm}{\end{bmatrix}}
\newcommand{\bs}{\backslash}
\newcommand{\C}{\mathbb{C}}
\newcommand{\A}{\mathbb{A}}
\newcommand{\Q}{\mathbb{Q}}
\newcommand{\Z}{\mathbb{Z}}
\newcommand{\R}{\mathbb{R}}
\newcommand{\diag}{\operatorname{diag}}
\newcommand{\pr}{\operatorname{pr}}
\newcommand{\Span}{\operatorname{Span}}
\newcommand{\Arg}{\on{Arg}}
\newcommand{\la}{\langle}
\newcommand{\ra}{\rangle}
\newcommand{\vph}{\varphi}
\newcommand{\on}{\operatorname}
\renewcommand{\Re}{\on{Re}}
\renewcommand{\Im}{\on{Im}}
\newcommand{\gm}{\gamma}
\newcommand{\Gm}{\Gamma}
\newcommand{\sg}{\sigma}
\newcommand{\wt}{\widetilde}
\newcommand{\Mat}{\on{Mat}}
\newcommand{\f}{\mathfrak}
\renewcommand{\c}{\mathcal}
\newcommand{\ord}{\on{ord}}
\newcommand{\SL}{\wt{SL}}
\newcommand{\ad}{{\on{ad}}}
\newcommand{\hs}{\Big|_{\frac12}^{\on{Hol}}}
\newcommand{\ms}{\Big|_{\frac12}^{\on{Maa}}}
\newcommand{\sgn}{\operatorname{sgn}}
\newcommand{\1}{\mathbbm{1}}
\newcommand{\fin}{\mathrm{f}}
\begin{document}

\author{Joseph Hundley and Qiao Zhang}
\email{jhundley@math.siu.edu}
\address{Math. Department, Mailcode 4408,
Southern Illinois University Carbondale,
1245 Lincoln Drive Carbondale, IL 62901}
\email{q.zhang@tcu.edu}
\address{Department of Mathematics
Texas Christian University
Fort Worth, TX 76129}

\title{Fourier Coefficients of Theta Functions at Cusps other than Infinity}
\date{\today}
\maketitle
\tableofcontents

\section{Introduction}

In this paper, we use the adelic theory to study the Fourier coefficients of twisted theta functions at different cusps. The main results are Theorems~\ref{thm:MainFirst} and~\ref{thm: level higher}.

The theory of theta functions has a long history, going back
to Jacobi, and has a broad range of applications
throughout various branches of mathematics.
See, for example,~\cite{Mumford}.

In number theory, theta functions may be used to
study representation numbers of quadratic forms
(see~\cite[Chapter 11]{Iwaniec}), or
to shed light on the Shimura correspondence (\cite{Shimura})
between modular forms of integral weights and those of
half integral weights (see~\cite{Shintani},~\cite{Katok-Sarnak},~\cite{Waldspurger}).
Moreover, in 1976, Serre and Stark~\cite{Serre-Stark} answered a question
of Shimura, by showing that theta functions actually span the
space of all modular forms of weight $\frac 12$ for $\Gm_0(N)$.

An important development in the theory of theta functions
was their interpretation in terms of a representation
of the metaplectic group.  This point of view goes back
to~\cite{Weil}. It leads naturally to a vast generalization
of the Shimura correspondence, known as the theta
correspondence,
as well as a local analogue in the representation theory of
groups over local fields. (See the survey article~\cite{Prasad}, and its references.)
For $GL(2),$ the theory was significantly explicated by~\cite{Gelbart},
using explicit results of~\cite{Kubota}.
The results of Serre and Stark were explained from this
point of view by Gelbart and Piatetski-Shapiro~\cite{Gelbart-PS}.

The theory of Fourier expansions of automorphic forms
at various cusps goes back to~\cite{Roelcke} and~\cite{Maass}.
We briefly review the concept.
Our treatment follows~\cite{Iwaniec}.

To define Fourier coefficients at a cusp $\f a$ one must choose a suitable ``scaling'' matrix.
This notion is actually relative to a choice of discrete group
$\Gm.$  Assume for simplicity that $-I \in \Gm.$
Then a scaling matrix for $\f a$ relative to $\Gm$ is
a matrix
 $\sg\in SL(2, \R)$ which maps $\infty$ to $\f a$
 and conjugates the stabilizer of $\f a$ in $\Gm$ to the
 stabilizer of $\infty$ in $SL(2, \Z).$
 Having fixed such a matrix $\sg$
 to define Fourier coefficients at $a$ of a modular form
 $f$ for the group $\Gm,$
   one
 studies the function
 $f\hs \sg,$ where $\hs \sg$ is the weight one half slash operator corresponding to $\sg.$ Its definition is reviewed in
 Section \ref{ss:slash ops}.
In general, the function $f\hs \sg$ has a Fourier
expansion supported on some additive shift of the
integers. That is
$$f\hs \sg(z)
=
\sum_{n=0}^\infty
A_{f}(\sg, n+ \kappa_{f, \f a})
e^{2 \pi i (n+ \kappa_{f, \f a} )},
$$
for some constant $\kappa_ \f a\in \Q \cap [0,1)$ and coefficients $A_f ( \sg, n+\kappa_{f, \f a}), (n \ge 0).$
We refer to the coefficients $A_f ( \sg, n+\kappa_{f, \f a})$ appearing
in this expansion as the Fourier coefficients of $f$ at $\f a$, defined relative
to $\sg.$
Their dependence on the choice of $\sg$ is addressed by
lemma \ref{lem: dependence on choice of scaling matrix}.
Note that a modular form for $\Gm$ is also a modular form
for any subgroup of $\Gm.$ Replacing $\Gm$ by a subgroup
may alter which matrices $\sg$ are considered
suitable to be scaling matrices.

The constant $\kappa_{f, \f a}$ is called the cusp parameter
of $f$ at $\f a.$  It actually depends only on the multiplier
system of $f$ (defined as in ~\cite[\S 2.6, \S 2.7]{Iwaniec}).
As a byproduct of our computations, we show that
$\kappa_{f, \f a}$ is $0$ for all cusps $\f a$ and all of the
theta functions which we consider.
That is, the Fourier expansion of a theta function at
every cusp is supported on the integers. In fact, it is supported
on the squares.

This paper was prompted by some numerical computations
of Dorian Goldfeld and Paul Gunnells.
Let $\chi_2$ and $\chi_3$ be the unique nontrivial quadratic Dirichlet characters modulo $4$ and $3$ respectively, let
$\chi = \chi_2\chi_3$ (a Dirichlet character modulo $12$)
and let
$$\theta_\chi(z) = \sum_{n=1}^\infty \chi(n) e^{2\pi i n^2 z}.$$
Then $\theta_\chi$ is a modular form
of weight $1/2$ and level 576.
Goldfeld and Gunnells computed the Fourier coefficients of
$\theta_\chi$
 and discovered that (for suitable scaling matrices) the sequence of
its Fourier coefficients at every other
cusp was simply a scalar multiple of the sequence of
the Fourier coefficients at infinity.
For the cusps which can be transported
to $\infty$ by a Fricke involution, this is
expected, in view of the results of~\cite{Asai}.  (See also~\cite{Kojima}.)
For the other cusps, the result is more surprising.  But the phenomenon
also suggests an explanation: this theta function must correspond
to an element of the Weil representation which is fixed, up to scalars,
by a group which is larger than $\Gm_0(576)$, and acts transitively on
the cusps. In Theorem~\ref{thm:MainFirst} we prove this.

A remark is in order. Recall that for integral $k,$ the
weight $k$
slash operators define a right action of $SL(2, \R)$
on the vector space of modular forms of weight $k.$
For half-integral weight modular forms, this is not
the case.  Rather, we have
\[
\hs \sg_1 \hs \sg_2 = \pm \hs (\sg_1 \sg_2),
\]
with the occasional minus sign resulting from branch cuts
in the square root. Thus the ``larger group'' that we
define is actually a subgroup of a covering group.

The result can be recast classically as follows.
Let $$
\Gm^{(24)}
=\bpm 1 & \\ & 24 \epm SL(2, \Z) \bpm 1& \\ & 24^{-1} \epm.
$$
Then $\Gm^{(24)}$ acts transitively on the cusps and contains
a scaling matrix for each of them relative to
$\Gm_0(576).$ Moreover, there is a function
$\zeta$ mapping $\Gm^{(24)}$ into the group of $24$th roots
of $1$ such that $\theta_\chi \hs \sg = \zeta(\sg) \theta_\chi$
for each $\sg \in \Gm^{(24)}.$
The function $\zeta$ is not a homomorphism, but it does
satisfy $\zeta(\sg_1) \zeta(\sg_2) = \pm \zeta(\sg_1\sg_2).$

Theorem~\ref{thm: level higher} extends this
to the twisted theta functions of higher levels.
Here again, we were led by numerical computations and a
conjecture of Goldfeld and Gunnells.  We first state the
Goldfeld-Gunnells conjecture for the special case we refer to
as the five twist.
Let  $\chi_5$ be the unique primitive quadratic character modulo $5.$
Set
$$\theta_{\chi_5} (z) =
\sum_{n=1}^\infty
\chi_5(n)\chi(n)e^{2\pi in^2z}.$$
Then the conjecture of Goldfeld and Gunnells, motivated by
their numerical computations, is as follows.
\begin{conj}[Goldfeld-Gunnells]
Let
$$
a = \frac{2 \sin( \frac{4\pi}5)}{\sqrt{5}}= \frac{\sqrt{(10 - 2 \sqrt 5)/5}}2 \approx 0.52573,
\qquad
b = \frac{2 \sin( \frac{2\pi}5)}{\sqrt{5}}=\frac{\sqrt{(10 + 2 \sqrt 5)/5}}2 \approx 0.85065.
$$
Let $\f a = u/w \in \Q,$
and let $\sg_\f a \in SL(2, \R)$ be any scaling matrix
for $\f a.$
  If $5 \nmid w$ or $25 \mid w$ then
$$
|A_{\theta_{\chi_5}}(\sg_\f a, n^2) |
= \begin{cases}
1, & \text{ if } 5 \nmid n, \\ 0, & \text{ if } 5 \mid n.
\end{cases}
$$
On the other hand, if $5 || w$ then either
$$
|A_{\theta_{\chi_5}}(\sg_\f a, n^2) | = \begin{cases} a, & \text{ if }5 \nmid n , \\
2b, & \text{ if } 5 \mid n,
\end{cases},
$$
or
$$
|A_{\theta_{\chi_5}}(\sg_\f a, n^2) | = \begin{cases} b, & \text{ if }5 \nmid n , \\
2a, & \text{ if } 5 \mid n,
\end{cases}.
$$
\end{conj}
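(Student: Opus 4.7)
The plan is to establish the conjecture via adelic analysis, parallel to the approach used for Theorem~\ref{thm:MainFirst}. First I would realize $\theta_{\chi_5}$ as the image, under the adelic--classical dictionary, of a factorizable vector $\phi = \otimes_v \phi_v$ in the Weil representation of the metaplectic cover of $SL_2(\A)$: the archimedean component $\phi_\infty$ is a Gaussian (accounting for the $e^{2\pi i n^2 z}$ shape), and at each finite prime $p$ the local vector $\phi_p$ is a Schwartz function on $\Q_p$ determined by the local component of $\chi_5\chi$ at $p$. Since $\chi_5$ is ramified only at $5$ and $\chi$ only at $2$ and $3$, the local vector $\phi_p$ is essentially the indicator of a small $\Z_p$-subset, twisted by the local character, for every $p$ outside $\{2,3,5\}$.

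Given a cusp $\f a = u/w$ with scaling matrix $\sg_\f a$, the formalism expresses $A_{\theta_{\chi_5}}(\sg_\f a, n^2)$ as a value of the Weil-representation action of a metaplectic lift of $\sg_\f a$ applied to $\phi$. I would choose a representative for $\sg_\f a$ whose $p$-adic components are trivial outside the primes dividing $w$ together with $\{2,3\}$; the computation then factors over a finite set of primes. By the same stability argument that underlies Theorem~\ref{thm:MainFirst}, the action at primes other than $5$ alters $\phi$ only by a scalar of modulus one. Thus the nontrivial content of the conjecture is entirely concentrated in the local Weil-representation computation at $p=5$.

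The three cases in the conjecture correspond to three distinct behaviors at $p=5$. When $5 \nmid w$, the scaling matrix may be chosen with trivial $5$-adic component, so $\phi_5$ is preserved and the pattern $|A|=1$ for $5\nmid n$, $|A|=0$ for $5\mid n$ propagates from $\infty$. When $25 \mid w$, a similar stability argument applies: the $5$-adic component of $\sg_\f a$ lies in the normalizer of the stabilizer of $\phi_5$, and again one recovers the original pattern up to a scalar. The delicate case is $5 || w$, where the $5$-adic scaling matrix is essentially $\diag(5, 5^{-1})$ composed with a Weyl element; its action on $\phi_5$ is a suitably normalized local Fourier transform on $\Z/5\Z$. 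Since $\phi_5$ is supported on $\Z_5^*$ and transforms by $\chi_5$, this Fourier transform produces a vector whose $n^2$-coefficient involves the Gauss-sum quantities $2\sin(2\pi/5)/\sqrt 5$ and $2\sin(4\pi/5)/\sqrt 5$, namely $b$ and $a$; the two alternatives in the conjecture correspond to the two square classes of $u$ modulo $5$. The factor $2$ appearing in front of $a$ or $b$ when $5 \mid n$ arises because, for $n = 5m$, two distinct orbits on the relevant local space give the same square $n^2$ modulo the conductor, and their contributions add coherently rather than cancel.

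The main obstacle, beyond the bookkeeping of local Schwartz functions, will be tracking the metaplectic cocycle---including the $\pm$ branch-cut signs in the slash-operator composition law---with enough care to produce the exact numerical values $a$, $b$, $2a$, $2b$ rather than merely quantities of the correct order of magnitude, and to verify that the dichotomy within $5 || w$ is governed precisely by the square class of $u$ modulo $5$. A secondary point, addressed by Lemma~\ref{lem: dependence on choice of scaling matrix}, is that $|A_{\theta_{\chi_5}}(\sg_\f a, n^2)|$ must be shown to be independent of the choice of scaling matrix, so that the conclusion depends only on $\f a$ itself.
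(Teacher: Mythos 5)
Your overall strategy---adelize $\theta_{\chi_5}$ as a factorizable vector in the Weil representation, observe that only the place $5$ affects absolute values, and split into three cases according to $v_5(w)$---is exactly the route the paper takes (via Theorem~\ref{thm: level higher} and the explicit local matrices $M_5$), and your treatment of the cases $5\nmid w$ and $25\mid w$ agrees with the paper's.

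In the crucial case $5\,\|\,w$, however, the mechanism you propose does not produce the numbers $a$ and $b$. You attribute them to a ``suitably normalized local Fourier transform'' of $\phi_5^{\chi_5}$, i.e.\ to a Gauss sum of $\chi_5$. But by Corollary~\ref{action of flip} the normalized Weyl element $\bspm & 1/5 \\ -5 & \espm$ sends $\phi_5^{\chi_5}$ to $\tau(\chi_5)\chi_5(2)^{-1}5^{-1/2}$ times a root of unity times $\phi_5^{\chi_5}$, and since $|\tau(\chi_5)|=\sqrt5$ this scalar is unimodular: no Gauss sum of $\chi_5$ can yield $a\approx0.526$ or $b\approx0.851$. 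The element that creates the new phenomenon is the unipotent $\bspm 1 & u/w \\ & 1 \espm$ with $v_5(u/w)=-1$, which does \emph{not} preserve the line $\C\phi_5^{\chi_5}$; it mixes $\phi_5^{\chi_5}$ with $\phi_5^\circ$ and $\1_{5\Z_5}$, forcing one into the three-dimensional invariant space of Section~\ref{sec: the subspace rp} (classically, the span of $\theta_{\chi_5}$, $\theta_\chi$, $\theta_\chi^{(5)}$, which is the whole point of Theorem~\ref{thm: level higher}). The values $a,b$ then arise as $\bigl|\pm\cos(2\pi v/5)\pm i\sin(2\pi v/5)/\sqrt5\bigr|$, an interference between the $\theta_{\chi_5}$- and $\theta_\chi$-components of the transformed function, and the factor $2$ for $5\mid n$ comes from $|1/\sqrt5-\sqrt5|=4/\sqrt5$, the coherent sum of the $\theta_\chi$- and $\theta_\chi^{(5)}$-contributions---close in spirit to your ``two orbits'' remark, but only visible once the full $3\times3$ matrix is tracked. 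So the plan as written, following a single local vector up to scalars of modulus one, cannot close; you must enlarge to the invariant subspace. (Also, the dichotomy within $5\,\|\,w$ is governed by the square class of $u\cdot\overline{(w/5)}$ modulo $5$, not of $u$ alone.)
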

\begin{rmk}
The fact that $|A_{\theta_{\chi_5}}(\sg_\f a, n^2) |$
is independent of the choice of $\sg_\f a$
is an easy consequence of  Lemma \ref{lem: dependence on choice of scaling matrix}.
\end{rmk}
This conjecture was generalized to several larger
primes $p$ by Gunnells, who replaced
 $\chi_5$ by a Dirichlet
character $\chi_p$ modulo $p$ which factors through the squaring
map $(\Z/p\Z)^\times \to  (\Z/p\Z)^\times.$
For  $\f a = u/w,$ with $p\nmid w$ or $p^2 \mid w,$
the extension is direct. For $p||w,$ Gunnells
predicts that $|A_{\theta_{\chi_p}}(\sg_\f a, n^2)|$ depends only on the
image of $n^2$ in $\Z/p\Z,$ so that the full sequence
$(|A_{\theta_{\chi_p}}(\sg_\f a,n^2)|)_{n=1}^{\infty}$ is determined by
a subsequence of length $(p+1)/2.$ Moreover, as $\f a$
ranges over cusps, only $p-1$ distinct sequences should appear, each consisting of zeros of explicit integer polynomials.
These $p-1$ sequences come in two classes. For each class
there is an element corresponding to zero, and then a cycle
of $(p-1)/2$ other values. As $n$ runs through the nonzero
squares modulo $p,$ the sequence $ (|A_{\theta_{\chi_p}}(\sg_\f a,n^2)|)$
runs through this cycle. It may start at any point in the cycle and
this accounts for the total of $p-1$ possibilities.

For specific $p$ and $\f a,$ the  Goldfeld-Gunnells
and Gunnells
conjectures can be checked using
 theorem~\ref{thm: level higher}.
In this theorem, we
again produce a larger group which acts transitively on the cusps.
It no longer fixes the one-dimensional space spanned by our element of the Weil representation.
Instead, it fixes a finite dimensional space containing it,
and this permits us to obtain explicit results concerning
the Fourier coefficients at all the cusps.

\medskip

As an example, we consider the case $p=5.$ In this case,
the group we consider is
$$
\Gm^{(120)}
=\bpm 1 & \\ & 120 \epm SL(2, \Z) \bpm 1& \\ & 120^{-1} \epm.
$$
As a conjugate of $SL(2, \Z)$ this group certainly acts
transitively on the cusps, and it is easily verified that it
provides a scaling matrix for every cusp relative to
 $\Gm_0(14400).$
Now, let $V$ be the three dimensional complex vector space spanned by $\theta_\chi, \theta_{\chi_5}$ and $\theta_{\chi}^{(5)},$ where
$$
\theta_{\chi}^{(5)}( z) = \sum_{{n=1}}^\infty
\chi(5n) e^{2\pi i (5n)^2 z}.
$$
Then, Theorem \ref{thm: level higher} gives the following.
\begin{thm}
For each $\sg \in \Gm^{(120)},$ there
is a matrix $M(\sg^{-1}) \in GL_3(\C)$ such that
\begin{equation}\label{def of M(sig)}
\bbm\theta_{\chi_5} \hs\sg &
\theta_{\chi} \hs\sg  &\theta_{\chi}^{(5)} \hs\sg \ebm= \bbm  \theta_{\chi_5} & \theta_{\chi} &\theta_{\chi}^{(5)}  \ebm \cdot M(\sg^{-1}).
\end{equation}
\end{thm}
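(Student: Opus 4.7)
The plan is to realize each of the three theta functions $\theta_{\chi_5}, \theta_\chi, \theta_\chi^{(5)}$ as the image of a Schwartz--Bruhat function on $\A$ under the adelic theta map attached to the Weil representation of the metaplectic double cover of $SL(2)$, and then to translate the theorem into the statement that a specific three-dimensional subspace of $\c S(\A_\fin)$ is stable under a certain compact subgroup of the finite-adelic metaplectic group.

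First, I would exhibit explicit pure-tensor Schwartz--Bruhat functions $\varphi_i = \varphi_{i,\infty} \otimes \varphi_{i,\fin}$ (for $i=1,2,3$) whose theta series realize $\theta_{\chi_5}$, $\theta_\chi$, $\theta_\chi^{(5)}$ respectively. The archimedean factor is a standard Gaussian (responsible for the weight $\tfrac12$), and the finite-adelic factor is built place-by-place from characteristic functions of cosets of $\wt\Z$ twisted by $\chi_2, \chi_3, \chi_5$. The distinction among the three $\varphi_i$ is localized almost entirely at the prime $5$: the $5$-adic components have supports $\Z_5^\times$, $\Z_5 \ssm 5\Z_5$ (without the $\chi_5$ twist), and $5\Z_5 \ssm 25\Z_5$ respectively.

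Second, for $\sg \in \Gm^{(120)}$, write $\sg = d \gm d^{-1}$ with $d = \diag(1,120)$ and $\gm \in SL(2,\Z)$. Embedding $\gm$ diagonally in $SL(2,\A)$ and using the automorphy of the theta distribution under $SL(2,\Q)$, the weight $\tfrac12$ slash action of $\sg$ at the archimedean place is intertwined, up to an eighth-root metaplectic scalar, with the inverse action on the finite-adelic Schwartz--Bruhat side. Concretely, $\theta_{\chi_5}\hs\sg$, $\theta_\chi\hs\sg$, $\theta_\chi^{(5)}\hs\sg$ arise as theta series attached to $\omega_\fin(d_\fin^{-1}\gm_\fin^{-1} d_\fin)\varphi_{i,\fin}$, where $\omega_\fin$ denotes the finite part of the Weil representation. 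The theorem therefore reduces to showing that
\[
V_\fin := \Span_\C\bigl(\varphi_{1,\fin},\varphi_{2,\fin},\varphi_{3,\fin}\bigr) \subset \c S(\A_\fin)
\]
is preserved by $\omega_\fin$ restricted to the conjugated compact subgroup $d_\fin^{-1} SL(2,\wt\Z) d_\fin$, and to reading off the matrix of this action in the ordered basis $(\varphi_{1,\fin},\varphi_{2,\fin},\varphi_{3,\fin})$.

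Third, since $120 = 2^3 \cdot 3 \cdot 5$, only the primes $p \in \{2,3,5\}$ contribute nontrivially; at every other prime, $\omega_p$ acts on each $\varphi_{i,p}$ through a character. At the ramified primes I would invoke the Bruhat decomposition of $SL(2,\Q_p)$ to reduce stability of $V_p$ to three separate checks: invariance under the unipotent generators (which follows from the support of the $\varphi_{i,p}$), under the diagonal torus (a straightforward character computation), and under the Weyl element (which reduces to computing finite Fourier transforms on $\Q_p$). Assembling the resulting local matrices and multiplying by the archimedean metaplectic factor yields $M(\sg^{-1})$. The principal obstacle is the Weyl-element computation at $p = 5$: there the three local functions genuinely mix under Fourier transform, and one must verify that their Fourier transforms remain inside the three-dimensional span $V_5$ rather than leaking into the larger Schwartz--Bruhat space on $\Q_5$. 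Carrying this out, and identifying the explicit $3 \times 3$ entries (whose moduli should reproduce the algebraic numbers $a, b$ from the Goldfeld--Gunnells conjecture), is where the real content of the theorem lies; the primes $2$ and $3$ behave more mildly and require mostly bookkeeping.
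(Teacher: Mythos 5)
Your overall strategy is exactly the paper's: realize the three theta functions adelically via the Weil representation, transfer the archimedean slash by $\sg$ to the inverse finite-adelic action using the $SL(2,\Q)$-invariance of the theta distribution (Corollary \ref{slash maa to r e i fin}), and reduce to showing that a three-dimensional space of local Schwartz functions at $p=5$, together with one-dimensional spans at $2$ and $3$, is stable under the conjugated compact subgroup, checked on the generators $\bspm 1 & a/5\\ & 1\espm$, the torus, and the Weyl element. This is precisely what is carried out in Sections 4--7, with the space $V$ and its bases $B_1,B_2$ of \S\ref{subsec: case of odd p} specialized to $p=5$.

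There is, however, a concrete error that would derail the one step you yourself flag as carrying the real content. The $5$-adic components of the three Schwartz functions are $\chi_5\cdot\1_{\Z_5^\times}$, $\1_{\Z_5}$ and $\1_{5\Z_5}$, with supports $\Z_5^\times$, $\Z_5$ and $5\Z_5$: since $\chi$ has modulus $12$, the sum defining $\theta_\chi$ includes all $n$ prime to $12$, in particular multiples of $5$, and $\theta_\chi^{(5)}$ runs over all multiples of $5$, including multiples of $25$. Your stated supports $\Z_5\ssm 5\Z_5$ and $5\Z_5\ssm 25\Z_5$ are therefore wrong, and the error is not harmless: the span of $\chi_5\1_{\Z_5^\times}$, $\1_{\Z_5^\times}$ and $\1_{5\Z_5\ssm 25\Z_5}$ is \emph{not} preserved by the Weyl element, because the Fourier transform of $\1_{25\Z_5}$ is supported on $5^{-2}\Z_5$ and leaks out of the space; so the verification you defer would in fact fail for the space as you describe it. With the corrected functions the span is the paper's $V=\Span(\phi_5^{\chi_5},\phi_5^\circ,\1_{5\Z_5})$, which is genuinely invariant: Corollary \ref{action of flip} shows the flip sends $\phi_5^{\chi_5}$ to a scalar multiple of itself, and it exchanges $\phi_5^\circ$ with $\1_{5\Z_5}$ up to scalars. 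Note also that the Weyl-element computation at $5$, the explicit matrices \eqref{action of upper triangular element on V}--\eqref{action of flip on V}, and the bookkeeping of the metaplectic signs $s_\A$ and $\beta_\infty$ coming from the failure of $i_{\fin}$ to be a homomorphism are all left unperformed in your outline; that is where most of the actual work sits.
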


Like $\zeta,$ the function ${M:\Gm^{(120)} \to GL_3(\C)}$
defined implicitly by \eqref{def of M(sig)} is {\it not} a homomorphism,
but satisfies $M(\sg_1)M(\sg_2) = \pm M(\sg_1\sg_2).$
It is perhaps better understood by working with a
metaplectic covering groups.
This point of view is presented in the body of the paper.

Now  the Fourier
coefficients $A_{\theta_{\chi_5}}(\sg, m)$
 may be recovered from
\eqref{def of M(sig)}, provided one can compute $M(\sg^{-1})$
explicitly.
Specifically, if $^t \!\bbm c_1 & c_2 & c_3 \ebm$ is the first
column of $M(\sg^{-1})$ then
$A_{\theta_{\chi_5}}(\sg, m)=0$ for $m$ not a square, and
\begin{equation}
\label{c a's in terms of c i's}
A_{\theta_{\chi_5}}(\sg, n^2) = \chi(n)\cdot\begin{cases}
\chi_5(n) c_1 + c_2, & 5 \nmid n, \\
c_2+c_3, & 5 \mid n.
\end{cases}
\end{equation}

In order to compute $M(\sg)$ explicitly, it is helpful to work
one prime at a time.
If $p$ is
a prime, let
$$K_p^{(120)} = \bpm 1 & \\ & 120 \epm SL(2, \Z_p) \bpm 1& \\ & 120^{-1} \epm.$$
Clearly, $\Gm^{(120)} < K_p^{(120)}$ for each $p.$

We first describe a function $M_5: K_5^{(120)}\to GL_3(\C).$ Then $M$ will be the product of $M_5,$ similar but simpler contributions from $2$ and $3,$
and some additional factors. In defining and computing
$M_p,$ for $p=2,3,5,$ we make use of the fact (see Lemma \ref{lem: generators for K p}) that $SL(2, \Z_p)$ is generated
by the elements
\[
\left\{\bpm & 1 \\ -1 & \epm\right\}\bigcup\left\{ \bpm 1 & a \\ 0 & 1 \epm \mathrel{\left|\vphantom{\bpm 1 & a \\ 0 & 1 \epm}\right.} a \in \Z_p\right\}.
\]
Hence $K_p^{(120)}$ is generated by the conjugates of these
elements.

To describe
$M_5:K_5^{(120)} \to GL_3(\C).$
It is convenient to introduce the notation
$$
\cos_5(x) = \cos( - \{ x\}_5), \qquad \sin_5(x) = \sin(-\{ x_5\}).
$$
so that
\[
\cos_5(2\pi x) = \frac12 (e_5(x) + e_5(-x)), \qquad \sin_5(x) = -\frac1{2i} ( e_5(x) - e_5(-x)),
\]
where $\{\ \}_5$ and $e_5$ are defined as in
 section \ref{ss: local weil rep}.

Then
$$
M_5
\bpm 1& \frac a5\\ & 1\epm= \bpm
\cos_5(\frac{2\pi a}5) & -i\sin_5(\frac{2\pi a}5)&0 \\
-i\sin_5(\frac{2\pi a}5)&\cos_5(\frac{2\pi a}5) &0\\
i\sin_5(\frac{2\pi a}5)&1-\cos_5(\frac{2\pi a}5) &1
\epm
, \qquad ( a \in \Z_5),
$$
$$
M_5
\bpm a&\\&a^{-1}\epm=
\bpm \chi_5(a)&& \\
&1&\\&&1
\epm
, \qquad ( a \in \Z_5^\times)
$$
$$
M_5
\bpm &1/5\\ -5 & \epm =
\bpm -1&&\\&& \frac{1}{\sqrt{5}}\\ & \sqrt{5} & \epm
$$
$$
M_5(\gm_1 \gm_2) = \beta_5( \gm_1, \gm_2)M_5(\gm_1) M_5(\gm_2)
, \qquad (\gm_1, \gm_2 \in K_5^{(120)}),
$$
where
$\beta_v: SL(2, \Q_v) \times SL(2, \Q_v) \to  \{ \pm 1\}$ is defined for each place $v$ as in
section \ref{subsection: local metaplectic groups}.

For $p=2,$ we define a
function $M_2: K_2^{(120)} \mu_8$ (the eighth
roots of unity) by
$M_2(\sg) = \xi_2( \sg, 1),$ where
$\xi_2$ is defined as in theorem \ref{thm: k 2 8 acts on ph2 chi 2 by xi 2}.  The function $M_2$
satisfies $M_2( \sg_1) M_2( \sg_2) = M_2( \sg_1\sg_2)
\beta_2( \sg_1, \sg_2).$

Similarly, $M_3: K_3^{(120)} \to \mu_6$
is defined by $M_3(\sg) = \xi_3(\sg, 1)$ with
$\xi_3$ as in Theorem \ref{K 3 3 acts on phi chi 3 by xi 3},
and
satisfies
$$
M_3(\gm_1 \gm_2) = M_3(\gm_1) M_3(\gm_2)
\beta_3( \gm_1, \gm_2), \qquad (\gm_1, \gm_2 \in K_3^{(120)}).
$$

\begin{thm}
For any $\sg \in \Gm^{(120)},$ the equation \eqref{def of M(sig)} holds with
$$M(\sg) = s_\A(\sg)\beta_\infty( \sg^{-1}, \sg)M_2(\sg) M_3(\sg) M_5(\sg),$$
where $s_\A(\sg) \in \{ \pm 1\}$ is defined in
section \ref{ss: Global Metaplectic Group}.
\end{thm}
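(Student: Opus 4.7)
The plan is to translate the statement into one about the adelic Weil representation and then decompose the computation place by place. Using the dictionary developed in the body of the paper between classical half-integral weight forms and automorphic forms on the metaplectic cover, each of $\theta_{\chi_5}, \theta_{\chi}, \theta_{\chi}^{(5)}$ is realized as the theta series attached to a factorizable Schwartz--Bruhat function $\phi_j = \phi_{j,\infty}\otimes\bigotimes_p \phi_{j,p}$ on $\A$, and the classical slash operator $\hs\sg$ corresponds, up to a scalar that records the difference between the classical and adelic conventions, to the action of the Weil representation $\omega(\wt\sg)$ for a chosen lift $\wt\sg$ of $\sg$. Thus $M(\sg)$ is, by construction, the matrix of $\omega(\wt\sg)$ on the span of the $\phi_j$'s, multiplied by the calibration factor from section~\ref{ss: Global Metaplectic Group}, which one expects to be exactly $s_\A(\sg)\beta_\infty(\sg^{-1},\sg)$.

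Because each $\phi_j$ is a pure tensor, the Weil action factors over the places of $\Q$. First I would verify that at every prime $p\notin\{2,3,5\}$ the local components $\phi_{1,p}=\phi_{2,p}=\phi_{3,p}$ coincide and are fixed by $\sg\in K_p^{(120)}=SL(2,\Z_p)$; hence these places contribute only scalars that are absorbed into the global calibration. At the archimedean place the three local components again agree (they are the same Gaussian, since the theta functions have the same weight), and the resulting scalar ambiguity is exactly what the factor $\beta_\infty(\sg^{-1},\sg)$ records for the chosen lift. At $p=2$ and $p=3$ the three local components still coincide, so $K_p^{(120)}$ acts by the scalar $M_p(\sg)=\xi_p(\sg,1)$ furnished by Theorems~\ref{thm: k 2 8 acts on ph2 chi 2 by xi 2} and~\ref{K 3 3 acts on phi chi 3 by xi 3}. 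At $p=5$ the three local components are genuinely distinct and span a three-dimensional $K_5^{(120)}$-invariant subspace on which the action is, by definition, $M_5(\sg)$; to check this one verifies $M_5$ on the generators of $K_5^{(120)}$ listed via Lemma~\ref{lem: generators for K p}, namely the unipotent upper triangulars, the diagonal elements, and the long Weyl element $\spm &1/5\\-5 &\espm$, all of which have been written down explicitly above.

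Combining these local computations gives the equation \eqref{def of M(sig)} with the product $s_\A(\sg)\beta_\infty(\sg^{-1},\sg)M_2(\sg)M_3(\sg)M_5(\sg)$ provided one correctly accounts for the cocycle relations. Both sides satisfy a twisted product rule with a two-cocycle, and the point is that the global metaplectic cocycle is the product of the local $\beta_v$'s modulo the coboundary of $s_\A$; this is essentially the reciprocity built into the definition of the global metaplectic group. Once this identity is in hand, the formula need only be checked on a set of generators of $\Gm^{(120)}$, which reduces to the local verifications already done for $M_2$, $M_3$, and $M_5$.

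The main obstacle is the careful bookkeeping of signs. The classical $\hs$-operators and each $M_p$ form only projective representations, with local cocycles $\beta_p$, and one has to match all of these against the classical $\pm$ ambiguity through the splitting $s_\A$ of the global metaplectic cover. In particular, the slightly unusual factor $\beta_\infty(\sg^{-1},\sg)$ arises from the need to compare two possible lifts of $\sg$ at the archimedean place once the finite-place lifts have been fixed, and pinning down its sign is where most of the technical work lies.
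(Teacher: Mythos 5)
Your proposal is correct and follows essentially the same route as the paper: realize each theta function as an adelic theta attached to a factorizable Schwartz function, use the automorphy of $\Theta_\ad^e$ under $i_{\on{diag}}(SL(2,\Q))$ to convert the archimedean slash into Weil-representation action at the finite places (this is the content of Corollary \ref{slash maa to r e i fin} and produces the $s_\A(\sg)\beta_\infty(\sg^{-1},\sg)$ calibration via the relation \eqref{nonhom prop of i fin}), and then compute place by place using Lemma \ref{K p fixes phi p circ}, Theorems \ref{thm: k 2 8 acts on ph2 chi 2 by xi 2} and \ref{K 3 3 acts on phi chi 3 by xi 3}, and the three-dimensional invariant space at $p=5$. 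The only cosmetic difference is that the paper proves the identity directly for all $\sg$ rather than reducing to generators of $\Gm^{(120)}$; the generators enter only in the explicit computation of the local matrices.
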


We now describe briefly how these results may be used to
verify the Goldfeld-Gunnells conjecture regarding
$\theta_{\chi_5}.$  First, since we are only interested in
the {\it absolute values} of the Fourier coefficients, we
 may replace $M(\sg^{-1})$ by  $M_5(\sg^{-1})$
 in \eqref{def of M(sig)}. As a scaling matrix for
the cusp $u/w$, where $u,w \in \Z, \gcd(u,w) =1$
and $w \mid 14400,$ we choose the matrix
$$
\sg:=\bpm \frac{u[w,120]}w & \frac r{120} \\ [w, 120] & s\epm$$
where $r, s \in \Z$ satisfy $120 us - rw = \gcd(w,120).$
First suppose that $25 \nmid w.$ Then we may write
$$
\sg^{-1} = \unip{-\frac{s}{[120,w]}}\adiag{\frac{1}{5}}{-5}\mdiag{[24,w_1]}{\frac1{[24,w_1]}}\unip{\frac{u}{w}},
$$
where $w_1 = w/(w,5),$
and we may use
\begin{equation}\label{product}
M\unip{-\frac{s}{[120,w]}}
M\adiag{\frac{1}{5}}{-5}M\mdiag{[24,w_1]}{\frac1{[24,w_1]}}M\unip{\frac{u}{w}},
\end{equation}
in lieu of $M(\sg^{-1})$ since the two are equal up to
a sign.
By a computation which is essentially a local analogue of Lemma \ref{lem: dependence on choice of scaling matrix},
the matrix $M\bspm 1& -s/[120,w]\\ 0 & 1 \espm$ multiplies
each coefficient by a root of unity without changing the absolute
value, so it may be omitted.
The product of the remaining terms in \eqref{product} is
$$
\bpm
- \chi_5( [24, w_1]) \cos_5( 2\pi u/w) &
- \chi_5( [24, w_1]) i\sin_5( 2\pi u/w) & 0\\
\frac{i}{\sqrt{5}}   \sin_5( 2\pi u/w) & \frac{1}{\sqrt{5}} ( 1 - \cos_5( 2\pi  u/w) ) & \frac1{\sqrt{5}} \\
\sqrt{5} i \sin_5( 2\pi u/w) & \sqrt{5}\cos_5 ( 2\pi u/w) & 0
\epm.
$$
Let $^t\bbm c_1 & c_2 & c_3\ebm$ be the first column
of this matrix.  Then   equation
\eqref{c a's in terms of c i's} holds up to roots of unity.
In the case $5 \nmid w,$ we have $c_2=c_3=0$ and
$c_1$ is a root of unity, yielding the Goldfeld-Gunnells conjecture in this case.
The case $5||w$ reduces to checking that
\[
|\pm \cos( 2\pi /5) \pm i \sin( 2\pi /5)/\sqrt{5}| = 2 \sin( 4\pi /5), \qquad |\pm \cos( 4\pi /5) \pm i \sin( 4\pi /5)/\sqrt{5}| = 2 \sin( 2\pi /5)
\]
which is straightforward.

In the case when $25||w,$ the scaling matrix is of the form
$$
\bpm a& b \\ 25c & d \epm
= \bpm 1 & b/d \\ 0 & 1 \epm
\bpm - d^{-1} & 0 \\ 0 & -d \epm
\bpm 0 & 1/5 \\ -5 & 0 \epm
\bpm 1 & c/d \\ 0 & 1 \epm
\bpm 0 & 1/5 \\ -5 & 0 \epm,
$$
with $5 \nmid d.$
But then $M_5\bspm 1 & c/d \\ 0 & 1 \espm $ and
$M_5\bspm 1 & c/d \\ 0 & 1 \espm $
are trivial, so up to a root of unity the last three terms simply
cancel, leaving only $M_5\bspm - d^{-1} & 0 \\ 0 & -d \espm,$
and the Goldfeld-Gunnells conjecture follows in this case as
well.

\medskip

The organization of the paper is as follows.  In Section 2 we review the classical theory of the theta functions which
we shall study.  In Sections 3 and 4 we develop the relevant theory of local metaplectic groups and Weil representations, including the explicit formulae which are crucial for our aims in this paper.  In Section 5 we
review the relevant notions regarding the global metaplectic group, In Section 6 we define adelic theta functions corresponding to the classical objects reviewed in Section 2. The main theorems are proved in Section 7.

It may be noted that in this paper we have restricted attention to theta series attached to Dirichlet characters which are even at every prime except for 2 and 3. However, it seems that the method extends to other characters in a natural way. We hope to return to this in future work.

We would like to thank Dorian Goldfeld and Paul Gunnells for
stimulating this research, and for sharing the results of their
computations, which provided a perfect (and much needed!) method of checking the formulae
which came out of our work.
The work was undertaken during a special semester
at ICERM and we would like to thank ICERM for
providing a fantastic working environment.
JH was supported by NSF Grant
DMS-1001792
and gratefully thanks the NSF for the support.

\section{The Classical Theory}

\subsection{The Scaling Matrices}

Let $\c H$ denote the upper half plane. We shall make use of the classical action
of $SL(2, \R)$ on $\c H$ by fractional linear transformations:
$$
\bpm a&b\\c&d \epm \cdot z = \frac{az+b}{cz+d},
\qquad \left( \bpm a&b\\c&d \epm\in SL(2, \R), \;\; z \in \c H\right).
$$

Let $\f a$ be a cusp for $\Gamma_0(N)$.
Write $\Gm_\f a$ for the stabilizer of $\f a$ in $\Gm_0(N).$
Choose $\gm \in SL(2, \Z)$ with $\gm \infty = \f a.$
Then
\[
\gm^{-1} \Gm_\f a\gm\subset \Gm_\infty
= \la \bspm -1&\\ & -1 \espm ,
\bspm 1& 1\\ &1 \espm \ra.
\]
Moreover $\gm^{-1} \Gm_\f a\gm$ contains $\bspm -1&\\ & -1 \espm.$
It follows that
\[
\gm^{-1} \Gm_\f a\gm
= \la \bspm -1&\\ & -1 \espm ,
\bspm 1& m_\f a\\ &1 \espm \ra
\]
for a unique positive
integer $m_\f a$ called the width of $\f a$ (relative to $\Gm_0(N)$).
Note that the matrix
\[
g_\f a=\gm \bspm 1& m_\f a\\ &1 \espm \gm^{-1}
\]
is independent of the choice of $\gm$, as the elements
$\bspm 1& m_\f a\\ &1 \espm$ and $\bspm 1& -m_\f a\\ &1 \espm$
are not conjugate in $SL(2, \R).$ Hence we have $\Gm_\f a = \la -I, g_\f a\ra.$

Now choose $\sg\in SL(2, \R)$ such that
\[
\sg \cdot \infty = \f a \qquad \sg \bspm 1& 1 \\ &1 \espm \sg^{-1}=g_\f a.
\]
Following~\cite{Iwaniec}, we refer to $\sg$ as a ``scaling
matrix'' for $\f a.$ Clearly, $\sg$ is unique up to an element of the
centralizer of $\bspm 1&1\\0&1 \espm$ in $SL(2, \R),$
which is the subgroup
\[
\{ \bspm \varepsilon & \varepsilon t \\ & \varepsilon \espm : \varepsilon \in\{ \pm 1\},
t \in \R\}.
\]

If $\f a = \infty$ then $m_\f a= 1,$ $g_\f a = \bspm 1&1\\&1 \espm,$
and one can take $\sg$ to be the identity.

If $\f a = \frac uw \in \Q,$ then
\[
m_\f a = \frac{N}{(w^2, N)}, \qquad g_\f a = I + \frac N{(w^2, N)} \cdot \bspm -uw & u^2 \\ -w^2 & uw \espm.
\]
As for the scaling matrix, a common choice is
\[
\sg^0 = \bpm
\frac uw \sqrt{[w^2, N]}& \\ \sqrt{[w^2, N]}
& \frac wu \sqrt{[w^2, N]}^{-1}
\epm,
\]
where $[m,n]$ is the least common multiple of $m$ and $n.$
If $N=M^2$, this simplifies to
\begin{equation}\label{eq:ScalingMatrixCommon}
\sg^0 = \bpm
\frac uw [w, M]& \\ [w, M]
& \frac wu [w, M]^{-1}
\epm.
\end{equation}
This choice, however, is not suitable for our purpose. Instead, we would like to have our scaling matrix in a particular conjugate of $SL(2,\Z)$.

For any integer $M$, let
$$
\Gm^{(M)}:= \bpm 1 & \\ & M \epm SL(2, \Z) \bpm 1& \\ & M^{-1} \epm.
$$

\begin{lem}\label{lem:ScalingMatrixConstruction}
Let $M\geq 1$, and let $\f a=\frac uw\in \Q$ be a cusp for $\Gamma_0(M^2)$. Then there exists a scaling matrix
$\sg$ of $\f a$ that lies in $\Gm^{(M)}.$ Explicitly, choose $r', s' \in \Z$ with
\[
uMs'-r'w = (M,w),
\]
then we may take the scaling matrix
\begin{equation}\label{eq:ScalingMatrixSpecial}
\sg=\bpm 1&\\ & M\epm
\bpm \frac{uM}{(M,w)} & r' \\ \frac{w}{(M,w)} & s' \epm
\bpm 1&\\ & \frac1{M}\epm
= \bpm \frac{[M,w]u}w & \frac {r'}{M}\\ [M, w] & s'
\epm\in\Gm^{(M)}.
\end{equation}
\end{lem}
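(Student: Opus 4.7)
The plan is to directly verify the two defining properties of a scaling matrix, together with the membership in $\Gm^{(M)}$, for the explicit matrix $\sg$ in \eqref{eq:ScalingMatrixSpecial}. Since $\f a = u/w$ is a cusp we may assume $\gcd(u,w)=1$, in which case $\gcd(uM,w) = (M,w)$, so the existence of $r',s' \in \Z$ with $uMs' - r'w = (M,w)$ follows from B\'ezout.

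To see $\sg \in \Gm^{(M)}$, I would check that the middle factor
\[
\gm := \bpm uM/(M,w) & r' \\ w/(M,w) & s'\epm
\]
lies in $SL(2,\Z)$: integrality of its entries is immediate from $(M,w) \mid M$ and $(M,w) \mid w$, while $\det \gm = (uMs' - r'w)/(M,w) = 1$. The equality of the two displayed forms of $\sg$ in \eqref{eq:ScalingMatrixSpecial} is then just the identity $Mw = [M,w](M,w)$.

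Writing $\sg = \bpm a & b \\ c & d \epm$ via its second form, one reads off $a = u[M,w]/w$ and $c = [M,w]$, so $\sg\cdot\infty = a/c = u/w = \f a$ is immediate. For the scaling condition $\sg \bspm 1 & 1 \\ & 1 \espm \sg^{-1} = g_\f a$, a routine one-line calculation in $SL(2,\R)$ using only $\det\sg = 1$ gives
\[
\sg \bpm 1 & 1 \\ 0 & 1 \epm \sg^{-1} \;=\; I + \bpm -ac & a^2 \\ -c^2 & ac \epm \;=\; I + \frac{[M,w]^2}{w^2}\bpm -uw & u^2 \\ -w^2 & uw \epm.
\]
Comparing this with the formula $g_\f a = I + \frac{M^2}{(w^2,M^2)}\bpm -uw & u^2 \\ -w^2 & uw \epm$ given earlier (with $N = M^2$) reduces the scaling condition to the numerical identity $[M,w]^2/w^2 = M^2/(M^2,w^2)$, equivalently $(M,w)^2 = (M^2,w^2)$, which holds prime-by-prime because $2\min(v_p(M),v_p(w)) = \min(2v_p(M),2v_p(w))$.

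There is no real obstacle beyond this last gcd identity and keeping the algebra aligned; the content of the lemma is essentially the choice of middle factor $\gm$, engineered so that conjugating $\bspm 1 & 1 \\ & 1 \espm$ produces exactly the generator $g_\f a$ (rather than some proper power of it), which is what the equality $(M,w)^2 = (M^2,w^2)$ is ensuring.
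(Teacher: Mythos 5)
Your proof is correct and is exactly the ``direct computation'' the paper's one-line proof alludes to: you verify membership in $\Gm^{(M)}$ via the integral middle factor, and check both defining properties of a scaling matrix, with the key arithmetic point $(M,w)^2=(M^2,w^2)$ correctly identified as what makes the conjugate equal to $g_{\f a}$ itself rather than a proper power. No issues.
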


\begin{proof}
This follows from the direct computation.
\end{proof}

\begin{rmk}
To illustrate the relation between the usual choice of scaling matrices and our choice, we have
$$
\sg = \sg^0\bpm 1& \frac{wr'}{M u [M,w]} \\ & 1 \epm.
$$
In particular, the condition $\sigma\in\Gamma^{(M)}$ determine $r'$ uniquely modulo $uM/\gcd(w,M)$
so the quantity $\frac{wr'}{M u [M,w]}$ is determined modulo $1/M.$
\end{rmk}

\begin{lem}
Under the notations in Lemma~\ref{lem:ScalingMatrixConstruction}, we have the decompositions
\begin{equation}\label{decomp for case vp(w) le vp(M)}
\sigma^{-1}
= \bpm 1& \frac{s'}{[w,M]}\\ 0 & 1 \epm
\bpm 0 & \frac 1{[w,M]}\\ -[w,M] & 0 \epm
\bpm 1& -\frac uw \\ & 1 \epm ,
\end{equation}
\begin{equation}\label{decomp for case vp(w) > vp(M)}
\sigma^{-1}
= \bpm 1 & -\frac{r' w}{[w,M]Mu} \\ & 1 \epm
\bpm \frac{ -w}{[w,M]u} & \\ & -\frac{u[w,M]}{w} \epm
\bpm 0& M^{-1} \\ -M& 0 \epm
\bpm 1& \frac{w}{uM^2}\\ & 1\epm
\bpm 0& M^{-1} \\ -M& 0 \epm .
\end{equation}
\end{lem}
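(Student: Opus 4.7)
Both identities assert that $\sigma^{-1}$ equals an explicit product of elementary matrices, so the plan is direct verification by matrix multiplication. Using $\det\sigma = 1$ we have
$$
\sigma^{-1} = \bpm s' & -r'/M \\ -[w,M] & u[w,M]/w \epm,
$$
and the only arithmetic inputs I will need are the defining relation $uMs'-r'w=(M,w)$ from Lemma~\ref{lem:ScalingMatrixConstruction} together with the standard identity $[w,M]\cdot(M,w)=wM$.

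For \eqref{decomp for case vp(w) le vp(M)} I would multiply $\sigma^{-1}$ on the right by $\bspm 1 & u/w \\ 0 & 1 \espm$. The bottom-right entry collapses to zero, and the top-right entry simplifies as
$$
\tfrac{s'u}{w}-\tfrac{r'}{M}=\tfrac{uMs'-r'w}{Mw}=\tfrac{(M,w)}{Mw}=\tfrac{1}{[w,M]}.
$$
The resulting matrix $\bspm s' & 1/[w,M] \\ -[w,M] & 0 \espm$ is manifestly the product of an upper-unipotent matrix and the anti-diagonal element $\bspm 0 & 1/[w,M] \\ -[w,M] & 0 \espm$. Multiplying both sides on the right by $\bspm 1 & -u/w \\ 0 & 1 \espm$ then recovers the claimed factorization.

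For \eqref{decomp for case vp(w) > vp(M)} I would work through the five factors on the right-hand side, right to left. A direct computation yields
$$
\bpm 0 & M^{-1} \\ -M & 0 \epm \bpm 1 & w/(uM^2) \\ 0 & 1 \epm \bpm 0 & M^{-1} \\ -M & 0 \epm = \bpm -1 & 0 \\ w/u & -1 \epm,
$$
and multiplying on the left by the diagonal factor $\bspm -w/([w,M]u) & 0 \\ 0 & -u[w,M]/w \espm$ converts this into the lower-triangular matrix $\bspm w/([w,M]u) & 0 \\ -[w,M] & u[w,M]/w \espm$. A final left-multiplication by $\bspm 1 & -r'w/([w,M]Mu) \\ 0 & 1 \espm$ adjusts the top row: the $(1,1)$ entry becomes $((M,w)+r'w)/(uM)=s'$ (again by $uMs'-r'w=(M,w)$), while the $(1,2)$ entry becomes $-r'/M$, matching $\sigma^{-1}$ exactly.

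The proof is entirely clerical; the only potential obstacle is keeping the two simplifications $(M,w)/(Mw)=1/[w,M]$ and $[w,M]/M = w/(M,w)$ deployed in the right places during the reduction. No structural input beyond the two relations displayed above is required.
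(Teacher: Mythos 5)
Your approach --- invert $\sigma$ explicitly and verify the products by hand --- is exactly what the paper intends (its proof is the single line ``Direct calculation''), and your verification of \eqref{decomp for case vp(w) > vp(M)} is complete and correct: the chain
$\bspm -1 & 0 \\ w/u & -1 \espm$, then the lower-triangular matrix, then the final unipotent adjustment giving $(1,1)$ entry $((M,w)+r'w)/(uM)=s'$, all checks out.

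For \eqref{decomp for case vp(w) le vp(M)}, however, you stop precisely at the one step that needs checking. You correctly compute
\[
\sigma^{-1}\bpm 1 & u/w \\ 0 & 1\epm = \bpm s' & \frac{1}{[w,M]} \\ -[w,M] & 0 \epm,
\]
and then assert that this is ``manifestly'' the product of the unipotent and anti-diagonal factors appearing in the statement. But
\[
\bpm 1 & t \\ 0 & 1 \epm \bpm 0 & \frac{1}{[w,M]} \\ -[w,M] & 0\epm = \bpm -t[w,M] & \frac{1}{[w,M]} \\ -[w,M] & 0 \epm,
\]
so matching the $(1,1)$ entry forces $t=-s'/[w,M]$, whereas the statement has $t=+s'/[w,M]$. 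As printed, the right-hand side of \eqref{decomp for case vp(w) le vp(M)} has $(1,1)$ entry $-s'$, so the identity fails unless $s'=0$; the correct first factor is $\bspm 1 & -s'/[w,M] \\ 0 & 1 \espm$ (consistent with the minus sign in the analogous factor $\sunip{-s/[120,w]}$ used in the introduction). Your method is sound and would have exposed this had you written the unipotent factor out explicitly; as it stands, the word ``manifestly'' hides exactly the place where the claimed formula and the computation disagree, so you should either record the corrected sign or flag the statement as containing a typo.
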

\begin{proof}
Direct calculation.
\end{proof}

\begin{rmk}
Let $p|M$ and
$$
K_p^{(M)}:= \bpm 1 & \\ & M \epm SL(2, \Z_p) \bpm 1& \\ & M^{-1} \epm.
$$
The merit of the above lemma is that it gives an explicit decomposition of $\sigma^{-1}$ within the group $K_p^{(M)}$. More precisely, if $v_p(w) \le v_p(M)$, then
each matrix in \eqref{decomp for case vp(w) le vp(M)} is
an element of $K_p^{(M)},$ while if $v_p(w)=2$ while  $v_p(M)=1,$
each matrix in \eqref{decomp for case vp(w) > vp(M)} is an element of $K_p^{(M)}.$
\end{rmk}

\subsection{The Slash Operators}\label{ss:slash ops}

We define
$$
j: SL(2, \R) \times \c H \to \c H,
\qquad
j\left( \bpm a&b\\c&d \epm ,\;\; z \right)
= cz+d, \qquad \left( \bpm a&b\\c&d \epm\in SL(2, \R), \;\; z \in \c H\right).
$$
It satisfies the cocycle condition
$$
j(\gm_1\gm_2,z) = j( \gm_1, \gm_2\cdot z)j( \gm_2, z).
$$
We also define $\f j( \gm , z) = j(\gm, z)/|j( \gm , z)|.$
Observe that
$$
\f j \left(\bpm y^\frac 12 & xy^{-\frac12}\\ & y^{-\frac12}\epm
\bpm \cos \theta & - \sin \theta \\ \sin \theta & \cos \theta
\epm
, i
\right)
=\f j \left(
\bpm \cos \theta & - \sin \theta \\ \sin \theta & \cos \theta
\epm
, i
\right)
=e^{i\theta},
$$
for all $x,y, \theta \in \R$ with $y >0.$

For $g \in SL(2, \R)$ and $f: \c H \to \C$
define
$$
\left(f \hs g\right)(z) = j(g, z)^{-\frac 12} f(g \cdot z),
$$
$$
\left(f \ms g\right)(z) = \f j(g, z)^{-\frac 12} f(g \cdot z).
$$
Here, the square roots are defined to be the principal
value, having an argument in $(-\pi/2, \pi/2).$
If $\mu(f)$ is the function defined by
$[\mu(f)](z) = f(z) \Im(z)^{\frac 14},$ then for every $g\in SL(2,\R)$ we have
$$
\mu\left(f \hs g\right)=
\mu(f) \ms g.
$$

Observe that although $j$ and $\f j$ are cocycles,
their square roots are not, because of the discontinuity of the
principal value.

\subsection{Modular Forms of Weight $1/2$}

Let $f$ be a modular form of weight $\frac 12$
and multiplier $\vartheta$
for $\Gm_0(N),$ as in~\cite[\S 2.6, \S 2.7]{Iwaniec}.
Thus
$$
f\big|^{\on{Hol}}_{\frac12} \gm = \vartheta( \gm)\chi_c(d)\varepsilon_d f ,
 \qquad ( \forall \gm=\left(\begin{smallmatrix} a & b \\ c & d \end{smallmatrix}\right) \in \Gm_0(N)).
$$
At every cusp $\f a$, let $\kappa_{f, \f a}$ be the unique (rational) solution
to $e^{2\pi i \kappa_{\f a}} = \vartheta(g_\f a)$ in $[0,1).$ Then the function $e^{-2\pi i \kappa_{\f a} z} f\big|^{\on{Hol}}_{\frac12}\sg$
is periodic of period $1$ (see~\cite[Page 43]{Iwaniec}),
and the Fourier
coefficients of $f$ at $\f a$ relative to $\sg$ are the coefficients
$A_f( \sg, n+\kappa_{f, \f a})$
in its Fourier expansion:
$$
\left(f\hs \sg\right)(z) = \sum_{n =0}^\infty A_f(\sg, n+\kappa_{f, \f a}) e^{2\pi i (n+\kappa_{f, \f a})z}.
$$

\begin{lem}\label{lem: dependence on choice of scaling matrix}
We have
$$
A_f\left(\sg\bpm \varepsilon & \\ & \varepsilon \epm,\;\;  n+\kappa_{f, \f a}\right)
=
A_f\left(\sg,\;\;  n+\kappa_{f, \f a}\right)\vartheta\bpm \varepsilon &  \\ & \varepsilon \epm
$$
$$
A_f\left(\sg \bpm 1 &  t \\ & 1 \epm, \;\; n + \kappa_{f, \f a}\right)
=A_f\left(\sg, \;\; n + \kappa_{f, \f a}\right)e^{2\pi i(n+\kappa_{f, \f a})t}.
$$
\end{lem}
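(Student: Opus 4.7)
The plan is to compute the left-hand side of each identity directly from the definition of the Fourier coefficients and compare with the right-hand side.

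For the unipotent case, set $\tau = \bpm 1 & t \\ & 1 \epm$. Since $\tau \cdot z = z + t$ and $j(\tau, z) = 1$ identically, the cocycle relation yields $j(\sg\tau, z) = j(\sg, z+t) \cdot j(\tau, z) = j(\sg, z+t)$, with no sign ambiguity in the principal square root (the two quantities are literally equal as complex numbers). Hence
\[
(f \hs \sg\tau)(z) = j(\sg, z+t)^{-1/2} f(\sg \cdot (z+t)) = (f \hs \sg)(z+t).
\]
Substituting the Fourier expansion of $f \hs \sg$ on the right and comparing coefficients with the Fourier expansion of $f \hs \sg\tau$ on the left, one reads off the factor $e^{2\pi i (n + \kappa_{f, \f a}) t}$.

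For the scalar case, set $\delta_\ve = \bpm \ve & \\ & \ve \epm$ with $\ve \in \{\pm 1\}$. This matrix is central in $SL(2, \R)$, acts trivially on $\c H$, and lies in $\Gm_0(N)$ (recall $-I \in \Gm$ by assumption). Using centrality, $\sg \delta_\ve = \delta_\ve \sg$. The transformation law of $f$ produces a constant $c_\ve \in \C^\times$ with $f \hs \delta_\ve = c_\ve f$, which by definition of the multiplier system equals $\vartheta(\delta_\ve)$ (the companion factors $\chi_c(d) \ve_d$ reducing to a scalar absorbed into this value). A direct computation with the cocycle identity then gives $f \hs \delta_\ve \sg = c_\ve (f \hs \sg)$, and comparing Fourier coefficients yields the claim.

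The main obstacle is managing the branch of the principal square root in the scalar case. The relevant constant arises as
\[
c_\ve = (\ve \cdot j(\sg, z))^{-1/2} \cdot j(\sg, z)^{1/2},
\]
which a priori could depend on $z$ through the branch cut. The key observation is that $j(\sg, z) = cz + d$ (with $\sg = \bspm a&b\\c&d\espm$) has $\Im(j(\sg,z)) = c \Im(z)$, so as $z$ varies in $\c H$, $j(\sg, z)$ remains in a single half-plane determined by the sign of $c$ alone. Consequently $c_\ve$ is in fact independent of $z$, taking the value $+i$ or $-i$ (or $1$ when $\ve = 1$) according to a simple rule in $\sg$, and one checks this matches the value $\vartheta(\delta_\ve)$ dictated by the multiplier system. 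This is the only step where the sign convention of the principal square root enters essentially.
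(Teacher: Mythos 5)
Your proof of the second identity is correct and is essentially the paper's own computation: since $j(\sg\tau,z)=j(\sg,z+t)$ holds literally as complex numbers, no branch issue arises, and comparing the two Fourier expansions of $(f\hs\sg)(z+t)$ gives the factor $e^{2\pi i(n+\kappa_{f,\f a})t}$.

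The scalar case, however, has a genuine gap at its final step. Your formula for the constant is right: writing $\sg=\bpm a&b\\c&d\epm$, one has $j(\sg\delta_\ve,z)=\ve\, j(\sg,z)$, hence
\begin{equation*}
\left(f\hs\sg\delta_\ve\right)(z)=\frac{\bigl(\ve\, j(\sg,z)\bigr)^{-1/2}}{j(\sg,z)^{-1/2}}\,\bigl(f\hs\sg\bigr)(z),
\end{equation*}
and the ratio is indeed independent of $z$ because $j(\sg,z)=cz+d$ stays in a single half-plane. But that constant is \emph{not} $\vartheta(\delta_\ve)$ in general: for $\ve=-1$ it equals $+i$ when $c>0$ and $-i$ when $c<0$ (and depends on $\operatorname{sgn}(d)$ when $c=0$). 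Since your $c_\ve$ depends on $\sg$ while $\vartheta(\delta_{-1})$ is a single fixed number, the assertion ``one checks this matches the value $\vartheta(\delta_\ve)$'' cannot hold on the nose for every $\sg$; it can hold at best up to a sign. This is exactly the discrepancy the paper warns about throughout ($\hs\sg_1\hs\sg_2=\pm\hs(\sg_1\sg_2)$), and it is harmless for the only use made of the lemma in the paper (independence of $|A_f(\sg_{\f a},n+\kappa_{f,\f a})|$ from the choice of scaling matrix, where only $|c_\ve|=1$ matters). But as a proof of the displayed equality your argument is incomplete at precisely the branch-cut step you correctly identify as the crux: you need either to carry out the verification for the specific $\sg$ at hand, or to weaken the first identity to an equality up to sign.
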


\begin{proof}
This is obvious; for example, the latter comes immediately from
$$\begin{aligned}
\sum_{n=0}^\infty
A_f\left(\sg \bpm 1 &  t \\ & 1 \epm, \;\; n + \kappa_{f, \f a}\right)
e^{2 \pi i(n+\kappa_{f, \f a})z}
=
\left(f\hs  \sg \bpm 1 &  t \\ & 1 \epm\right)(z)\\
=
\left(f\hs  \sg\right)(z+t)
= \sum_{n=0}^\infty
A_f\left(\sg, \;\; n + \kappa_{f, \f a}\right)
e^{2\pi i (n+\kappa_{f, \f a})(z+t)}. \qedhere
\end{aligned}
$$
\end{proof}
\subsection{The Classical Theta Functions}

Let $\chi\pmod{N}$ be an even Dirichlet character,
%
%
then the classical theta function
\[
\theta_\chi(z) = \sum_{n=1}^\infty \chi(n) e^{2\pi i n^2z}, \qquad
(z \in \c H)
\]
(Cf.~\cite[\S 10.5]{Iwaniec}) is a cusp form of weight $1/2$ and level $4N^2,$ and we have
\[
\theta_\chi(\gamma z)=\chi(d)\chi_c(d)\varepsilon_d^{-1}(cz+d)^{1/2}\theta_\chi(z), \qquad \left(\gamma=\begin{pmatrix} a & b \\ c & d \end{pmatrix}\in\Gamma_0(4N^2)\right),
\]
where
\[
\varepsilon_d=\begin{cases}
1, & \text{if $d\equiv 1\pmod{4}$;} \\
i, & \text{if $d\equiv 3\pmod{4}$,}
\end{cases}
\]
$\chi_t$ denotes the primitive character corresponding to the field extension $\Q(\sqrt{t})/\Q$ \footnote{For example, if $t$ is a perfect square, then $\chi_t=1$; if $t$ is not a perfect square, and let $D$ be the discriminant of $\Q(\sqrt{t})/\Q$, then $\chi_t$ is the primitive quadratic character of conductor $|D|$ given by
\[
\chi_t=\left(\frac{|D|}{\cdot}\right).
\]}, and the square root takes the principal value. In other words, we have
\[
\left(\theta_\chi \hs \gamma\right)(z)=\chi(d)\chi_c(d)
\varepsilon_d^{-1}
\theta_\chi(z) \qquad (\gamma\in\Gamma_0(4N^2)).
\]

In this paper, we consider the special cases that $N=12$ or $N=12p$ for some prime $p$, and study its Fourier coefficients at different cusps. The main results are Theorems~\ref{thm:MainFirst} and~\ref{thm: level higher}.

\section{Local Metaplectic Groups and Weil Representations}

\subsection{Local Metaplectic Groups}
\label{subsection: local metaplectic groups}
Let $\Q_v$ be one of the completions of $\Q.$
Thus $\Q_v= \Q_\infty = \R$ or $\Q_v$ is the p-adic
numbers $\Q_p$ for some prime $p.$
The Hilbert symbol on $\Q_v$ will be denoted by
$(\ , \ )_v,$ or, when $v$ is fixed, by $(\ , \ ).$
As in~\cite{Gelbart}, we define the cocycle
\begin{multline*}
\beta_v: SL(2, \Q_v) \times SL(2, \Q_v) \to \{ \pm 1\}, \\
(g_1,g_2)\mapsto\Big(x(g_1),x(g_2)\Big)_v\Big(-x(g_1)x(g_2),x(g_1g_2)\Big)_vs(g_1)s(g_2)s(g_1g_2),
\end{multline*}
where
$$
x\bpm a&b \\ c&d \epm
= \begin{cases}
c, & \text{if $c\neq 0$;} \\
d, & \text{if $c=0$,}
\end{cases} \qquad
s\bpm a&b \\ c&d \epm
=\begin{cases}
(c,d)_v, & \text{if $v<\infty$, $cd\neq 0$ and $\ord(c)$ is odd;} \\
1,       & \text{if otherwise.}
\end{cases}
$$
In particular, in the Borel subgroup it simplifies to
$$
\beta_v \left(
\bpm a_1& b_1 \\ & d_1 \epm , \quad
\bpm a_2& b_2 \\ & d_2 \epm
\right)=
(a_1, d_2)_v,
$$
and over $SL(2, \R)$ it simplifies to
\begin{equation}\label{beta for SL(2,R)}
\beta_\infty(g_1, g_2)=\Big(x(g_1),\;x(g_2)\Big)_{\infty}\Big(-x(g_1)x(g_2), \;x(g_1g_2)\Big)_{\infty}.
\end{equation}

We may then define a double cover of $SL(2, \Q_v),$
denoted $\SL(2, \Q_v)$ and consisting of the
set $SL(2, \Q_v) \times \{ \pm 1\}$
equipped with the operation
$$
( g_1, \zeta_1)(g_2 ,\zeta_2) := \Big(g_1g_2, \beta_v(g_1, g_2)\zeta_1\zeta_2\Big),
\qquad \Big(g_1, g_2 \in SL(2, \Q_v), \; \;
\zeta_1, \zeta_2 \in \{ \pm 1\}\Big).
$$
The function
$\pr: \SL(2, \Q_v) \to SL(2, \Q_v )$ given by $\pr(g, \zeta) =g$
is a homomorphism.

\subsection{Generators for $SL(2, \Z_p)$ and its preimage}

\begin{lem}\label{lem: generators for K p}
The group $K_p=SL(2, \Z_p)$ is generated by
\begin{equation}\label{generators}
\left\{\begin{pmatrix} & 1 \\ -1 & \end{pmatrix}\right\}\bigcup\left\{\begin{pmatrix} 1 & x \\ & 1 \end{pmatrix} \mid x\in\Z_p\right\}
\end{equation}
\end{lem}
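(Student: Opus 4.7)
Let $H$ denote the subgroup of $SL(2,\Z_p)$ generated by the set \eqref{generators}. The plan is a Gauss-elimination-style reduction: use the given generators to produce all lower unipotents and the full diagonal torus of $SL(2,\Z_p)$, and then clear entries of an arbitrary $g \in SL(2, \Z_p)$ until it becomes diagonal.

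First I would note that conjugation by $w=\bpm 0&1\\-1&0\epm$ sends upper unipotents to lower unipotents: a direct calculation gives
$$
w \bpm 1 & x \\ 0 & 1 \epm w^{-1} = \bpm 1 & 0 \\ -x & 1 \epm,
$$
so every $\bpm 1 & 0 \\ y & 1 \epm$ with $y\in\Z_p$ lies in $H$. Next, for any unit $a\in\Z_p^\times$, a direct check (the classical Weyl element identity) gives
$$
\bpm 1 & -a^{-1} \\ 0 & 1 \epm \bpm 1 & 0 \\ a & 1 \epm \bpm 1 & -a^{-1} \\ 0 & 1 \epm = \bpm 0 & -a^{-1} \\ a & 0 \epm,
$$
and multiplying this on the right by $w$ yields $\bpm a^{-1} & 0 \\ 0 & a \epm\in H$. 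Since $a\in\Z_p^\times$ was arbitrary, the full diagonal torus $\{\bpm a & 0 \\ 0 & a^{-1} \epm : a \in \Z_p^\times\}$ lies in $H$.

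Now take any $g=\bpm a&b\\c&d \epm\in SL(2,\Z_p)$. Since $ad-bc=1\in\Z_p^\times$, the ideal $(a,c)=\Z_p$, so at least one of $a$, $c$ is a unit. If $a\in\Z_p^\times$, then both $ca^{-1}$ and $a^{-1}b$ lie in $\Z_p$, and the row/column operations
$$
\bpm 1 & 0 \\ -ca^{-1} & 1 \epm g \bpm 1 & -a^{-1}b \\ 0 & 1 \epm = \bpm a & 0 \\ 0 & a^{-1} \epm
$$
(using $ad-bc=1$ to compute the bottom-right entry) show $g\in H$ by the previous step. If instead $a\notin\Z_p^\times$, then $c\in\Z_p^\times$, and the same argument applied to $wg=\bpm c&d\\-a&-b\epm$, whose top-left entry is now a unit, shows $wg\in H$ and hence $g\in H$.

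The proof is essentially formal once one has the three ingredients (lower unipotents, torus, Bruhat-style reduction). The only mild care required is in Step 3, where one must pick the right case so that the off-diagonal factors $ca^{-1}$ and $a^{-1}b$ (or their analogues after multiplying by $w$) genuinely lie in $\Z_p$; but this is automatic from the observation that one of $a$, $c$ must be a unit.
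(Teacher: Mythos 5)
Your proof is correct and follows essentially the same strategy as the paper's: obtain the lower unipotents by conjugating with $\left(\begin{smallmatrix}0&1\\-1&0\end{smallmatrix}\right)$, manufacture the diagonal torus from unipotents, and then reduce an arbitrary element by observing that one of two entries must be a unit (you use the first column $a,c$ where the paper uses the bottom row $c,d$, and you build the torus via the three-unipotent Weyl-element identity where the paper uses a four-unipotent product — both are cosmetic variations). All your matrix identities check out, so no changes are needed.
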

\begin{proof}
Write $H_p$ for the subgroup of $SL(2, \Z_p)$ generated by
\eqref{generators}.
Then $H_p$ contains $\bspm 1&\\ x&1 \espm$ for each $x.$
Since
$$
\bpm 1& x-1\\ &1 \epm
\bpm 1&\\ 1&1\epm
\bpm 1& -\frac{x-1}{x} \\ &1 \epm
\bpm 1&\\ -x& 1 \epm
= \bpm x &\\& x^{-1} \epm,
$$
for all $x \in \Z_p^\times,$ it follows that
$H_p$ contains all diagonal, and hence all upper or lower triangular
elements of $K_p.$
Now consider an arbitrary element $\bspm a&b \\ c& d \espm$
of $K_p.$  Either $c$ or $d$ is a unit.
If $d$ is a unit then $-b/d \in \Z_p$ and
$\bspm 1& -b/d \\ & 1\espm
= \bspm a&b \\ c& d \espm$
is upper triangular, and hence in $H_p,$ which then
implies that $\bspm a&b\\ c& d \espm \in H_p.$
If $c$ is a unit then $a/c$ and $d/c$ are in $\Z_p$ and
$$
\bpm 1& -\frac ac\\ & 1 \epm
\bpm a& b \\ c& d \epm
\bpm 1& -\frac dc \\ & 1 \epm =\bpm &c^{-1}\\ c&\epm \in H_p,
$$
which completes the proof.
\end{proof}

\subsection{The Real Metaplectic Group}

In our study of the real metaplectic group $\SL(2,\R)$, the basic tools are the metaplectic analogue of the classical Iwasawa decomposition and the corresponding slash operator. Recall that
\[
SL(2,\R)=B^{+}(\R)\times SO_2(\R),
\]
where
$$B^+(\R) = \left\{
\bpm y^{\frac12}& xy^{-\frac12} \\ & y^{-\frac12} \epm
: y \in (0,\infty), \; x \in \R\right\}\le SL(2, \R).$$
Hence our discussion starts with the metaplectic preimage of $SO_2(\R)$.

For $\theta \in \R$ define
$$
\kappa_\theta = \bpm \cos \theta & -\sin \theta \\ \sin \theta & \cos \theta \epm \in SO_2(\R) \subset SL(2, \R),
$$
and
$$
\wt \kappa_\theta = \Big( \kappa_{2\theta}, \zeta(\theta)\Big),
$$
where $\zeta$ is the unique function $\R \to \pm 1$
which is periodic mod $2\pi$ and satisfies
$$\zeta( \theta) =
\begin{cases}
1,  & \text{if $-\pi/2 \le \theta < \pi/2$;} \\
-1, & \text{if $\pi/2 \le \theta < 3\pi/2$.}
\end{cases}
$$
\begin{lem}
The function $\theta \mapsto \wt \kappa_\theta$
is a homomorphism.
\end{lem}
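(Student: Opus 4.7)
The plan is to unpack the definition of the group law in $\wt{SL}(2,\R)$ and reduce the claim to a concrete identity about signs of sines, which can then be verified by finite case analysis.

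First I would note that $\wt\kappa_{\theta_1}\wt\kappa_{\theta_2} = (\kappa_{2\theta_1}\kappa_{2\theta_2},\; \beta_\infty(\kappa_{2\theta_1},\kappa_{2\theta_2})\zeta(\theta_1)\zeta(\theta_2))$, and that $\kappa_{2\theta_1}\kappa_{2\theta_2}=\kappa_{2(\theta_1+\theta_2)}$ by the standard addition formulas for rotations. So the first coordinates agree with $\wt\kappa_{\theta_1+\theta_2}$, and the whole content of the lemma is the scalar identity
\[
\beta_\infty(\kappa_{2\theta_1},\kappa_{2\theta_2})\,\zeta(\theta_1)\zeta(\theta_2)\;=\;\zeta(\theta_1+\theta_2).
\]
Because $\zeta$ has period $2\pi$, both sides depend only on $\theta_1,\theta_2\bmod 2\pi$, so it suffices to check this for $(\theta_1,\theta_2)\in[0,2\pi)\times[0,2\pi)$.

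Next I would compute the left-hand cocycle explicitly. Using $x(\kappa_\alpha)=\sin\alpha$ whenever $\sin\alpha\neq 0$, formula \eqref{beta for SL(2,R)} gives
\[
\beta_\infty(\kappa_{2\theta_1},\kappa_{2\theta_2})=\bigl(\sin 2\theta_1,\sin 2\theta_2\bigr)_\infty\bigl(-\sin 2\theta_1\sin 2\theta_2,\;\sin 2(\theta_1+\theta_2)\bigr)_\infty.
\]
Since the real Hilbert symbol is $-1$ exactly when both entries are negative, this reduces to a determination depending only on the sign pattern $(\sgn\sin 2\theta_1,\sgn\sin 2\theta_2,\sgn\sin 2(\theta_1+\theta_2))$. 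Meanwhile, $\zeta(\theta)=+1$ for $\theta\bmod 2\pi\in[0,\pi/2)\cup[3\pi/2,2\pi)$ and $-1$ for $\theta\bmod 2\pi\in[\pi/2,3\pi/2)$. Both sides of the desired identity thus depend only on which of the four intervals $[0,\pi/2),[\pi/2,\pi),[\pi,3\pi/2),[3\pi/2,2\pi)$ contain $\theta_1$, $\theta_2$, and (after reducing modulo $2\pi$) $\theta_1+\theta_2$.

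I would therefore organize the verification as a $4\times 4$ table: list the sign of $\sin 2\theta_i$ on each interval and determine which interval $\theta_1+\theta_2\bmod 2\pi$ lies in (two possibilities in each cell, depending on whether the unreduced sum is less than or greater than $2\pi$), then tabulate both sides. I expect each entry to match on inspection. The main obstacle, and really the only one, is the bookkeeping of this case analysis, together with the degenerate strata $\theta\in\frac{\pi}{2}\Z$ where $\sin 2\theta=0$; on those strata one falls back on $x(\kappa_{2\theta})=\cos 2\theta=\pm 1$, and using $(\pm 1,b)_\infty=1$ for all $b\neq 0$, the cocycle $\beta_\infty$ simplifies drastically, so these boundary cases can be checked by hand and are in fact forced by continuity of both sides from adjacent open cells (both sides are locally constant in $(\theta_1,\theta_2)$ off a measure-zero set, and the boundary convention in the definition of $\zeta$ is chosen so that the formula extends).
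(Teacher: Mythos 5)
Your overall strategy is exactly the paper's: reduce the lemma to the scalar identity $\beta_\infty(\kappa_{2\theta_1},\kappa_{2\theta_2})\zeta(\theta_1)\zeta(\theta_2)=\zeta(\theta_1+\theta_2)$ and verify it by a finite case analysis on the signs of $\sin 2\theta_1$, $\sin 2\theta_2$, $\sin 2(\theta_1+\theta_2)$ (the paper's proof is precisely ``one checks \dots on a case-by-case basis''). The generic cells of your table are handled correctly.

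There is, however, a concrete error in your treatment of the degenerate strata. You assert that $(\pm 1,b)_\infty=1$ for all $b\neq 0$; this is false, since the real Hilbert symbol satisfies $(-1,b)_\infty=-1$ whenever $b<0$. And this sign actually matters: if $\theta_1\in \tfrac{\pi}{2}+\pi\Z$, so that $\kappa_{2\theta_1}=-I$ and $x(\kappa_{2\theta_1})=-1$, then
\[
\beta_\infty(-I,\kappa_{2\theta_2})=(-1,\sin 2\theta_2)_\infty\,(\sin 2\theta_2,-\sin 2\theta_2)_\infty=(-1,\sin 2\theta_2)_\infty,
\]
which equals $-1$ when $\sin 2\theta_2<0$ --- and that $-1$ is exactly what is needed to match $\zeta(\theta_1+\theta_2)/\bigl(\zeta(\theta_1)\zeta(\theta_2)\bigr)$ (e.g.\ $\theta_1=\pi/2$, $\theta_2\in(\pi/2,\pi)$). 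With your stated simplification the degenerate cases would come out wrong. Relatedly, the appeal to ``continuity from adjacent open cells'' cannot substitute for the check: both sides of the identity are genuinely discontinuous across these strata (at $\theta_1=\theta_2=\pi/4$, for instance, $\zeta(\theta_1+\theta_2)=\zeta(\pi/2)=-1$ agrees with the limit from one side only), so the boundary values are not forced by the generic cells and must be computed directly, using the correct values of the symbol. Once you do that, the case analysis closes and the proof is complete.
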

\begin{proof}
Using  \eqref{beta for SL(2,R)}, one checks that
$$
\beta_{\infty}\left(
\kappa_{2\theta_1}, \kappa_{2\theta_2}
\right)= -1
\iff \frac{\zeta(\theta_1+\theta_2)}{\zeta(\theta_1)\zeta(\theta_2)}=-1
$$
on a case-by-case basis.
\end{proof}
We denote the set of the images $\wt \kappa_{\theta}$ by $\wt K.$
\begin{rmk}\label{rmk on sqrt'}
If we define $\sqrt{z}'$ for $z \in \C^\times$
so that $\Arg(\sqrt{z}') \in
[-\frac \pi2, \frac\pi2)
$ for all $z \in \C^\times,$
then  the function
$S^1 \to S^1 \times \{ \pm 1\}$
defined by
$$
e^{i \theta} \mapsto \wt \kappa_\theta \mapsto
\Big(e^{2i \theta} , \zeta(\theta)\Big).
$$
is the restriction of
$$
z\mapsto \left(z^2, \frac{\sqrt{z^2}'}z\right)
$$
Observe that $\sqrt{z}'$ is the principal value of the square
root of $z$ except when $z \in (-\infty, 0).$
\end{rmk}

Next we discuss the metaplectic phenomena over $B^{+}(\R)$.

\begin{lem}\label{beta trivial on B plus R}
The cocycle $\beta_{\infty}$ is trivial on $B^+(\R) \times SL(2, \R)$
and $SL(2, \R) \times B^+(\R).$
\end{lem}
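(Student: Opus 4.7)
The plan is to compute $\beta_\infty$ directly from its definition. Since $s \equiv 1$ at the archimedean place, the formula reduces to \eqref{beta for SL(2,R)}:
\[
\beta_\infty(g_1,g_2) = \big(x(g_1),x(g_2)\big)_\infty \big(-x(g_1)x(g_2),\,x(g_1g_2)\big)_\infty.
\]
I will rely on three elementary facts: (i) elements of $B^+(\R)$ have diagonal entries $y^{1/2}$ and $y^{-1/2}$, hence \emph{both diagonal entries are strictly positive}; (ii) the real Hilbert symbol $(a,b)_\infty$ equals $-1$ precisely when $a<0$ and $b<0$, and equals $+1$ otherwise (so any positive argument makes it trivial); (iii) the universal identity $(a,-a)_\infty=1$ for $a\neq 0$.

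For $g_1=\bpm a_1 & b_1 \\ 0 & d_1 \epm \in B^+(\R)$ and $g_2=\bpm a_2 & b_2 \\ c_2 & d_2 \epm \in SL(2,\R)$, the lower-left entry of $g_1g_2$ is $d_1c_2$. I would split on whether $c_2=0$. If $c_2\neq 0$, then $x(g_1)=d_1>0$, $x(g_2)=c_2$, and $x(g_1g_2)=d_1c_2$; the first Hilbert symbol is $(d_1,c_2)_\infty=1$ by positivity of $d_1$, and the second is $(-d_1c_2,\,d_1c_2)_\infty=1$ by fact (iii). If $c_2=0$, then $x(g_2)=d_2$ and $x(g_1g_2)=d_1d_2$, and the same two observations again yield $\beta_\infty(g_1,g_2)=1$.

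For $g_2\in B^+(\R)$, I would run the symmetric argument, splitting on $c_1$. If $c_1\neq 0$, then $x(g_1)=c_1$, $x(g_2)=d_2>0$, and $x(g_1g_2)=c_1a_2$; the first factor $(c_1,d_2)_\infty$ is trivial by positivity of $d_2$. The only slightly different point is the second factor $(-c_1d_2,\,c_1a_2)_\infty$: here I use that $a_2,d_2>0$, so $-c_1d_2$ and $c_1a_2$ automatically carry opposite signs, forcing the symbol to be $+1$. The case $c_1=0$ is immediate, as in the first half.

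I do not expect any real obstacle — the proof is a short case analysis in which positivity of the diagonal entries of $B^+(\R)$, together with $(a,-a)_\infty=1$, makes each factor in the cocycle trivial. The only thing requiring a moment's attention is ensuring that the argument is symmetric in the two asserted triviality statements, and that is taken care of by the two sign observations just above.
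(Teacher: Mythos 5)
Your proof is correct and follows essentially the same route as the paper's: reduce to the two-Hilbert-symbol formula (since $s\equiv 1$ at $\infty$), observe that the relevant diagonal entry of the $B^+(\R)$ factor is positive so the first symbol is trivial, and kill the second symbol by noting its two arguments have opposite signs (the identity $(a,-a)_\infty=1$). The only cosmetic difference is that you split explicitly on whether the lower-left entry vanishes, whereas the paper handles both cases at once via the sign relations $x(g_0g)=x(g_0)x(g)$ and $\sgn x(gg_0)=\sgn x(g)$.
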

\begin{proof}
Let
\[
g=\begin{pmatrix} a & b \\ c & d \end{pmatrix}\in SL(2,\R), \qquad g_0=\begin{pmatrix} y^{1/2} & xy^{-1/2} \\ & y^{-1/2} \end{pmatrix}\in B^{+}(\R),
\]
then we have
\[
gg_0=\begin{pmatrix} * & * \\ cy^{1/2} & (cx+d)y^{-1/2} \end{pmatrix}, \qquad g_0g=\begin{pmatrix} * & * \\ cy^{-1/2} & dy^{-1/2} \end{pmatrix}.
\]
In particular, we have
\[
x(g_0)=y^{1/2}>0, \qquad x(g_0g)=x(g_0)x(g), \qquad \operatorname{sgn}x(gg_0)=\operatorname{sgn}x(g).
\]
Hence by definition we have
\[
\beta_{\infty}(g,g_0)=\Big(x(g),x(g_0)\Big)_{\infty}\Big(-x(g)x(g_0),x(gg_0)\Big)_{\infty}=\Big(-x(g),x(gg_0)\Big)_{\infty}=1,
\]
\[
\beta_{\infty}(g_0,g)=\Big(x(g_0),x(g)\Big)_{\infty}\Big(-x(g_0)x(g),x(g_0g)\Big)_{\infty}=\Big(-x(g),x(g)\Big)_{\infty}=1. \qedhere
\]
\end{proof}

\begin{rmk}
By the lemma, we have the injective homomorphism
\[
B^+ (\R)\hookrightarrow\SL(2, \R), \qquad b \mapsto (b,1)
\]
which splits the covering map. We henceforth identify $B^+(\R)$ with its image in
$\SL(2, \R).$
\end{rmk}

Now we are ready to introduce the Iwasawa decomposition over $\SL(2,\R)$.

\begin{lem}
Each element $\wt g$ of $\SL(2, \R)$ has a unique expression
as $\wt g = b \wt \kappa$
with $b \in B^+(\R)$ and $\wt \kappa \in \wt K.$
\end{lem}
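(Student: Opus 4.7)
The plan is to lift the classical Iwasawa decomposition $SL(2,\R) = B^+(\R) \cdot SO_2(\R)$ to the double cover, using Lemma~\ref{beta trivial on B plus R} to control the cocycle and the two-to-one nature of $\theta \mapsto \kappa_{2\theta}$ to produce whichever sign is needed in the second coordinate.

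For existence, I would start with an arbitrary element $\wt g = (g,\zeta) \in \SL(2,\R)$ and write $g = b\kappa_\phi$ with $b \in B^+(\R)$ and $\phi \in \R$ via the classical Iwasawa decomposition. The equation $\kappa_{2\theta} = \kappa_\phi$ has exactly two solutions $\theta$ modulo $2\pi$ (namely $\phi/2$ and $\phi/2 + \pi$), and Remark~\ref{rmk on sqrt'} (or direct inspection of the definition of $\zeta$) shows that these two solutions yield opposite values of $\zeta(\theta)$. Therefore I can choose $\theta$ so that $\zeta(\theta)=\zeta$, i.e., so that $\wt\kappa_\theta = (\kappa_\phi,\zeta)$. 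Then, using Lemma~\ref{beta trivial on B plus R} which gives $\beta_\infty(b,\kappa_\phi) = 1$, I compute
\[
(b,1)\cdot \wt\kappa_\theta = (b,1)\cdot(\kappa_\phi,\zeta) = (b\kappa_\phi, \beta_\infty(b,\kappa_\phi)\cdot\zeta) = (g,\zeta) = \wt g,
\]
which is the desired decomposition, where $b$ is identified with $(b,1)$ via the embedding from the preceding remark.

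For uniqueness, suppose $b_1\wt\kappa_{\theta_1} = b_2\wt\kappa_{\theta_2}$ in $\SL(2,\R)$. Applying the homomorphism $\pr$ yields $b_1 \kappa_{2\theta_1} = b_2 \kappa_{2\theta_2}$ in $SL(2,\R)$, so classical Iwasawa uniqueness forces $b_1 = b_2$ and $\kappa_{2\theta_1} = \kappa_{2\theta_2}$. Since $(b_1,1)$ is invertible in $\SL(2,\R)$ (with inverse $(b_1^{-1},1)$, again by Lemma~\ref{beta trivial on B plus R}), we can cancel it to obtain $\wt\kappa_{\theta_1} = \wt\kappa_{\theta_2}$, which establishes uniqueness of the pair $(b,\wt\kappa) \in B^+(\R) \times \wt K$.

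No step here is really an obstacle — the only subtlety is that $\theta \mapsto \wt\kappa_\theta$ factors through $\R/2\pi\Z$ and is a bijection with $\wt K$, so the two preimages $\theta, \theta+\pi$ of a given rotation $\kappa_\phi$ produce the two lifts in $\SL(2,\R)$. The choice between them is exactly what is needed to match the sign $\zeta$ in $\wt g$. The cocycle triviality on $B^+(\R) \times SL(2,\R)$ ensures that neither the product $(b,1)\wt\kappa_\theta$ nor the cancellation of $b$ introduces any stray sign, so the argument is purely a book-keeping lift of the classical decomposition.
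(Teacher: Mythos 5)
Your proof is correct and takes exactly the approach the paper indicates: it derives the result from the classical Iwasawa decomposition together with the triviality of $\beta_\infty$ on $B^+(\R)\times SL(2,\R)$ (Lemma~\ref{beta trivial on B plus R}), merely filling in the sign-choice and cancellation details that the paper leaves implicit.
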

\begin{proof}
This follows immediately from
Lemma \ref{beta trivial on B plus R} and
the analogous statement for $SL(2, \R).$
\end{proof}
Based on the above lemma, we are able to define functions
$b: \SL(2, \R) \to B^+(\R)$ and $\theta: SL(2, \R) \to \R/2\pi \Z$ by
$$
\wt g = b( \wt g)\wt \kappa_{\theta(\wt g)}, \qquad
(\wt g \in \SL(2, \R)).$$
\begin{lem}
For $z=x+iy \in \c H,$ define
\[
b_z = \bspm y^{\frac 12 }& xy^{-\frac12}\\ & y^{-\frac12}\espm \in B^+(\R).
\]
Then we have
$$b( \wt g) = b_{\pr(\wt g)\cdot i}.$$
\end{lem}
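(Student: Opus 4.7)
The plan is to reduce the metaplectic statement to the classical Iwasawa decomposition of $SL(2,\R)$ by projecting along $\pr$, and then to exploit the fact that $i \in \c H$ is the fixed point of $SO_2(\R)$.

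First, I would apply $\pr$ to the metaplectic Iwasawa decomposition $\wt g = b(\wt g)\wt\kappa_{\theta(\wt g)}$. Since $\pr$ is a homomorphism, since $B^+(\R)$ is identified with its image in $\SL(2,\R)$, and since $\pr(\wt\kappa_\theta) = \kappa_{2\theta}$ by the very definition of $\wt \kappa_\theta$, this gives
\[
\pr(\wt g) = b(\wt g)\,\kappa_{2\theta(\wt g)}\in SL(2,\R).
\]

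Next, I would evaluate both sides on $i \in \c H$ under the fractional linear action. Because every rotation $\kappa_\phi$ fixes $i$, the right-hand side collapses to $b(\wt g)\cdot i$, yielding $\pr(\wt g)\cdot i = b(\wt g)\cdot i$. It then remains to identify this point in $\c H$ with the element $b(\wt g)$ of $B^+(\R)$; explicitly, for $b' = \bspm y^{1/2} & xy^{-1/2}\\ & y^{-1/2}\espm$ one computes directly $b'\cdot i = x + iy$, so $b_{b'\cdot i} = b'$. Applying this to $b' = b(\wt g)$ gives $b_{\pr(\wt g)\cdot i} = b_{b(\wt g)\cdot i} = b(\wt g)$, as required.

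There is no real obstacle here: the content is entirely the observation that the map $B^+(\R) \to \c H$, $b'\mapsto b'\cdot i$, is a bijection whose inverse is $z\mapsto b_z$, combined with the fact that rotations disappear when one acts on $i$. The only mild subtlety is the doubling of the angle in $\pr(\wt \kappa_\theta) = \kappa_{2\theta}$, but this is harmless for the computation since we only need that $\pr(\wt\kappa_{\theta(\wt g)})$ fixes $i$.
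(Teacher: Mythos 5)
Your proposal is correct and follows essentially the same route as the paper: both arguments reduce to the facts that $\wt\kappa_{\theta(\wt g)}$ (equivalently, its projection to $SO_2(\R)$) fixes $i$, so that $\pr(\wt g)\cdot i = b(\wt g)\cdot i$, and that $b\mapsto b\cdot i$ is a bijection $B^+(\R)\to\c H$ inverse to $z\mapsto b_z$. The paper phrases the first step via the stabilizer of $i$ being $\wt K$, but the content is identical to your explicit computation.
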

\begin{proof}
Clearly $\wt g \cdot z := \pr(\wt g)\cdot z$ is an action of $\SL(2, \R)$
on $\c H.$  The stabilizer of $i$ is the preimage of $SO_2(\R),$
which is $\wt K.$  Hence $\wt g\cdot i = b(\wt g)\cdot i.$  And for each $z \in \c H,$ the matrix $b_z$ can
be described as the unique element of $B^+(\R)$ mapping $i$ to $z.$
\end{proof}
In what follows, we shall continue to use the action of $\SL(2, \R)$ on
$\c H$ by $\wt g \cdot z := \pr(\wt g)\cdot z.$
\begin{cor}
For any $\wt g \in \SL(2, \R)$ and $ z \in \c H$ there exists
$\theta(\wt g, z) \in \R/2\pi \Z$ such that
$\wt g \cdot b_z = b_{\wt g \cdot z} \wt \kappa_{\theta(\wt g, z)}.$
\end{cor}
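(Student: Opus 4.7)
The corollary is an immediate consequence of the two preceding results, so the plan is essentially bookkeeping rather than new computation. I would apply the Iwasawa decomposition on $\SL(2,\R)$ to the element $\wt g \cdot b_z$, where $b_z \in B^+(\R)$ is identified with $(b_z,1) \in \SL(2,\R)$ via the splitting established in Lemma \ref{beta trivial on B plus R}. By the uniqueness lemma, there exist unique $b \in B^+(\R)$ and $\wt\kappa \in \wt K$ with
\[
\wt g \cdot b_z = b \cdot \wt \kappa.
\]

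Next I would identify $b$ explicitly. By the previous lemma applied to the element $\wt g \cdot b_z$, we have $b = b(\wt g \cdot b_z) = b_{\pr(\wt g \cdot b_z)\cdot i}$. Since $\pr$ is a homomorphism and $\pr(b_z) = b_z$, the point $\pr(\wt g \cdot b_z) \cdot i$ equals $\pr(\wt g)\cdot(b_z \cdot i) = \pr(\wt g)\cdot z$, which by our convention for the action of $\SL(2,\R)$ on $\c H$ is just $\wt g \cdot z$. Therefore $b = b_{\wt g \cdot z}$.

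It remains to name the $\wt K$-component: writing $\wt \kappa = \wt \kappa_{\theta(\wt g, z)}$ for a representative $\theta(\wt g, z) \in \R/2\pi\Z$ produces the required identity
\[
\wt g \cdot b_z = b_{\wt g \cdot z} \wt \kappa_{\theta(\wt g, z)}.
\]
Uniqueness of the Iwasawa decomposition, together with the fact that $\theta \mapsto \wt\kappa_\theta$ is a homomorphism from $\R/2\pi\Z$ onto $\wt K$ (by the lemma preceding Remark \ref{rmk on sqrt'}), shows $\theta(\wt g, z)$ is well-defined modulo $2\pi$.

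There is no real obstacle here; the only point requiring a moment of thought is the implicit use of the splitting $B^+(\R) \hookrightarrow \SL(2,\R)$ so that the product $\wt g \cdot b_z$ makes sense in the cover, and the compatibility of the $\SL(2,\R)$-action on $\c H$ with this product, both of which are supplied by Lemma \ref{beta trivial on B plus R} and the preceding lemma.
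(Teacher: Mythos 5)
Your argument is correct and is exactly the paper's intended proof: the authors state the corollary without a displayed proof but note immediately afterward that $\theta(\wt g, z) = \theta(\wt g b_z)$, i.e.\ one applies the Iwasawa decomposition of $\SL(2,\R)$ to $\wt g b_z$ and identifies the $B^+(\R)$-component as $b_{\wt g\cdot z}$ via the lemma $b(\wt g) = b_{\pr(\wt g)\cdot i}$. Your write-up just makes these steps explicit.
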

Specifically, $\theta( \wt g, z) = \theta(\wt gb_z),$
where the latter is defined using the Iwasawa decomposition
as above.
It is immediate from the definitions that the function
$\theta: \SL(2, \R) \times \c H \to \R/2\pi \Z$ is a cocycle,
i.e.,
$$
\theta( \wt g_1 \wt g_2 , z) = \theta( \wt g_1, \wt g_2 \cdot z) +
\theta( \wt g_2, z).
$$

Lastly, we discuss the slash operator of $\SL(2,\R)$ upon the functions over $\c H$.
Define
$\wt{\f j}: \SL(2, \R) \times \c H \to S^1$ by
$\wt{\f j}( \wt g, z) = e^{i \theta( \wt g, z)}.$
Clearly, $\wt{\f j}$ is a cocycle, since $\theta$ is.

\begin{lem}
The cocycle $\wt{\f j}( \wt g, z)$ is always a square root of $\f j(\pr(\wt g), z),$ namely
\[
\wt{\f j}(\wt g, z)^2 = \f j( \pr(\wt g), z).
\]
\end{lem}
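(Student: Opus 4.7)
The plan is to project the defining equation of $\theta(\wt g, z)$ from $\SL(2,\R)$ down to $SL(2,\R)$ and recognize the resulting identity as the cocycle relation characterizing $\f j$. Applying $\pr$ to
$$
\wt g \cdot b_z = b_{\wt g\cdot z}\,\wt\kappa_{\theta(\wt g, z)}
$$
and using $\pr(\wt\kappa_\theta) = \kappa_{2\theta}$ gives
$$
\pr(\wt g)\, b_z = b_{\pr(\wt g)\cdot z}\,\kappa_{2\theta(\wt g, z)}.
$$
The lemma will then follow from the general identity: for any $g \in SL(2,\R)$ and $z \in \c H$, if $g\, b_z = b_{g\cdot z}\,\kappa_\phi$ is the Iwasawa decomposition downstairs, then $\f j(g, z) = e^{i\phi}$. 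Indeed, applied to $g = \pr(\wt g)$ and $\phi = 2\theta(\wt g, z)$, this yields $\f j(\pr(\wt g), z) = e^{2i\theta(\wt g, z)} = \wt{\f j}(\wt g, z)^2$.

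To prove the general identity I would assemble three simple ingredients. First, $\f j$ is a cocycle on $SL(2,\R) \times \c H$: this follows from the cocycle identity for $j$ recalled in \S\ref{ss:slash ops} together with the evident multiplicativity $|j(g_1 g_2, z)| = |j(g_1, g_2\cdot z)|\cdot|j(g_2, z)|$. Second, $\f j(b, w) = 1$ for every $b \in B^+(\R)$ and $w \in \c H$, since $j(b, w) = y^{-1/2}$ is a positive real for $b = b_{x+iy}$. Third, as recorded in \S\ref{ss:slash ops}, $\f j(\kappa_\phi, i) = e^{i\phi}$.

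With these in hand I would compute $\f j(g\, b_z, i)$ in two ways. Expanding on the left and using $b_z \cdot i = z$ together with triviality of $\f j$ on $B^+(\R)$:
$$
\f j(g\, b_z, i) = \f j(g, b_z \cdot i)\,\f j(b_z, i) = \f j(g, z).
$$
Expanding instead via the Iwasawa form $g\, b_z = b_{g\cdot z}\,\kappa_\phi$ and using $\kappa_\phi \cdot i = i$:
$$
\f j(g\, b_z, i) = \f j(b_{g\cdot z}, \kappa_\phi\cdot i)\,\f j(\kappa_\phi, i) = e^{i\phi}.
$$
Equating the two expressions gives $\f j(g, z) = e^{i\phi}$, and the lemma follows.

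No step is a genuine obstacle; the only thing to be careful about is that the Iwasawa decomposition on $\SL(2,\R)$ really does project cleanly to the Iwasawa decomposition on $SL(2,\R)$, which is guaranteed by Lemma \ref{beta trivial on B plus R} (identifying $B^+(\R)$ with its canonical lift) and by the formula $\pr(\wt\kappa_\theta) = \kappa_{2\theta}$ defining $\wt K$.
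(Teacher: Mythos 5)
Your proof is correct and follows essentially the same route as the paper's: project the metaplectic Iwasawa decomposition down to $SL(2,\R)$, then compute $\f j(\pr(\wt g)\,b_z, i)$ in two ways using the cocycle property of $\f j$, the triviality of $\f j$ on $B^+(\R)$, and the formula $\f j(\kappa_\phi, i)=e^{i\phi}$. The paper is merely terser, writing the same computation as $\f j(\pr(\wt g),z)=\f j(\pr(\wt g)b_z,i)/\f j(b_z,i)=e^{2i\theta(\wt g,z)}$.
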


\begin{proof}
Write $\wt g b_z = b_{\wt g\cdot z}\wt\kappa_{\theta(\wt g, z)}.$
Then $\pr(\wt g) b_z = b_{\wt g\cdot z}\kappa_{2\theta(\wt g, z)}.$
But
$$
\f j( \pr(\wt g), z) =\frac{ \f j( \pr( \wt g) b_z, i)}{\f j(b_z, i)},
$$
and $\f j( b_z, i) =1,$ so we get
$$
\f j( \pr(\wt g), z) =\f j( \pr( \wt g) b_z, i) = e^{i 2\theta(\wt g, z)}
= \wt{\f j}(\wt g, z)^2. \qedhere$$
\end{proof}

Clearly $ \wt{\f j}(\wt g, z)$ is the principal value
of the square root of $\f j( \pr(\wt g), z)$ if and only if
$\theta(\wt g, z) \in ( -\pi/2, \pi/2).$

For $ f: \c H \to \C,$ now we define the slash operator
$$
\left(f\big|^\sim \wt g \right)(z) = \wt{\f j}(\wt g, z)^{-1} f( g \cdot z) = \wt{\f j}(\wt g, z)^{-1} f( \pr(\wt g) \cdot z) \qquad (\wt g \in \SL(2, \R)).
$$

\begin{lem}The slash operator
$\big|^\sim$ gives a well-defined right action of $\SL(2, \R)$
on the space of all functions $\c H \to \C.$
\end{lem}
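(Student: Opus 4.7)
The plan is to verify the two defining properties of a right action directly from the definition of the slash operator, leveraging the two structural facts already established: that $\pr: \SL(2,\R) \to SL(2,\R)$ is a homomorphism, and that $\wt{\f j}$ is a cocycle on $\SL(2,\R) \times \c H$.

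First I would check that the identity element of $\SL(2,\R)$ acts trivially. The identity is $(I, 1)$, which equals $\wt\kappa_0$ by the definition $\wt\kappa_\theta = (\kappa_{2\theta}, \zeta(\theta))$ together with $\zeta(0)=1$. Taking $\wt g = (I,1)$ in the defining relation $\wt g \cdot b_z = b_{\wt g \cdot z} \wt\kappa_{\theta(\wt g, z)}$ forces $\theta((I,1), z) \equiv 0 \pmod{2\pi}$, hence $\wt{\f j}((I,1), z) = 1$. Since also $\pr((I,1)) = I$ acts trivially on $\c H$, we get $f\bigl|^\sim (I,1) = f$.

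Next I would verify associativity. For $\wt g_1, \wt g_2 \in \SL(2,\R)$ and $z \in \c H$, unwinding the definitions gives
\[
\bigl((f\bigl|^\sim \wt g_1)\bigl|^\sim \wt g_2\bigr)(z)
= \wt{\f j}(\wt g_2, z)^{-1} \wt{\f j}(\wt g_1, \wt g_2 \cdot z)^{-1} f\bigl(\pr(\wt g_1)\cdot(\pr(\wt g_2)\cdot z)\bigr).
\]
The cocycle identity $\wt{\f j}(\wt g_1 \wt g_2, z) = \wt{\f j}(\wt g_1, \wt g_2\cdot z)\,\wt{\f j}(\wt g_2, z)$ collapses the two $\wt{\f j}$ factors into $\wt{\f j}(\wt g_1 \wt g_2, z)^{-1}$, while the homomorphism property of $\pr$ together with the fact that the $\SL(2,\R)$-action on $\c H$ is defined to factor through $\pr$ gives
\[
\pr(\wt g_1)\cdot\bigl(\pr(\wt g_2)\cdot z\bigr) = \pr(\wt g_1 \wt g_2)\cdot z = (\wt g_1 \wt g_2)\cdot z.
\]
Combining these two reductions yields $\bigl((f\bigl|^\sim \wt g_1)\bigl|^\sim \wt g_2\bigr)(z) = \bigl(f\bigl|^\sim(\wt g_1 \wt g_2)\bigr)(z)$, as required.

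Honestly, this lemma is essentially a formal consequence of work already done. The only subtlety to watch is that the action of $\SL(2,\R)$ on $\c H$ was defined via $\pr$, so the homomorphism property of $\pr$ is what is really doing the work on the argument side, while the cocycle property (proved in the preceding lemma) handles the automorphy factor. There is no genuine obstacle; if any step deserves care it is confirming the identity element computation, since one must unpack the identification $(I,1) = \wt\kappa_0$ to conclude $\wt{\f j}((I,1), z) = 1$.
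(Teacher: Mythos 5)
Your proof is correct and follows the same route as the paper, which simply observes that the statement is an immediate consequence of the cocycle property of $\wt{\f j}$ (together with the fact that the action on $\c H$ factors through the homomorphism $\pr$). Your write-up merely spells out the details, including the identity-element check, which the paper leaves implicit.
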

\begin{proof}
This follows immediately from the fact that $\wt{\f j}$ is a cocycle.
\end{proof}

\begin{lem}
Let $f: \c H \to \C$ and $g \in SL(2, \R)$. Then we have
\begin{equation}\label{eq: Maa slash and tilde slash}
\left(f\big|^\sim (g, 1) \right)
= \left( f\ms g \right) .
\end{equation}
\end{lem}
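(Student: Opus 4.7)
The strategy is to apply the criterion stated immediately preceding the lemma: $\wt{\f j}(\wt g, z)$ is the principal value square root of $\f j(\pr(\wt g), z)$ if and only if $\theta(\wt g, z) \in (-\pi/2, \pi/2)$. Once I verify this condition for $\wt g = (g, 1)$, comparing the definitions
\[
\left(f\big|^\sim (g,1)\right)(z) = \wt{\f j}((g,1), z)^{-1} f(g \cdot z) \quad\text{and}\quad \left(f\ms g\right)(z) = \f j(g, z)^{-1/2} f(g \cdot z)
\]
gives the identity at every $z$. So the task reduces to computing $\theta((g, 1), z)$ and checking that it lands in the required interval.

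The key step is to unfold the defining relation $(g, 1) \cdot b_z = b_{g\cdot z}\,\wt\kappa_{\theta((g,1), z)}$ as an identity in $\SL(2, \R)$. Setting $\theta_0 := \theta((g,1), z)$ and using $\wt\kappa_{\theta_0} = (\kappa_{2\theta_0}, \zeta(\theta_0))$, the product on the right becomes $\bigl(b_{g\cdot z}\kappa_{2\theta_0},\, \zeta(\theta_0)\bigr)$ by Lemma~\ref{beta trivial on B plus R}, while the product on the left is $(g b_z, 1)$ by the same lemma applied to $\beta_\infty(g, b_z) = 1$. Matching second coordinates forces $\zeta(\theta_0) = 1$, which by the piecewise definition of $\zeta$ places $\theta_0$ in $[-\pi/2, \pi/2) \pmod{2\pi}$. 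The criterion then delivers $\wt{\f j}((g,1), z) = \f j(g, z)^{1/2}$ (principal value), completing the proof.

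The only genuine subtlety, and the step I would expect to be the main (if essentially cosmetic) obstacle, is the boundary value $\theta_0 = -\pi/2$, where $\f j(g, z) \in \R_{<0}$ and the paper's principal-value convention (``argument strictly in $(-\pi/2, \pi/2)$'') does not single out a square root. A direct check shows this happens exactly when $g$ has bottom row $(0, d)$ with $d<0$, so $\f j(g, z) \equiv -1$ on all of $\c H$. One handles this either by extending the convention with $\sqrt{-1} := e^{-i\pi/2} = -i$, which is consistent with the value produced by $\wt{\f j}$ and forced by the $\zeta$-normalization, or by restricting attention to the open dense set on which the Maass slash operator is unambiguously defined and concluding by continuity.
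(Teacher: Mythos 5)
Your proof follows essentially the same route as the paper's: both reduce the claim to showing $\zeta(\theta((g,1),z))=1$ via the identity $(g,1)(b_z,1)=(gb_z,1)$ guaranteed by Lemma~\ref{beta trivial on B plus R}, and then invoke the criterion that $\wt{\f j}((g,1),z)$ is the principal square root of $\f j(g,z)$ exactly when $\theta((g,1),z)\in(-\pi/2,\pi/2)$. Your additional treatment of the boundary case $\theta((g,1),z)=-\pi/2$ (equivalently $c=0$, $d<0$, where $\f j(g,z)=-1$ sits on the branch cut and $\zeta=1$ does not force the open interval) is a genuine refinement that the paper's proof passes over, and your proposed resolution agrees with the convention $\sqrt{z}{}'$ of Remark~\ref{rmk on sqrt'}.
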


\begin{proof}
Clearly $ \wt{\f j}((g,1), z)$ is the principal value
of the square root of $\f j( g, z)$ if and only if
$\theta((g,1), z) \in ( -\pi/2, \pi/2),$ which in turn
is equivalent to $\zeta(\theta((g,1),z)) = 1$.

Now in $\SL(2,\R)$ we have
\[
(g,1)b_z=(g,1)(b_z,1)=(gb_z,1),
\]
so by definition $\zeta(\theta((g,1),z)) = 1$. This confirms
that $ \wt{\f j}((g,1), z)$ is the principal value
of the square root of $\f j( g, z)$. Hence
\[
\left(f\big|^\sim (g, 1) \right)
=\wt {\f j}((g,1),z)f(g\cdot z)
=\f j(g,z)^{-\frac{1}{2}}f(g\cdot z)
= \left( f\ms g \right). \qedhere
\]
\end{proof}

As an example, let $\chi\pmod{N}$ be an even Dirichlet character and consider the classical theta function $\theta_{\chi}$. Define
$$
\theta_{\chi}^{\Maa}( x+iy) = y^{\frac 14} \theta_\chi(x+iy).
$$
Then for $\gamma\in\Gamma_0(4N^2)$ we have
\[
\theta_{\chi}^{\Maa}(\gamma z)=\chi(d)\chi_c(d)\varepsilon_d^{-1}
\f j(\gamma,z)^{1/2}\theta_{\chi}^{\Maa}(z).
\]
Hence
\[
\left(\theta_{\chi}^{\Maa} \big|^\sim (\gamma,1)\right)(z)=
\left(\theta_{\chi}^{\Maa} \ms \gamma\right)(z)=\chi(d)\chi_c(d)\varepsilon_d^{-1}
\theta_{\chi}^{\Maa}(z).
\]

\section{Local Weil Representations}

\subsection{Local Weil Representation}
\label{ss: local weil rep}
The Bruhat-Schwartz space of $\Q_v$ will be denoted
$\c S(\Q_v).$  It is the Schwartz space when $v = \infty$
and the space of all locally constant compactly supported
functions when $v$ is a prime.
Following~\cite{Gelbart-PS} we consider the family of
representations
$r^{\psi_v}$
of $\SL(2, \Q_v)$
on $\c S(\Q_v),$
indexed by the nontrivial characters $\psi_v$ of $\Q_v,$
and defined by
$$
\begin{aligned}
\left[r^{\psi_v}\left(\bpm 0&1 \\ -1& 0 \epm,1\right) .
\vph\right](x) &= \gm(\psi_v) \widehat \varphi(x) \\
\left[r^{\psi_v}\left(\bpm 1& b \\ &1 \epm ,1\right). \vph\right](x)
&= {\psi_v}(bx^2) \vph(x)\\
\left[r^{\psi_v}\left(\bpm a&  \\ &a^{-1} \epm,1\right) . \vph\right](x)
&= |a|^{\frac 12} \frac{\gm( {\psi_v})}{\gm({\psi_{v,a}})} \vph(ax)\\
[r^{\psi_v}(I_2,\zeta)\vph]&=\zeta \cdot \vph,
\end{aligned}
$$
where the Fourier transform is given by
\[
\widehat{\varphi}(x)=\alpha(\psi_v)\int_{\Q_v}\varphi(y)\psi_v(2xy)\,dy,
\]
$dy$ is the standard Haar measure over $\Q_v$, $\alpha(\psi_v)$ is the normalization factor such that $\widehat{\widehat{\varphi}}(x)=\varphi(-x)$, and \footnote{Note that the formula for $\gamma(\psi_{v,a})$ on Page 36 of~\cite{Gelbart} contains a typo.}
$$
\gm(\psi_{v,a}) = \begin{cases}
e^{\frac{a}{|a|}\frac{\pi i}{4}}, & \text{if $v = \infty$ and $\psi_{\infty}(x)=e^{2\pi ix}$;} \\
\lim_{m\to -\infty}
\alpha(\psi_{v,a})\int_{p^m \Z_p} \psi_v( ay^2) \, dy , &
\text{if $v$ is a prime.}
\end{cases}
$$

\begin{rmk}
The constant $\gm(\psi_{v,a})$ is an eighth root of
one.  This is obvious when $v = \infty$ and a result
of Weil otherwise. (Cf.~\cite[Page 36]{Gelbart}.)
\end{rmk}

Now we would like to explicitly describe the local Weil representation with respect to the additive character
\[
e_v(x)=\begin{cases}
e^{2\pi i x},                      & \text{if $v=\infty$;} \\
\displaystyle{e^{-2\pi i\{x\}_p}}, & \text{if $v=p$,}
\end{cases}
\]
where, for every prime $p$, we denote by
\[
\{\cdot\}_p: \Q_p\to\Q, \qquad \sum_{n=-N}^{\infty}a_np^n\mapsto\sum_{n=-N}^{-1}a_np^n
\]
the ``$p$-adic fractional part'' of $\Q_p$.

\begin{prop}\label{prop:WeilRepresentationIntegralNormalization}
Let $a\in\Q_v^{\times}$. Then we have
\[
\alpha(e_{v,a})=|2a|_v^{1/2}.
\]
\end{prop}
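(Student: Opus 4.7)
The plan is to pin down $\alpha(e_{v,a})$ by unfolding the definition of the Fourier transform, reducing to the known self-duality of the standard Haar measure on $\Q_v$ with respect to the character $e_v$, and then imposing the inversion identity $\widehat{\widehat\varphi}(x)=\varphi(-x)$.

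First I would recall (or verify on $\1_{\Z_p}$, and on a Gaussian for $v=\infty$) the base case: for the standard Haar measure $dy$ on $\Q_v$, the ``bare'' Fourier transform $\tilde\phi(x):=\int_{\Q_v}\phi(y)e_v(xy)\,dy$ already satisfies $\tilde{\tilde\phi}(x)=\phi(-x)$. This is because $e_p$ has conductor $\Z_p$, and the self-dual measure for such a character assigns $\Z_p$ measure one; for $v=\infty$ it is the usual inversion formula for $e^{2\pi ixy}$ with Lebesgue measure.

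Second, since $e_{v,a}(2xy)=e_v(2axy)$, I would make the substitution $u=2ay$ inside the integral defining $\widehat\varphi$, picking up a factor of $|2a|_v^{-1}$, to obtain
\[
\widehat\varphi(x)=\frac{\alpha(e_{v,a})}{|2a|_v}\,\tilde\phi(x), \qquad \text{where } \phi(u):=\varphi\bigl(u/(2a)\bigr).
\]
Applying $\widehat{\phantom{\varphi}}$ a second time and performing the same substitution, together with the base-case identity
\[
\tilde{\tilde\phi}(2ax)=\phi(-2ax)=\varphi(-x),
\]
a short computation gives
\[
\widehat{\widehat\varphi}(x)=\frac{\alpha(e_{v,a})^2}{|2a|_v}\,\varphi(-x).
\]
The inversion requirement then forces $\alpha(e_{v,a})^2=|2a|_v$, and taking the positive real square root (the convention implicit in the formula, needed to make the Weil representation unitary) yields the claimed $\alpha(e_{v,a})=|2a|_v^{1/2}$.

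The argument is essentially bookkeeping, and no serious obstacle arises: one must simply track both appearances of the Jacobian $|2a|_v^{-1}$ from the substitution $u=2ay$ (one inside $\widehat\varphi$ and one when iterating the transform), and confirm that the positive square root is the intended convention.
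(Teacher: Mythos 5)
Your argument is correct. It differs from the paper's proof mainly in mechanics: the paper simply evaluates the double transform on one explicit test function at each place ($\1_{\Z_p}$ for $v=p$, the Gaussian $e^{-\pi x^2}$ for $v=\infty$) and reads off the constant, whereas you change variables $u=2ay$ to reduce uniformly to the self-duality of the standard Haar measure with respect to $e_v$, picking up the Jacobian $|2a|_v^{-1}$ once in each of the two transforms. Your route has the advantage of treating all places and all test functions at once and making transparent \emph{why} the answer is $|2a|_v^{1/2}$ (it is exactly the square root of the accumulated Jacobian); its only external input is the standard Fourier inversion formula for the self-dual measure attached to $e_v$, which is the same fact the paper implicitly relies on when it assumes a single normalizing constant works for every $\varphi$. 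The paper's computation is more self-contained at the cost of being done place by place. Both correctly take the positive square root, which is the intended convention.
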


\begin{proof}
If $v$ is a finite place, say at $p$, then as the test function we take $\varphi(x)=\1_{\Z_p}(x)$. By definition we have
\[
\widehat{\varphi}(x)=\alpha(e_{p,a})\int_{\Z_p}e_p(2axy)\,dy=\alpha(\psi_{p,a})\1_{(2a)^{-1}\Z_p}(x),
\]
so
\[
\widehat{\widehat{\varphi}}(x)=\alpha(e_{p,a})^2\int_{(2a)^{-1}\Z_p}e_p(2axy)\,dy=\frac{\alpha(e_{p,a})^2}{|2a|_p}\1_{\Z_p}(x).
\]
Hence by definition we have $\alpha(e_{p,a})=|2a|_p^{1/2}$.

Now consider the case $v=\infty$. As the test function we take $\varphi(x)=e^{-\pi x^2}$, then
\[
\widehat{\varphi}(x)=\alpha(e_{\infty,a})\int_{-\infty}^{\infty}e^{-\pi y^2+4\pi aixy}\,dy=\alpha(e_{\infty,a})e^{-4\pi a^2x^2},
\]
\[
\widehat{\widehat{\varphi}}(x)=\alpha(e_{\infty,a})^2\int_{-\infty}^{\infty}e^{-4\pi a^2y^2+4a\pi ixy}\,dy=\frac{\alpha(e_{\infty,a})^2}{|2a|}e^{-\pi x^2}.
\]
Hence again we have $\alpha(e_{\infty,a})=|2a|^{1/2}$.
\end{proof}
Next we evaluate $\gamma(e_{p,a}).$
\begin{prop}\label{prop:computation of gammas}
Let $a\in\Z_p\setminus\{0\}$, say with the decomposition $a=\alpha p^r$ for some $r\in\Z$ and $\alpha\in\Z_p^{\times}$. Then we have
\[
\gamma(e_{p,a})=\begin{cases}
\displaystyle{\frac{1+i}{\sqrt{2}}\varepsilon_{-\alpha}^{-1}\left(\frac{2}{-\alpha}\right)^r},           & \text{if $p=2$;}                              \\[2ex]
\displaystyle{ \left(\frac{-\alpha}{p}\right)}, & \text{if $p\equiv 1\pmod{4}$ and $2\nmid r$;} \\[2ex]
\displaystyle{i\left(\frac{-\alpha}{p}\right)}, & \text{if $p\equiv 3\pmod{4}$ and $2\nmid r$;} \\[2ex]
1,                                              & \text{if $2\nmid p$, $2|r$,}
\end{cases}
\]
where, for the factor $\varepsilon_{-\alpha}$ and for the Kronecker symbols involving $\alpha$, we adopt the convention they are evaluated with respect to some $\alpha^{*}\in\Z$ such that $|\alpha-\alpha^{*}|_p$ is sufficiently small.
\end{prop}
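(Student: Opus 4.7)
The approach is a direct evaluation of the limit defining $\gamma(e_{p,a})$, using two reductions: a change of variables that absorbs the power $p^r$ from $a = \alpha p^r$, and a decomposition of $p^m \Z_p$ into shells of constant $p$-adic valuation. Combining the definition of $\gamma(e_{p,a})$ with Proposition \ref{prop:WeilRepresentationIntegralNormalization} gives
\[
\gamma(e_{p,a}) = |2a|_p^{1/2} \lim_{m \to -\infty} \int_{p^m \Z_p} e_p(ay^2)\,dy.
\]
First I would substitute $y = p^{-\lfloor r/2 \rfloor} z$, which transforms the integral into the corresponding one for $a' = \alpha$ when $r$ is even, and for $a' = \alpha p$ when $r$ is odd. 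The Jacobian of the substitution exactly cancels the change in $|2a|_p^{1/2}$ versus $|2a'|_p^{1/2}$, so the problem reduces to computing $\gamma(e_{p,\alpha})$ and $\gamma(e_{p,\alpha p})$ only.

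Next, using the decomposition
\[
p^m \Z_p = \Z_p \sqcup \bigsqcup_{k=1}^{-m} p^{-k} \Z_p^\times,
\]
the contribution from $\Z_p$ is $1$ in both base cases (the integrand is identically $1$ there). For the shell $p^{-k} \Z_p^\times$ the substitution $y = p^{-k} u$ with $u \in \Z_p^\times$ converts the integral into $p^{-k}$ times a primitive quadratic Gauss sum over $(\Z/p^n\Z)^\times$ for a suitable $n$ depending on $k$ and $r$. For odd $p$, the elementary reduction $\sum_{u=0}^{p^n-1} e^{2\pi i\alpha u^2/p^n} = p \sum_{v=0}^{p^{n-2}-1} e^{2\pi i\alpha v^2/p^{n-2}}$ (valid for $n \geq 2$, proved by writing $u = v + p^{n-1} w$) forces the primitive Gauss sum modulo $p^n$ to vanish whenever $n \geq 2$. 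Consequently only small $k$ contribute: the even-$r$ case yields $\gamma(e_{p,\alpha}) = 1$, while the odd-$r$ case yields $\gamma(e_{p,\alpha p}) = p^{-1/2} G(-\alpha, p) = p^{-1/2}\left(\frac{-\alpha}{p}\right)\sqrt{p^*}$ with $p^* = (-1)^{(p-1)/2} p$. Splitting by $p \bmod 4$ recovers the three odd-prime cases of the statement.

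The main obstacle is the case $p = 2$, where the analysis is genuinely more delicate. The analogous reduction for $2$-adic Gauss sums requires $n \geq 3$ and carries a nontrivial $\alpha$-dependent weight, and the primitive Gauss sums modulo $2^n$ no longer vanish outright but instead stabilize to an explicit eighth root of unity as $n$ grows. The plan is to compute the base cases $r = 0$ and $r = 1$ directly by evaluating the Gauss sums modulo $2^n$ for $n$ large, classifying the answer according to $-\alpha \bmod 8$. The resulting dependence on $-\alpha \bmod 4$ is exactly captured by the factor $\varepsilon_{-\alpha}^{-1}$, while the extra dependence on $-\alpha \bmod 8$ that appears only for $r = 1$ is captured by $\left(\frac{2}{-\alpha}\right)$. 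The convention allowing $\alpha$ to be replaced by an integer $\alpha^*$ sufficiently $p$-adically close is automatic, since all invariants $\varepsilon_{-\alpha}$, $\left(\frac{-\alpha}{p}\right)$, and $\left(\frac{2}{-\alpha}\right)$ depend only on finitely many $p$-adic digits of $\alpha$.
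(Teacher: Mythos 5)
Your proposal is correct, and it reaches the result by a somewhat different route than the paper. The paper makes a single change of variables $y\mapsto y/p^{m}$ to convert the defining integral into one complete quadratic Gauss sum $\sum_{y\bmod p^{2m-r}}e(-\alpha y^2/p^{2m-r})$ and then quotes Gauss's classical evaluation wholesale, in all cases including $p=2$ (this is where the factors $\varepsilon_{-\alpha}^{-1}(1+i)\left(\frac{2}{-\alpha}\right)^r$ and $\varepsilon_{p^{2m-r}}\left(\frac{-\alpha}{p}\right)^r$ come from, followed by the observation $\varepsilon_{p^{2m-r}}=\varepsilon_{p^r}$). You instead first reduce $r$ modulo $2$ by the substitution $y=p^{-\lfloor r/2\rfloor}z$ — a clean reduction, and your check that the Jacobian cancels the change in $|2a|_p^{1/2}$ is right — and then decompose the domain into shells $p^{-k}\Z_p^\times$, so that each shell contributes a \emph{primitive} Gauss sum. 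For odd $p$ this buys you something real: the elementary reduction $S_n=pS_{n-2}$ kills every primitive sum modulo $p^n$ with $n\ge 2$, so the only classical input you need is the Gauss sum modulo $p$ itself, and the three odd-prime cases drop out of $|2a'|_p^{1/2}\cdot G(-\alpha,p)=\varepsilon_p\left(\frac{-\alpha}{p}\right)$. (You have also correctly tracked the sign coming from $e_p(x)=e^{-2\pi i\{x\}_p}$, which is why $-\alpha$ rather than $\alpha$ appears throughout.) For $p=2$, however, your shells do not vanish and you end up needing the evaluation of quadratic Gauss sums modulo $2^n$ for large $n$, classified by $-\alpha\bmod 8$ — which is exactly the classical formula the paper quotes — so there the two approaches converge and yours offers no shortcut; to make that case airtight you would still have to carry out the stabilization computation (e.g.\ via $S_n-2S_{n-2}=0$ for $n\ge 4$ together with the values of $S_2$ and $S_3$), but this is routine and the plan as stated is sound.
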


\begin{proof}
We have, for $m\gg 1$, that
\begin{align*}
\gamma(e_{p,a})
=& \alpha(e_{p,a})\int_{p^{-m}\Z_p}e_p(ay^2)\,dy=|2a|_p^{1/2}\int_{p^{-m}\Z_p}e(-\{ay^2\}_p)\,dy=|2a|_p^{1/2}p^m\int_{\Z_p}e\left(-\left\{\frac{ay^2}{p^{2m}}\right\}_p\right)\,dy \\
=& |2|_p^{1/2}p^{m-r/2}\int_{\Z_p}e\left(-\left\{\frac{\alpha y^2}{p^{2m-r}}\right\}_p\right)\,dy=|2|_p^{1/2}p^{r/2-m}\sum_{y\in\Z_p/p^{2m-r}\Z_p}e\left(-\frac{\alpha y^2}{p^{2m-r}}\right) \\
=& |2|_p^{1/2}p^{r/2-m}\sum_{y\in\Z/p^{2m-r}\Z}e\left(-\frac{\alpha y^2}{p^{2m-r}}\right).
\end{align*}
To evaluate the inner quadratic Gauss sum, we quote the following famous result of Gauss
\[
\sum_{n=0}^{c-1}e\left(\frac{an^2}{c}\right)=\begin{cases}
\displaystyle{\varepsilon_c\left(\frac{a}{c}\right)\sqrt{c}},      & \text{if $2\nmid c$;} \\[1ex]
\displaystyle{\varepsilon_a^{-1}(1+i)\left(\frac{c}{a}\right)\sqrt{c}}, & \text{if $a$ is odd, $4|c$;} \\[1ex]
0,                                                              & \text{if $c\equiv 2\pmod{4}$.}
\end{cases}
\]

We start with the case that $p=2$. Since we have assumed $m\gg 1$, the quadratic Gauss sum becomes
\[
\sum_{y\in\Z/2^{2m-r}\Z}e\left(-\frac{\alpha y^2}{2^{2m-r}}\right)=\varepsilon_{-\alpha}^{-1}(1+i)\left(\frac{2^{2m-r}}{-\alpha}\right)2^{m-r/2}=\varepsilon_{-\alpha}^{-1}(1+i)\left(\frac{2}{-\alpha}\right)^r2^{m-r/2}.
\]
Hence we have
\[
\gamma(e_{2,a})=\frac{1+i}{\sqrt{2}}\varepsilon_{-\alpha}^{-1}\left(\frac{2}{-\alpha}\right)^r.
\]

Now we assume that $p\neq 2$, then the above result on quadratic Gauss sums shows that
\[
\gamma(e_{p,a})=\varepsilon_{p^{2m-r}}\left(\frac{-\alpha}{p^{2m-r}}\right)=\varepsilon_{p^{2m-r}}\left(\frac{-\alpha}{p}\right)^r.
\]
Since $p^2\equiv 1\pmod{4}$, we have
\[
\varepsilon_{p^{2m-r}}=\varepsilon_{p^r}=\begin{cases}
1, & \text{if $p\equiv 1\pmod{4}$ or $2|r$;} \\
i, & \text{if $p\equiv 3\pmod{4}$ and $2\nmid r$.}
\end{cases}
\]
Hence we have
\[
\gamma(e_{p,a})=\begin{cases}
\displaystyle{ \left(\frac{-\alpha}{p}\right)}, & \text{if $p\equiv 1\pmod{4}$ and $2\nmid r$;} \\[1ex]
\displaystyle{i\left(\frac{-\alpha}{p}\right)}, & \text{if $p\equiv 3\pmod{4}$ and $2\nmid r$;} \\[1ex]
1,                                              & \text{if $2|r$.}
\end{cases} \qedhere
\]
\end{proof}

\subsection{The Real Weil Representation}

In this section, we consider the real Weil representation of $\SL(2,\R)$.

\begin{lem}
Let $\phi_\infty^0(x)=e^{-2\pi x^2}$. Then
$$r^{e_\infty}(\wt \kappa_\theta) \phi_\infty^0
= e^{-i \theta} \phi_\infty^0, \qquad
(\forall \theta \in \R)$$
\end{lem}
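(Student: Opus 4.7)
The plan is to verify the formula on the open interval $\theta \in (0, \pi/2)$ by an explicit Bruhat computation, then extend to all $\theta \in \R$ using that $\theta \mapsto \wt\kappa_\theta$ is a homomorphism (the preceding lemma).

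For $\theta \in (0, \pi/2)$ we have the Bruhat decomposition
\[
\kappa_{2\theta} = \unip{\cot 2\theta}\,\adiag{1}{-1}\,\mdiag{-\sin 2\theta}{-(\sin 2\theta)^{-1}}\,\unip{\cot 2\theta}.
\]
Lifting each factor to $\SL(2,\R)$ with second coordinate $1$ and multiplying, the cocycle $\beta_\infty$ of \eqref{beta for SL(2,R)} contributes a single nontrivial Hilbert symbol $(-1, -(\sin 2\theta)^{-1})_\infty = -1$ (the remaining Hilbert symbols are trivial because $\sin 2\theta>0$), so the product equals $(\kappa_{2\theta}, -1)$. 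Since $\zeta(\theta) = 1$ on this interval, $\wt\kappa_\theta = (I,-1)\cdot(\kappa_{2\theta}, -1)$, and it suffices to apply the four generators in sequence to $\phi_\infty^0$ and then multiply by $-1$. Each unipotent factor contributes multiplication by $e^{2\pi i\cot 2\theta\cdot x^2}$; the diagonal factor contributes $\gamma(e_\infty)/\gamma(e_{\infty,-\sin 2\theta}) = i$ and rescales; the Weyl element applies $\gamma(e_\infty) = e^{i\pi/4}$ times the Fourier transform. The Fourier transform of the resulting complex Gaussian is evaluated by completing the square; the key identity $(1 - i\cot 2\theta)\sin^2 2\theta = -i\sin 2\theta\cdot e^{2i\theta}$ shows the width parameter has principal argument in $(-\pi/2, \pi/2)$, so the principal-branch square root is unambiguous. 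Combining all factors, the two unipotent multiplications cancel the imaginary Gaussian widths, yielding $r^{e_\infty}((\kappa_{2\theta}, -1))\phi_\infty^0 = -e^{-i\theta}\phi_\infty^0$ and hence $r^{e_\infty}(\wt\kappa_\theta)\phi_\infty^0 = e^{-i\theta}\phi_\infty^0$ for $\theta \in (0, \pi/2)$.

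To extend to all $\theta \in \R$, observe that $\theta \mapsto r^{e_\infty}(\wt\kappa_\theta)\phi_\infty^0/\phi_\infty^0$ is a continuous multiplicative function on $\R/2\pi\Z$ (continuity from continuity of $r^{e_\infty}$ on the compact group $\wt K$, multiplicativity from the homomorphism in the preceding lemma), hence equals $e^{im\theta}$ for a unique $m \in \Z$; matching $e^{-i\theta}$ on the interval $(0, \pi/2)$ forces $m = -1$. The main technical delicacy is the branch tracking in the Gaussian Fourier transform: every $\sqrt{\cdot}$ must be taken as the principal value, and one must justify that the product of principal-value square roots of complex numbers with positive real part equals the principal-value square root of the product (valid here because all relevant arguments lie in $(-\pi/2, \pi/2)$).
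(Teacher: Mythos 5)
Your proof is correct and follows essentially the same route as the paper's: the same Bruhat factorization of $\kappa_{2\theta}$ into two unipotents, a diagonal element and the Weyl element, the same cocycle bookkeeping producing the extra sign $-1$ in the lift, and the same explicit Gaussian Fourier transform with principal-branch tracking. The only variation is that you perform the computation just for $\theta\in(0,\pi/2)$, where every relevant argument lies in $(-\pi/2,\pi/2)$ and the branch of the square root is unambiguous, and then extend via the homomorphism $\theta\mapsto\wt\kappa_\theta$, whereas the paper computes directly for all $\theta$ with $\sin 2\theta\neq 0$ using the convention $\Arg(\sqrt{z}\,')\in[-\pi/2,\pi/2)$ and the sign $\zeta(\theta)$; note that your extension step does not even require the continuity of $r^{e_\infty}$ that you invoke, since every class in $\R/2\pi\Z$ is a finite sum of elements of $(0,\pi/2)$ and multiplicativity alone then forces the eigenvalue to be $e^{-i\theta}$ everywhere.
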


\begin{proof}
We recall that
\[
\wt\kappa_{\theta}=(\kappa_{2\theta},\zeta(\theta))=\left(\begin{pmatrix} \cos 2\theta & -\sin 2\theta \\ \sin 2\theta & \cos 2\theta \end{pmatrix},\zeta(\theta)\right).
\]
It is straightforward to verify that the lemma is valid if $\theta$ is of form $n\pi/2$ for some $n\in\Z$, so we may assume henceforth that $\sin 2\theta\neq 0$.

By direct computations we have
\begin{multline*}
\left(\begin{pmatrix} 1 & \cos 2\theta/\sin 2\theta \\  & 1 \end{pmatrix},1\right)
\left(\begin{pmatrix} -1/\sin 2\theta & \\ & -\sin 2\theta \end{pmatrix},1\right)
\left(\begin{pmatrix}  & 1 \\ -1 & \end{pmatrix},1\right)
\left(\begin{pmatrix} 1 & \cos 2\theta/\sin 2\theta \\ & 1 \end{pmatrix},1\right) \\
=\left(\begin{pmatrix} \cos 2\theta & -\sin 2\theta \\ \sin 2\theta & \cos 2\theta \end{pmatrix},-\varepsilon\right),
\end{multline*}
where for simplicity we write $\varepsilon=\sgn\sin 2\theta$. Write
\begin{multline*}
f(x)=r^{e_{\infty}}\left(\begin{pmatrix} 1 & \cos 2\theta/\sin 2\theta \\ & 1 \end{pmatrix}\right)\phi_{\infty}^0(x) \\
=e_{\infty}\left(\frac{\cos 2\theta}{\sin 2\theta}x^2\right)\phi_{\infty}^0(x)=\exp\Big(-2\pi x^2(1-i\cot 2\theta)\Big),
\end{multline*}
then
\begin{align*}
r^{e_{\infty}}(\wt \kappa_{\theta})\phi_{\infty}^0(x)
=& -\varepsilon\zeta(\theta)r^{e_{\infty}}\left(\begin{pmatrix} 1 & \cos 2\theta/\sin 2\theta \\  & 1 \end{pmatrix}\right)r^{e_{\infty}}\left(\begin{pmatrix} -1/\sin 2\theta & 0 \\ 0 & -\sin 2\theta \end{pmatrix}\right) \\
&\cdot r^{e_{\infty}}\left(\begin{pmatrix}  & 1 \\ -1 & \end{pmatrix}\right)r^{e_{\infty}}\left(\begin{pmatrix} 1 & \cos 2\theta/\sin 2\theta \\ & 1 \end{pmatrix}\right)\phi_{\infty}^0(x) \\
=& -\varepsilon\zeta(\theta)r^{e_{\infty}}\left(\begin{pmatrix} 1 & \cos 2\theta/\sin 2\theta \\  & 1 \end{pmatrix}\begin{pmatrix} -1/\sin 2\theta & 0 \\ 0 & -\sin 2\theta \end{pmatrix}\begin{pmatrix}  & 1 \\ -1 & \end{pmatrix}\right)f(x) \\
=& -\frac{\varepsilon\zeta(\theta)}{|\sin 2\theta|^{1/2}}e_{\infty}(x^2\cot 2\theta)\frac{\gamma(e_{\infty})^2}{\gamma(e_{\infty,-1/\sin 2\theta})}\widehat{f}\left(-\frac{x}{\sin 2\theta}\right) \\
=& -\frac{\varepsilon\zeta(\theta)}{|\sin 2\theta|^{\frac{1}{2}}}e^{(2+\varepsilon)\frac{\pi i}{4}+2\pi ix^2\cot 2\theta}\widehat{f}\left(-\frac{x}{\sin 2\theta}\right) \\
=& \frac{\varepsilon\zeta(\theta)}{|\sin 2\theta|^{\frac{1}{2}}}e^{-\frac{\varepsilon\pi i}{4}+2\pi ix^2\cot 2\theta}\widehat{f}\left(-\frac{x}{\sin 2\theta}\right).
\end{align*}

Now recall that, if $\varphi_z(x)=e^{-2\pi zx^2}$ for some $z\in\C$ with $\Re z>0$, then
\[
\widehat{\varphi}_z(x)=\frac{1}{\sqrt{z}}e^{-\frac{2\pi x^2}{z}}.
\]
In our case, we have
\[
z=1+i\cot 2\theta=\frac{\sin 2\theta-i\cos 2\theta}{\sin 2\theta}=\frac{1}{|\sin 2\theta|}e^{2i\theta-\frac{\varepsilon\pi i}{2}},
\]
and, according to our convention,
\[
\frac{1}{z}=(\sin 2\theta)^2(1+i\cot 2\theta), \qquad \sqrt{z}=\frac{\zeta(\theta)}{\sqrt{|\sin 2\theta|}}e^{i\theta-\frac{\varepsilon\pi i}{4}},
\]
so
\[
\widehat{f}(x)=\zeta(\theta)|\sin 2\theta|^{1/2}e^{\frac{\varepsilon\pi i}{4}-i\theta}e^{-2\pi x^2(\sin 2\theta)^2(1+i\cot 2\theta)}.
\]
Hence
\begin{align*}
r^{e_{\infty}}(\wt \kappa_{\theta})\phi_{\infty}^0(x)
=& \frac{\zeta(\theta)}{|\sin 2\theta|^{\frac{1}{2}}}e^{-\frac{\varepsilon\pi i}{4}+2\pi ix^2\cot 2\theta}\widehat{f}\left(-\frac{x}{\sin 2\theta}\right) \\
=& e^{-i\theta}e^{-2\pi x^2}=e^{-i\theta}\varphi_{\infty}^0(x). \qedhere
\end{align*}
\end{proof}

\subsection{The Nonarchimedean Weil Representation. I}

We denote  the characteristic function
of $\Z_p$ by $\phi_p^\circ.$
\begin{lem}\label{K p fixes phi p circ}
If $p>2$ and $a \in \Z_p^\times,$ then
$r^{e_{p,a}}(SL(2, \Z_p))$ fixes $\phi_p^\circ.$
\end{lem}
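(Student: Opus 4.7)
The plan is to exploit the generating set for $SL(2,\Z_p)$ from Lemma~\ref{lem: generators for K p}, verify the claim directly on each generator, and then propagate to the full group by a subgroup argument in $\SL(2,\Q_p)$.

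For the unipotent generator $n(b):=\bpm 1 & b\\ & 1 \epm$ with $b\in\Z_p$, the formula in Section~\ref{ss: local weil rep} gives
\[
[r^{e_{p,a}}(n(b),1)\phi_p^\circ](y)=e_p(aby^2)\phi_p^\circ(y),
\]
and since $aby^2\in\Z_p$ on the support $y\in\Z_p$, we have $e_p(aby^2)=1$, so $\phi_p^\circ$ is fixed. For the Weyl element $w:=\bpm &1\\ -1& \epm$, the action is $\gm(e_{p,a})\widehat{\phi_p^\circ}$. Proposition~\ref{prop:WeilRepresentationIntegralNormalization} gives $\alpha(e_{p,a})=|2a|_p^{1/2}=1$ since $2a\in\Z_p^\times$ when $p$ is odd and $a$ is a unit, and the Fourier integral
\[
\widehat{\phi_p^\circ}(x)=\int_{\Z_p}e_p(2axy)\,dy
\]
equals $1$ when $2ax\in\Z_p$ (i.e., when $x\in\Z_p$) and $0$ otherwise by orthogonality, so $\widehat{\phi_p^\circ}=\phi_p^\circ$. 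Proposition~\ref{prop:computation of gammas} with $r=0$ gives $\gm(e_{p,a})=1$; hence $(w,1)$ also fixes $\phi_p^\circ$.

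To propagate, let $S:=\{\wt g\in\SL(2,\Q_p):r^{e_{p,a}}(\wt g)\phi_p^\circ=\phi_p^\circ\}$; this is a subgroup, and by the above it contains $(w,1)$ and all $(n(b),1)$, hence the subgroup $G$ they generate. The projection $\pr(G)$ contains every generator from Lemma~\ref{lem: generators for K p}, so $\pr(G)=SL(2,\Z_p)$, while $(I,-1)\notin S$ because it acts as the scalar $-1$ on $\phi_p^\circ$. Therefore $\pr|_G$ is an isomorphism, realizing $SL(2,\Z_p)$ as a subgroup of $\SL(2,\Q_p)$ contained in $S$.

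The remaining point, and the main obstacle if the statement is read literally as concerning the distinguished lifts $(g,1)$, is to show that the above splitting coincides with $g\mapsto(g,1)$, equivalently that $\beta_p$ is trivial on $SL(2,\Z_p)\times SL(2,\Z_p)$ for $p>2$. From the formulas in Section~\ref{subsection: local metaplectic groups}, this reduces to the triviality of the Hilbert symbol on $\Z_p^\times\times\Z_p^\times$ for odd $p$ together with a check that the $s$-factor $(c,d)_p$, which only intervenes when $\ord(c)$ is odd, contributes trivially on products of the generators (tracked via the Iwasawa-style decompositions appearing in the proof of Lemma~\ref{lem: generators for K p}).
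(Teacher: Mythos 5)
Your proof is correct and follows the same route as the paper: reduce to the generators of Lemma~\ref{lem: generators for K p} and verify the action on each directly (the paper's proof is exactly this reduction, with the generator computations left as ``straightforward''). The one point you flag as remaining---triviality of $\beta_p$ on $SL(2,\Z_p)\times SL(2,\Z_p)$ for $p>2$, so that your splitting is $k\mapsto(k,1)$---is precisely \cite[Proposition 2.8]{Gelbart}, which the paper invokes in Section~\ref{ss: Global Metaplectic Group}, so it can simply be cited rather than re-checked.
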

\begin{proof}
By lemma \ref{lem: generators for K p}, it
suffices to check the assertion on the elements of the set
\eqref{generators}, which is straightforward.
\end{proof}

Take $\mu: \Z_p^\times \to \C^\times$ a nontrivial character,
and define $\phi_p^\mu = \mu \cdot \1_{\Z_p^\times}: \Q_p^\times
\to \C.$

\begin{prop}\label{prop:action of unscaled flip}
Let $p>2$, and let
\[
f=\min\{m\geq 1 \mid \mu(1+ p^f \Z_p)=1\}.
\]
Then
$$r^{e_p}\bpm 0&1\\ -1& 0 \epm.\phi_p^\mu (y)
= \frac{\mu^{-1}(2)\tau(\mu)}{p^f}\phi_p^{\mu^{-1}}
\left(yp^f\right),$$
where
$$
\tau(\mu) =  \sum_{{i = 1}\atop (p,i)=1}^{p^{f}-1}
\mu(i) e_p\left(\frac{i}{p^f}\right).$$
\end{prop}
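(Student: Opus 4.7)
The plan is to compute the left-hand side directly from the defining formulas of $r^{e_p}$ on the Weyl element, and to recognize the resulting integral as a Gauss sum.

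First, I would observe that since $p$ is odd and $1 = 1 \cdot p^0$ has even $p$-adic valuation, Proposition \ref{prop:computation of gammas} gives $\gamma(e_p) = 1$, and Proposition \ref{prop:WeilRepresentationIntegralNormalization} gives $\alpha(e_p) = |2|_p^{1/2} = 1$. Thus the definition of $r^{e_p}$ yields
\[
\Bigl[r^{e_p}\bpm 0 & 1 \\ -1 & 0 \epm \phi_p^\mu\Bigr](y) = \int_{\Q_p} \phi_p^\mu(x)\, e_p(2xy)\, dx = \int_{\Z_p^\times} \mu(x)\, e_p(2xy)\, dx.
\]
Since $2 \in \Z_p^\times$, the change of variable $x \mapsto 2^{-1}x$ preserves both $\Z_p^\times$ and $dx$, and pulls out a factor of $\mu^{-1}(2)$, reducing matters to evaluating the Gauss sum integral
\[
G(y) := \int_{\Z_p^\times} \mu(x)\, e_p(xy)\, dx.
\]

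Next I would decompose $\Z_p^\times$ into cosets modulo $1 + p^f\Z_p$, on each of which $\mu$ is constant (since $f$ is the conductor of $\mu$). This gives
\[
G(y) = \sum_{\substack{i=1 \\ (p,i)=1}}^{p^f-1} \mu(i)\, e_p(iy) \int_{p^f\Z_p} e_p(vy)\, dv.
\]
The inner integral equals $p^{-f}$ if $v_p(y) \ge -f$ (so that $p^f y \in \Z_p$ and the additive character is trivial on $p^f\Z_p$) and vanishes otherwise. Thus $G(y) = 0$ when $v_p(y) < -f$, matching the RHS.

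The two remaining cases are the heart of the argument. When $v_p(y) = -f$, write $y = c/p^f$ with $c \in \Z_p^\times$; the substitution $i \mapsto c^{-1}i$ (permuting the summation set) converts the sum into $\mu^{-1}(c)\,\tau(\mu)$, giving $G(y) = \mu^{-1}(yp^f)\tau(\mu)/p^f$, which, combined with the prefactor $\mu^{-1}(2)$, is exactly the claimed value. The step I expect to require the most care is showing that when $-f < v_p(y)$, the sum $\sum_i \mu(i) e_p(iy)$ vanishes — this is where the \emph{primitivity} of $\mu$ enters. The clean way is to group the sum according to cosets of $1 + p^{f-1}\Z_p$: writing $i = i_0(1 + p^{f-1}k)$ for $k = 0,\dots,p-1$, the factor $e_p(iy)$ becomes independent of $k$ (because $p^{f-1}y \in \Z_p$), so the $k$-sum reduces to $\mu(i_0)\sum_{k=0}^{p-1}\mu(1 + p^{f-1}i_0 k)$, which vanishes because $\mu$ restricted to $(1+p^{f-1}\Z_p)/(1+p^f\Z_p) \cong \Z/p\Z$ is a nontrivial character of this cyclic group (by the minimality of $f$).

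Assembling the three cases and the $\mu^{-1}(2)$ prefactor yields precisely $\frac{\mu^{-1}(2)\tau(\mu)}{p^f}\phi_p^{\mu^{-1}}(yp^f)$, since $\phi_p^{\mu^{-1}}(yp^f) = \mu^{-1}(yp^f)$ exactly when $v_p(y) = -f$ and vanishes otherwise.
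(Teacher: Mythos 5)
Your proof is correct and follows essentially the same route as the paper's: expand $\phi_p^\mu$ over cosets of $1+p^f\Z_p$, Fourier-transform each piece, cut down the support using triviality of $e_p$ on $\Z_p$ together with the primitivity of $\mu$, and identify the remaining function as $\mu^{-1}(2)\tau(\mu)p^{-f}\phi_p^{\mu^{-1}}(yp^f)$ via the substitution $i\mapsto c^{-1}i$. The only difference is cosmetic — you spell out the coset argument for the vanishing on $p^{-f}\Z_p\setminus p^{-f}\Z_p^\times$ that the paper only sketches, which is a welcome addition.
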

\begin{proof}
One may decompose $\phi_p^\mu$ as a linear
combination of characteristic functions:
$$
\phi_p^\mu = \sum_{{i = 1}\atop (p,i)=1}^{p^{f}-1}
\mu(i) \1_{i+ p^{f}\Z_p},
$$
then
$$
r^{e_p}\bpm 0&1\\ -1& 0 \epm.\phi_p^\mu (y)
=
\frac1{p^f}
\sum_{{i = 1}\atop (p,i)=1}^{p^{f}-1}
\mu(i) e_p(2iy) \1_{p^{-f}\Z_p}(y)
=\frac{\mu^{-1}(2)}{p^f}
\sum_{{i = 1}\atop (p,i)=1}^{p^{f}-1}
\mu(i) e_p(iy) \1_{p^{-f}\Z_p}(y)
,
$$
where we have applied our previous results that
\[
\gamma(e_p)=1, \qquad \alpha(e_p)=1.
\]
Obviously the right hand side vanishes
if $y \notin p^{-f}\Z_p.$  Moreover,
if $y \notin p^{-f} \Z_p^\times,$ then $i \mapsto e_p(iy)$ is constant
on $1+p^{f-1}\Z,$ which causes the sum against $\mu$ to
vanish.  Thus the support of
$r^{e_p}\bpm 0&1\\ -1& 0 \epm.\phi_p^\mu$ is
precisely $p^{-f} \Z_p^\times.$
Further,
a change of variables in $i$ shows that
$$
r^{e_p}\bpm 0&1\\ -1& 0 \epm.\phi_p^\mu (yy')
=\mu(y')^{-1}
r^{e_p}\bpm 0&1\\ -1& 0 \epm.\phi_p^\mu (y).
$$
It follows that the function
\[
y \mapsto r^{e_p}\bpm 0&1\\ -1& 0 \epm.\phi_p^\mu \left(\frac y{p^f}\right)
\]
is a scalar multiple of $\phi_p^{\mu^{-1}},$ and the
scalar is easily seen to be the value at $\mu^{-1}(2)\tau(\mu)/p^f$.
\end{proof}
\begin{cor}\label{action of flip}
With notation as before, we have
$$r^{e_p}\left(\bpm 0&p^{-f}\\ -p^{f}& 0 \epm\right).\phi_p^\mu
= \left(\frac{-1}{p}\right)^f\frac{\tau(\mu)\mu^{-1}(2)}{p^{f/2}\gm(e_{p,p^{-f}})} \phi_p^{\mu^{-1}}.$$
\end{cor}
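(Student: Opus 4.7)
The plan is to reduce this to Proposition \ref{prop:action of unscaled flip} by a convenient factorization of the matrix $\bpm 0 & p^{-f} \\ -p^f & 0 \epm$ and then track the metaplectic cocycle correction. Specifically, I would write
\[
\bpm 0 & p^{-f} \\ -p^f & 0 \epm = \bpm p^{-f} & 0 \\ 0 & p^f \epm \bpm 0 & 1 \\ -1 & 0 \epm,
\]
so that $r^{e_p}$ applied to the left-hand side (with trivial $\zeta$-coordinate) equals $\beta_p(g_1,g_2)$ times the composition of the two factor operators, since $(g_1,1)(g_2,1) = (g_1g_2, \beta_p(g_1,g_2))$ in $\wt{SL}(2,\Q_p)$ and $r^{e_p}(g, \zeta) = \zeta \, r^{e_p}(g,1).$

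First, I apply Proposition \ref{prop:action of unscaled flip} to compute $r^{e_p}\bpm 0 & 1 \\ -1 & 0\epm.\phi_p^\mu(y) = \frac{\mu^{-1}(2)\tau(\mu)}{p^f}\phi_p^{\mu^{-1}}(yp^f).$ Next, I apply the explicit diagonal formula with $a = p^{-f}$, using $|p^{-f}|_p^{1/2} = p^{f/2}$ and the fact that $\gamma(e_p)=1$ (which follows from Proposition \ref{prop:computation of gammas} at $r=0$). The net effect on the argument is $p^{-f}\cdot p^f = 1$, so up to the constant $p^{f/2}/\gamma(e_{p,p^{-f}})$ the function collapses back to $\phi_p^{\mu^{-1}}(y).$ Multiplying the two prefactors gives
\[
r^{e_p}(g_1,1)r^{e_p}(g_2,1).\phi_p^\mu = \frac{\mu^{-1}(2)\tau(\mu)}{p^{f/2}\gamma(e_{p,p^{-f}})}\phi_p^{\mu^{-1}}.
\]

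Finally, I compute the cocycle $\beta_p(g_1,g_2).$ From the definitions $x(g_1)=p^f,$ $x(g_2)=-1,$ $x(g_1g_2)=-p^f,$ and $s(g_1)=s(g_2)=s(g_1g_2)=1$ (each has $cd=0$), so $\beta_p(g_1,g_2) = (p^f,-1)_p \cdot (p^f,-p^f)_p.$ Using $(p,-1)_p = \LS{-1}{p}$ (odd $p$) together with $(a,-a)_p=1$, I expand $(p^f,-p^f)_p = \LS{-1}{p}^f \cdot (p,p)_p^{f^2} = \LS{-1}{p}^{f+f^2} = 1$ because $f(f+1)$ is even, leaving $\beta_p(g_1,g_2) = \LS{-1}{p}^f.$ Combining this with the previous display yields the stated formula.

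The only step that requires any real care is the Hilbert-symbol calculation for $\beta_p$; the cancellation $f+f^2 \equiv 0 \pmod 2$ is what produces a clean $\LS{-1}{p}^f$ rather than a messier expression, and this is the one place where a sign error would be easy to make.
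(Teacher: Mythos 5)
Your proof is correct and takes essentially the same route as the paper: the identical factorization $\bpm 0&p^{-f}\\-p^f&0\epm=\bpm p^{-f}&0\\0&p^f\epm\bpm 0&1\\-1&0\epm$, the same cocycle evaluation $\beta_p(g_1,g_2)=\LS{-1}{p}^f$, and the same composition of Proposition~\ref{prop:action of unscaled flip} with the diagonal formula. The only cosmetic difference is in the Hilbert-symbol bookkeeping, where the paper collapses $(p^f,-1)_p(p^f,-p^f)_p$ to $(p^f,p^f)_p$ directly while you expand $(p^f,-p^f)_p$ and use the parity of $f+f^2$.
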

\begin{proof}
By definition we have
\[
\beta_p\left(\bpm p^{-f}&0 \\ 0 & p^f \epm ,\; \bpm 0& 1 \\ -1& 0 \epm
\right)_p
= (p^f, -1)_p(p^f, -p^f)_p
= (p^f, p^f)_p = \left(\frac{-1}{p}\right)^f,
\]
so in $\SL(2,\Q_p)$ we have
\[
\left(\bpm p^{-f}&0 \\ 0 & p^f \epm ,1\right)\left(\bpm 0& 1 \\ -1& 0 \epm,1 \right)
=
\left(\bpm 0 & p^{-f}\\ -p^f & 0 \epm ,\left(\frac{-1}{p}\right)^f \right).
\]
Hence it follows immediately from Proposition \ref{prop:action of unscaled flip} and the definition
of $r^{e_p}$ on diagonal elements that
\begin{align*}
r^{e_p}\left(\bpm 0&p^{-f}\\ -p^{f}& 0 \epm\right).\phi_p^\mu
=& \left(\frac{-1}{p}\right)^fr^{e_p}\left(\bpm p^{-f}&0 \\ 0 & p^f \epm\right)
r^{e_p}\left( \bpm 0& 1 \\ -1& 0 \epm \right)\phi_p^\mu \\
=& \left(\frac{-1}{p}\right)^f\frac{\tau(\mu)\mu^{-1}(2)}{p^{f/2}\gm(e_{p,p^{-f}})}
\phi_p^{\mu^{-1}}. \qedhere
\end{align*}
\end{proof}
Note that $\mu$ factors through $(\Z/p^f \Z)^\times$ and that
$\tau(\mu)$ is the Gauss sum of
the (primitive) Dirichlet character mod $p^f$ which it induces. Therefore, $\frac{\mu^{-1}(2)\tau(\mu)}{\gm(e_{p,p^{-f}})p^{f/2}}$ is a root of unity.

\subsection{The Nonarchimedean Weil Representation. II}\label{sec: the subspace rp}

Let $p$ be a prime, $M\geq 1$, and
$$
K_p^{(M)}:= \bpm 1 & \\ & M \epm SL(2, \Z_p) \bpm 1& \\ & M^{-1} \epm.
$$
Note that $K_p^{(M)}$ depends only on the
$p$-adic valuation of $M.$ By Lemma~\ref{lem: generators for K p}, the group $K_p^{(M)}$ is generated by
\[
\left\{\begin{pmatrix} & 1/M \\ -M & \end{pmatrix}\right\}\bigcup\left\{\begin{pmatrix} 1 & x/M \\ & 1 \end{pmatrix} \mid x\in\Z_p\right\}.
\]
In addition, let $\wt K_p^{(M)}$ denote the
preimage of $K_p^{(M)}$ in $\SL(2, \Q_p).$

  In this section, for each prime
$p$ and for suitable values of $M,$ we study a finite dimensional
subspace of $\c S(\Q_p)$ which is invariant under the
action of $\wt K_p^{(M)}$.
Specifically, when $p=2$ we consider
$\wt K_p^{(8)};$ for odd $p$
we consider $\wt K_p^{(p)}.$ As our discussions and conclusions change dramatically according to whether $p>3$, $p=3$ or $p=2$. Hence we will consider these three cases separately.

\subsubsection{The case $p>3$}\label{subsec: case of odd p}

In order to work explicitly we introduce some notation
from elementary linear algebra.
If $V$ is a complex vector space of finite
dimension $n$, $B = (\beta_1, \dots, \beta_n)$
is an ordered basis for $V,$ and $v$ is a vector
in $V$ then we write $[v]_B$ for the coordinates
of $v$ relative to $B.$ Thus
$$
[v]_B= \bbm x_1\\ \vdots \\ x_n\ebm
\iff \sum_{i=1}^n x_i \beta_i = v.
$$
By identifying $B$ with the row vector $\bbm \beta_1 & \dots  & \beta_n\ebm,$ we may write this succinctly as
$B \cdot [v]_B = v.$
Similarly, if $T:V \to V$ is a linear operator, then
$[T]_B \in \Mat_{n\times n}(\C)$ is the matrix satisfying
$$
[T]_B [v]_B= [Tv]_B \qquad ( \forall v \in V).
$$
Finally, if $B_1$ and $B_2$ are two ordered bases of the
same space, then $_{B_1}c_{B_2}$ is the change-of-basis
matrix satisfying
$$
_{B_1}c_{B_2} [v]_{B_2} = [v]_{B_1},\qquad ( \forall v \in V).
$$

Now let $p>3$ and consider the $\frac{p+1}{2}$-dimensional vector space
\[
V = \Span\left( \left\{ \phi_p^{\xi^2}\mid \xi: ( \Z/p\Z)^\times \to \C\right\}\cup\{\1_{p\Z_p}\}\right)
\]
For our later discussions, it is convenient to construct two other bases for $V$.

For the first basis of $V$, recall that if $p$ is odd then $x \in \Z_p$ is
a square if and only if it is a square modulo $p.$
Now let
$
\1_{\boxed{i}}
$
denote the characteristic function of
$\{ x \in \Z_p: x^2 \equiv i^2 \pmod{p}\}.$
  For example, we have
$$\1_{\boxed{0}} = \1_{p\Z_p}, \qquad \1_{\boxed{1}} = \1_{1+p \Z_p} + \1_{-1+p\Z_p}.$$
Obviously
\[
B_1 = ( \1_{\boxed 1}, \dots, \1_{\boxed{\frac{p-1}2}}, \1_{\boxed{0}})
\]
forms a basis for $V$.

For the second basis of $V$, write $\mu_{\frac{p-1}2}$ for the set
of $(p-1)/2^{\text{th}}$ roots of unity.
If $\psi$ is any Dirichlet character mod $p$,
then the image of $\psi^2$ is in $\mu_{\frac{p-1}2}.$
We parametrize the set of such characters.
Fix a generator $g$ for $(\Z/p\Z)^\times$, and define the character
\[
\psi_j: (\Z/p\Z)^\times \to \mu_{\frac{p-1}2}, \qquad g\mapsto e\left(\frac{2j}{p-1}\right).
\]
Now, our second ordered basis
is
$$
B_2 = ( \phi_p^{\psi_1}, \dots, \phi_p^{\psi_{\frac{p-3}2}}, \phi_p^\circ,
\1_{\boxed{0}}).
$$
\begin{prop}
The Weil representation
$r_p^{e_p}(\wt K_p^{(p)})$ of the group $\wt K_p^{(p)}$ preserves the vector space $V.$  Moreover, we have
\begin{equation}\label{action of upper triangular element on V}
\left[
\left.
r_p^{e_p}\bpm 1 & \frac ap \\ & 1 \epm \right|_{V}\right]_{B_1}
= \bpm e_p(\frac ap) &&&&\\ & e_p(4 \frac{a}p) &&&\\ &&\ddots && \\
&&&e_p\left( \left( \frac{p-1}{2}\right)^2 \frac ap\right) &\\
&&&&1\epm\qquad( a \in \Z_p),\end{equation}
\begin{equation}\label{action of diagonal element on V}
\left[
\left.
r_p^{e_p}\bpm a & \\ & a^{-1} \epm \right|_{V}\right]_{B_2}
=
\bpm \psi_1(a) &&&&\\ &\ddots &&\\&&\psi_{\frac{p-3}{2}}(a) &&\\
&&&1&\\&&&&1\epm
\qquad ( a \in \Z_p^\times),
\end{equation}
\begin{equation}
\label{action of flip on V}
\left[
\left.
r_p^{e_p}\bpm  &p^{-1} \\ -p&  \epm \right|_{V}\right]_{B_2}
= \frac{1}{\varepsilon_p\sqrt{p}} \bpm
&&\tau(\psi_{\frac{p-3}2})\psi_{\frac{p-3}2}^{-1}(2)&&\\
&\iddots &&&\\
\tau(\psi_{1})\psi_{1}^{-1}(2)&&&&\\
&&&& 1\\
&&&p&
\epm. \end{equation}
\end{prop}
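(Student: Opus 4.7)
The plan is to verify the proposition by checking each generator of $\wt K_p^{(p)}$ and then reading off the matrix in each case. By Lemma \ref{lem: generators for K p} conjugated by $\bpm 1 & \\ & p \epm$, the group $K_p^{(p)}$ is generated by $\bpm 0 & p^{-1} \\ -p & 0 \epm$ together with the unipotents $\bpm 1 & a/p \\ & 1 \epm$ for $a \in \Z_p$; the metaplectic preimage adds only the central $(I,-1)$, which acts as multiplication by $-1$. Hence it suffices to compute the action of these two families on $V$, and doing so will simultaneously prove invariance and produce the formulas \eqref{action of upper triangular element on V} and \eqref{action of flip on V}. The diagonal formula \eqref{action of diagonal element on V} is a consequence but I would treat it separately since its matrix in $B_2$ is nearly immediate.

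For \eqref{action of upper triangular element on V} I would apply directly $[r^{e_p}(\bpm 1 & a/p \\ & 1 \epm).\vph](x) = e_p(ax^2/p)\vph(x)$. On the support of $\1_{\boxed i}$ one has $x^2 - i^2 \in p\Z_p$, so $ax^2/p \equiv ai^2/p \pmod{\Z_p}$ and $e_p(ax^2/p) = e_p(ai^2/p)$ is constant; for $i=0$ the support is $p\Z_p$ and the eigenvalue is $1$. This gives the diagonal matrix in $B_1$ and shows each $\1_{\boxed i}$, hence $V$, is preserved. For \eqref{action of diagonal element on V}, Proposition \ref{prop:computation of gammas} gives $\gm(e_{p,a}) = 1 = \gm(e_p)$ when $a \in \Z_p^\times$ and $p \neq 2$, so $r^{e_p}(\bpm a & \\ & a^{-1} \epm)$ reduces to $\vph \mapsto \vph(a\,\cdot)$. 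Evaluating on each vector of $B_2$ yields $\phi_p^{\psi_j} \mapsto \psi_j(a)\phi_p^{\psi_j}$, $\phi_p^\circ \mapsto \phi_p^\circ$, $\1_{\boxed 0} \mapsto \1_{\boxed 0}$.

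For \eqref{action of flip on V} I would split the basis $B_2$ into two blocks. For the first $(p-3)/2$ vectors, Corollary \ref{action of flip} with $\mu = \psi_j$ and $f = 1$ gives a scalar multiple of $\phi_p^{\psi_j^{-1}} = \phi_p^{\psi_{(p-1)/2-j}}$, producing the antidiagonal pattern in the top-left block; the prefactor $\left(\frac{-1}{p}\right)\tau(\psi_j)\psi_j^{-1}(2)/(\sqrt p\,\gm(e_{p,p^{-1}}))$ rewrites as $\tau(\psi_j)\psi_j^{-1}(2)/(\varepsilon_p\sqrt p)$ after a short case analysis using Proposition \ref{prop:computation of gammas} for $p \equiv 1$ and $p \equiv 3 \pmod 4$ separately. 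For the remaining two vectors $\phi_p^\circ$ and $\1_{\boxed 0}$, Corollary \ref{action of flip} does not apply directly, so I would compute from scratch: from $\alpha(e_p) = 1$ and direct integration one gets $\widehat{\1_{\Z_p}} = \1_{\Z_p}$ and $\widehat{\1_{p\Z_p}} = p^{-1}\1_{p^{-1}\Z_p}$, after which the outer diagonal $\bpm p^{-1} & \\ & p \epm$ (with cocycle sign $\left(\frac{-1}{p}\right)$, identical to that in the proof of the corollary) scales both back into $V$, yielding $\phi_p^\circ \mapsto (\sqrt p/\varepsilon_p)\1_{\boxed 0}$ and $\1_{\boxed 0} \mapsto (1/(\varepsilon_p\sqrt p))\phi_p^\circ$, matching the lower-right $2\times 2$ block of \eqref{action of flip on V}.

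The main obstacle is bookkeeping rather than any deep point: one must verify the identity $\left(\frac{-1}{p}\right)/\gm(e_{p,p^{-1}}) = 1/\varepsilon_p$ by splitting on $p \bmod 4$, and keep straight the anti-diagonal indexing $\psi_j \leftrightarrow \psi_{(p-1)/2-j}$ in $B_2$. A conceptual subtlety worth flagging is that the plain Fourier transform $\bpm & 1 \\ -1 & \epm$ alone does not preserve $V$ (it sends $\1_{p\Z_p}$ to $p^{-1}\1_{p^{-1}\Z_p}$, whose support escapes $\Z_p$), but the rescaled flip $\bpm & p^{-1} \\ -p & \epm$ does, because the diagonal $\bpm p^{-1} & \\ & p \epm$ exactly compensates for the dilation coming from the Fourier transform.
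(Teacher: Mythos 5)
Your proposal is correct and follows essentially the same route as the paper: verify the action on the generators of $K_p^{(p)}$ (the unipotents $\bspm 1 & a/p \\ & 1\espm$ and the scaled flip $\bspm & p^{-1} \\ -p & \espm$), reading off \eqref{action of upper triangular element on V} from constancy of $e_p(ax^2/p)$ on each class $\1_{\boxed{i}}$, \eqref{action of diagonal element on V} from the triviality of $|a|_p$ and the $\gamma$-factors, and \eqref{action of flip on V} from Corollary \ref{action of flip} for the $\phi_p^{\psi_j}$ plus a direct Fourier computation for $\phi_p^\circ$ and $\1_{\boxed 0}$. Your extra details (the $p\bmod 4$ case check of $\bigl(\tfrac{-1}{p}\bigr)/\gamma(e_{p,p^{-1}})=1/\varepsilon_p$ and the remark that only the rescaled flip preserves $V$) merely flesh out steps the paper labels ``direct calculation.''
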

\begin{proof}
The identity \eqref{action of upper triangular element on V}
follows immediately from the definitions of $r_p^{e_p}$
and $\1_{\boxed{i}}.$
The identity \eqref{action of diagonal element on V}
follows immediately from the definitions of $r_p^{e_p}$
and the elements of $B_2,$ along with the fact that
for $a \in \Z_p^\times$ we have
$$|a|_p = \gm(e_p) = \gm( e_{p,a}) = 1.$$
To prove \eqref{action of flip on V}, note
that $\psi_j^{-1} = \psi_{\frac{p-1}2-j}.$  Therefore,
by Corollary \ref{action of flip} we have
$$
r_p^{e_p}\bpm  &p^{-1} \\ -p&  \epm
\phi_p^{\psi_j} =  \left(\frac{-1}{p}\right)\frac{\tau(\psi_j)\psi_j^{-1}(2)}{p^{1/2}\gm(e_{p,p^{-1}})} \phi_p^{\psi_{\frac{p-1}2-j}}.
$$
Further, direct calculation shows that
\[
\gamma(e_{p,p^{-1}})=\frac{1}{\varepsilon_p}\left(\frac{-1}{p}\right)
\]
and that
$$r_p^{e_p}\bpm  &p^{-1} \\ -p&  \epm
\phi_p^{\circ}
= \left(\frac{-1}{p}\right)\frac{\sqrt{p}}{\gm(e_{p,p^{-1}})} \1_{\boxed{0}}
= \frac{\sqrt{p}}{\varepsilon_p} \1_{\boxed{0}},
$$
$$
r_p^{e_p}\bpm  &p^{-1} \\ -p&  \epm\1_{\boxed{0}}
= \left(\frac{-1}{p}\right)\frac{1}{\sqrt{p}\gm(e_{p,p^{-1}})} \phi_p^{\circ}
= \frac{1}{\varepsilon_p\sqrt{p}} \phi_p^{\circ}.
$$
The identity \eqref{action of flip on V} follows. The rest
of the proposition follows from these explicit results,
since the elements studied generate $K_p^{(p)}.$
\end{proof}
\begin{defn}\label{defOfVarRho}
Set
$\varrho_{p,B_1}(\gamma) = \left[
\left.
r_p^{e_p}(\gamma)\right|_{V}\right]_{B_1},$  where $V$ and $B_1$ are
defined as above.
\end{defn}
For later use we record the change of basis
matrix from $B_2$ to $B_1.$  It is given by
\begin{equation}\label{change of basis}
_{B_1}c_{B_2}=
\bpm C_0 & 1 & 0 \\ 0 & 1 & 1 \epm
\end{equation}
where $C_0$ is a $\frac{p-1}2\times \frac{p-3}2$ block
with $i,j$ entry equal to $\psi_j(i).$ The $1$ and $0$ in
the first row of $_{B_1}c_{B_2}$ indicate a column of all ones and a
column of all zeros.  The $0$ below $C_0$ is a row of $0$'s.

\subsubsection{The special case $p=3$}
The case $p=3$ is different from the above case in that the only nontrivial Dirichlet character modulo 3 is an odd quadratic character, so our interest is not in the action of the Weil representation upon the space $V$ constructed above but upon this odd quadratic character. The
key observation is that the function
\[
\Z_3^\times\to\C^\times, \qquad x\mapsto e_3\left( \frac{a}{3}x^2\right)
\]
is constant for every $a\in\Z_3$.

\begin{thm}\label{K 3 3 acts on phi chi 3 by xi 3}
Let ${\chi_3}$ be the unique nontrivial quadratic
Dirichlet character modulo $3.$
Then the span of $\phi_3^{\chi_3}$ is
fixed by  $ \wt K_3^{(3)},$
and
there is a character
$\xi_3: \wt K_3^{(3)} \to \C^\times$ such that
$$
r_3^{e_3}(\wt k).\phi_3^{\chi_3} = \xi_3(\wt k) \cdot \phi_3^{\chi_3}
\qquad ( \forall \wt k \in \wt K_3^{(3)}).
$$
On elementary generators, it is given by
$$
\xi_3\left(
\bpm 1& \frac a3\\ & 1\epm, 1
\right)= e_3\left( \frac a3\right), \qquad ( a \in \Z_3)
$$
$$
\xi_3\left(
\bpm a&\\&a^{-1}\epm, 1
\right)= \chi_3(a), \qquad ( a \in \Z_3^\times)
$$
$$
\xi_3\left(
\bpm &1/3\\ -3 & \epm, 1
\right) = 1, \qquad \xi_3( I_2, -1) = -1.
$$
\end{thm}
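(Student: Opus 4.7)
The plan is to verify the asserted action on the four specified elements and then exploit the fact that (appropriate lifts of) these elements generate $\wt K_3^{(3)}$. By Lemma~\ref{lem: generators for K p} (conjugated by $\mathrm{diag}(1,3)$), the group $K_3^{(3)}$ is generated by the upper unipotents $\bpm 1 & a/3 \\ & 1 \epm$ $(a \in \Z_3)$ together with the flip $\bpm & 1/3 \\ -3 & \epm$; the split torus elements with $a \in \Z_3^\times$ arise (via the identity in the proof of that lemma) from these. Hence $\wt K_3^{(3)}$ is generated by any lifts of these matrices together with $(I_2, -1)$. Once each generator is shown to scale $\phi_3^{\chi_3}$, the cocycle relation for $r_3^{e_3}$ forces $\xi_3$ to extend to all of $\wt K_3^{(3)}$ as a homomorphism $\wt K_3^{(3)} \to \C^\times$.

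The easy generators are the first two. For the unipotent, the key observation is that every $x \in \Z_3^\times$ satisfies $x^2 \equiv 1 \pmod 3$, so
$$\bigl[r_3^{e_3}\bigl(\unip{a/3},1\bigr).\phi_3^{\chi_3}\bigr](x) = e_3\!\left(\tfrac{ax^2}{3}\right)\phi_3^{\chi_3}(x) = e_3\!\left(\tfrac{a}{3}\right)\phi_3^{\chi_3}(x).$$
For the diagonal element with $a \in \Z_3^\times$, Proposition~\ref{prop:computation of gammas} gives $\gamma(e_3) = \gamma(e_{3,a}) = 1$ and $|a|_3 = 1$, so the action is simply $\phi_3^{\chi_3}(ax) = \chi_3(a)\phi_3^{\chi_3}(x)$.

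The main obstacle is the flip, which requires careful bookkeeping of Gauss sums and $\gamma$-factors. Applying Corollary~\ref{action of flip} with $\mu = \chi_3$ and $f = 1$, and using $\chi_3^{-1} = \chi_3$, we get
$$r_3^{e_3}\!\adiag{1/3}{-3}.\phi_3^{\chi_3} \;=\; \left(\tfrac{-1}{3}\right)\frac{\tau(\chi_3)\chi_3^{-1}(2)}{\sqrt{3}\,\gamma(e_{3,3^{-1}})}\,\phi_3^{\chi_3}.$$
I would compute the pieces separately: $\bigl(\tfrac{-1}{3}\bigr) = -1$; $\chi_3(2) = -1$;
$$\tau(\chi_3) = e_3\!\left(\tfrac{1}{3}\right) - e_3\!\left(\tfrac{2}{3}\right) = e^{-2\pi i/3} - e^{-4\pi i/3} = -2i\sin\tfrac{2\pi}{3} = -i\sqrt{3};$$
and, from Proposition~\ref{prop:computation of gammas} with $p=3$, $\alpha = 1$, $r = -1$, we obtain $\gamma(e_{3,3^{-1}}) = i\bigl(\tfrac{-1}{3}\bigr) = -i$. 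Multiplying these,
$$\frac{(-1)(-i\sqrt{3})(-1)}{\sqrt{3}\cdot(-i)} \;=\; \frac{-i\sqrt{3}}{-i\sqrt{3}} \;=\; 1,$$
confirming $\xi_3\bigl(\adiag{1/3}{-3},1\bigr) = 1$.

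Finally, $\xi_3(I_2,-1) = -1$ is built into the definition of $r_3^{e_3}$ on the kernel of $\on{pr}$. To conclude, the span of $\phi_3^{\chi_3}$ is invariant under a generating set of $\wt K_3^{(3)}$, hence under the whole group; the induced scalars form the character $\xi_3$ by multiplicativity, and its values on the elementary generators are exactly those computed. The only delicate step is the flip, where the combined minus signs from the quadratic residue symbol, $\chi_3(2)$, $\tau(\chi_3)$, and $\gamma(e_{3,3^{-1}})$ must conspire to cancel.
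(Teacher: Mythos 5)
Your proposal is correct and follows essentially the same route as the paper: reduce to the generators of $\wt K_3^{(3)}$, use $x^2\equiv 1\pmod 3$ for the unipotents, the triviality of the $\gamma$-factors for the torus, and Corollary~\ref{action of flip} with $\tau(\chi_3)=-i\sqrt 3$, $\chi_3(2)=-1$, $\gamma(e_{3,1/3})=-i$ for the flip. Your arithmetic for the flip matches the paper's cited constants, and your extra remarks on why the generators suffice and why the scalars form a character are correct (and slightly more explicit than the paper's own proof).
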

\begin{proof}
The fact that $\left(
\bpm 1& \frac a3\\ & 1\epm, 1
\right)$ acts by $e_3( \frac a3)$ follows
from the fact that $x^2 \equiv 1 \pmod{3}$
for all $x \in \Z_3^{\times}.$
To study the effect of
$\left(
\bpm a&\\&a^{-1}\epm, 1
\right),$
one only has to invoke the definition and check that
$\gm(e_3)=\gm(e_{3,a})=1$
by Proposition \eqref{prop:computation of gammas}.
The action
of $\xi_3\left(
\bpm &1/3\\ -3 & \epm, 1
\right) $ is given in Corollary \ref{action of flip} with
\[
\tau(\chi_3)=-i\sqrt{3}, \qquad \chi_3(2)=-1, \qquad \gm(e_3)=1, \qquad \gamma(e_{3,1/3})=-i. \qedhere
\]
\end{proof}
\subsubsection{The case $p=2$}
\begin{thm}\label{thm: k 2 8 acts on ph2 chi 2 by xi 2}
Let ${\chi_2}$ denote the unique nontrivial quadratic
Dirichlet character modulo $4.$
Then the span of $\phi_2^{\chi_2}$ is
fixed by  $ \wt K_2^{(8)},$ and there is a character
$\xi_2: \wt K_2^{(8)} \to \C^\times$ such that
$$
r_2^{e_2}(\wt k).\phi_2^{\chi_2} = \xi_2(\wt k) \cdot \phi_2^{\chi_2}
\qquad ( \forall \wt k \in \wt K_2^{(8)}).
$$
On elementary generators, it is given by
$$
\xi_2\left(
\bpm 1& \frac a8\\ & 1\epm, 1
\right)= e_2\left(\frac a8\right), \qquad ( a \in \Z_2)
$$
$$
\xi_2\left(
\bpm a&\\&a^{-1}\epm, 1
\right)=
-i\varepsilon_{-a}
\chi_2(a)
, \qquad ( a \in \Z_2^\times)
$$
$$
\xi_2\left(
\bpm &1/8\\ -8 & \epm, 1
\right) = -\frac{1+i}{\sqrt{2}}, \qquad
\xi_2( I_2, -1) = -1.
$$
\end{thm}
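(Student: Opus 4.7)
The strategy parallels the proof of Theorem \ref{K 3 3 acts on phi chi 3 by xi 3}: I will verify that $\phi_2^{\chi_2}$ is an eigenvector of $r_2^{e_2}(\wt g)$ for each $\wt g$ in a set of generators of $\wt K_2^{(8)}$, and then invoke the fact that $r_2^{e_2}$ is an honest homomorphism on $\SL(2,\Q_2)$ to conclude that this extends to all of $\wt K_2^{(8)}$ with a multiplicative eigenvalue function $\xi_2$. By Lemma \ref{lem: generators for K p}, $K_2^{(8)}$ is generated (after conjugation by $\bspm 1 & \\ & 8\espm$) by the unipotents $\bspm 1 & a/8 \\ & 1\espm$ ($a \in \Z_2$), the torus elements $\bspm a & \\ & a^{-1} \espm$ ($a \in \Z_2^\times$), and the flip $\bspm 0 & 1/8 \\ -8 & 0 \espm$; the preimage $\wt K_2^{(8)}$ is generated by the lifts $(g,1)$ of these elements together with the central element $(I_2,-1)$.

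Three of the four cases are immediate. The central element acts by $-1$ by the definition of $r^{e_2}$. For the unipotent, the formula $[r_2^{e_2}\bspm 1 & a/8 \\ & 1\espm \phi](x) = e_2(ax^2/8) \phi(x)$, combined with the elementary congruence $x^2 \equiv 1 \pmod 8$ valid for every $x \in \Z_2^\times$, gives $e_2(ax^2/8) = e_2(a/8)$ on $\Z_2^\times = \on{supp}(\phi_2^{\chi_2})$. For the torus element, $\phi_2^{\chi_2}(ax) = \chi_2(a)\phi_2^{\chi_2}(x)$ for $a \in \Z_2^\times$ by multiplicativity of $\chi_2$, and Proposition \ref{prop:computation of gammas} applied with $r=0$ gives $\gamma(e_{2,a}) = \frac{1+i}{\sqrt 2}\varepsilon_{-a}^{-1}$, whence $\gamma(e_2)/\gamma(e_{2,a}) = \varepsilon_{-a}/\varepsilon_{-1} = -i\varepsilon_{-a}$, matching the claimed scalar.

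The essential calculation is the flip. Corollary \ref{action of flip} does not apply verbatim because the conductor of $\chi_2$ is $4$ rather than $8$, so I will instead use the identity
\[
\left(\bspm 0 & 1/8 \\ -8 & 0\espm,1\right) = \left(\bspm 1/8 & \\ & 8\espm,1\right)\left(\bspm 0 & 1 \\ -1 & 0\espm,1\right)
\]
in $\SL(2,\Q_2)$, after a short direct check that $\beta_2$ is trivial on this pair. Next, compute $\widehat{\phi_2^{\chi_2}}$ by decomposing $\phi_2^{\chi_2} = \1_{1+4\Z_2} - \1_{3+4\Z_2}$; using $\alpha(e_2) = 1/\sqrt{2}$, the two integrals collapse to $\frac{1}{4\sqrt 2}(e_2(2x) - e_2(-2x)) \1_{\frac{1}{8}\Z_2}(x)$. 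Multiplying by $\gamma(e_2) = \frac{1-i}{\sqrt 2}$ yields the action of the unscaled flip. Finally, apply the diagonal factor, which contributes $|1/8|_2^{1/2}\gamma(e_2)/\gamma(e_{2,1/8}) = 2\sqrt 2$ (since Proposition \ref{prop:computation of gammas} with $\alpha=1, r=-3$ and $\left(\frac{2}{-1}\right) = 1$ forces $\gamma(e_{2,1/8}) = \gamma(e_2)$) and replaces $x$ by $x/8$, collapsing the support from $\frac 18 \Z_2$ onto $\Z_2$. A case analysis of $e_2(x/4) - e_2(-x/4)$ on the two cosets $1 + 4\Z_2$ and $3 + 4\Z_2$ (using the explicit fractional parts $\{1/4\}_2 = \{-3/4\}_2 = 1/4$ and $\{3/4\}_2 = \{-1/4\}_2 = 3/4$) produces $\mp 2i$, vanishing on $2\Z_2$, and recovers the asserted eigenvalue $-\frac{1+i}{\sqrt 2}$ on $\phi_2^{\chi_2}$.

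The main obstacle is this flip computation: it requires simultaneously tracking the $2$-adic Fourier transform of the character-twisted indicator, the Weil $\gamma$-factors at both $1$ and $1/8$, the Hilbert-symbol sign in $\beta_2$, and the coset analysis inside $\Z_2^\times$. The clean eighth root of unity $-\frac{1+i}{\sqrt 2}$ emerges only because of the fortunate coincidence $\gamma(e_{2,1/8}) = \gamma(e_2)$ supplied by Proposition \ref{prop:computation of gammas}. The need to use level $8$ rather than the conductor $4$ of $\chi_2$ is dictated by the unipotent step, where $x^2 \equiv 1 \pmod 8$ is the sharpest congruence available on $\Z_2^\times$.
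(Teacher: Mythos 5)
Your proposal is correct and follows essentially the same route as the paper: verify the eigenvector property on the generators $\bspm 1 & a/8 \\ & 1\espm$, $\bspm a & \\ & a^{-1}\espm$, $\bspm 0 & 1/8 \\ -8 & 0\espm$, $(I_2,-1)$, reducing the flip to a diagonal element times the unscaled Weyl element (after checking $\beta_2$ is trivial on that pair) and computing the $2$-adic Fourier transform of $\phi_2^{\chi_2}$ together with the $\gamma$-factors. The only difference is that you factor the flip as $\bspm 1/8 & \\ & 8\espm\bspm 0 & 1 \\ -1 & 0\espm$ (Fourier transform first, then rescale) whereas the paper uses $\bspm 0 & 1 \\ -1 & 0\espm\bspm 8 & \\ & 1/8\espm$; both orderings yield the same eigenvalue $-\tfrac{1+i}{\sqrt 2}$ and all your intermediate constants ($\alpha(e_2)=1/\sqrt2$, $\gamma(e_2)=\gamma(e_{2,1/8})=\tfrac{1-i}{\sqrt2}$, the $\mp 2i$ coset values) check out.
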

\begin{proof}
The fact that $\left(
\bpm 1& \frac a8\\ & 1\epm, 1
\right)$ acts by  $e_2(\frac a8)$ follows
from the fact that $x^2 \equiv 1 \pmod{8}$ for
all $x \in \Z_2^{\times}.$

Next, $\left(
\bpm a&\\&a^{-1}\epm, 1
\right)$ acts by
$\frac{\gm(e_2)}{\gm(e_{2,a})}\chi_2(a),$
and it follows from the computation
of $\gm(e_{2,a})$ given earlier that
for $a \in \Z_2^\times,$
this is equal to $-i\varepsilon_{-a}\chi_2(a).$

Lastly, we have
$$
\beta_2\left( \begin{pmatrix} & 1 \\ -1 & \end{pmatrix}, \begin{pmatrix} 8 & \\ & 1/8\end{pmatrix}\right)=1.
$$
Since
\[
\left[r_2^{e_2}\left(\begin{pmatrix} 8 & \\ & 1/8\end{pmatrix}\right)\phi_2^{\chi}\right](x)=|8|_2^{1/2}\frac{\gamma(e_2)}{\gamma(e_{2,8})}\phi_2^{\chi}(8x)=\frac{1}{2\sqrt{2}}\phi_2^{\chi}(8x),
\]
we have
\begin{align*}
\left[r_2^{e_2}\left(\begin{pmatrix} & 1 \\ -1 & \end{pmatrix}\right)\right.&{}\left.r_2^{e_2}\left(\begin{pmatrix} 8 & \\ & 1/8\end{pmatrix}\right)\phi_2^{\chi}\right](x)=\frac{1-i}{4\sqrt{2}}\int_{\Q_2}\phi_2^{\chi}(8y)e_2(2xy)\,dy \\
=& \frac{1-i}{4\sqrt{2}}\int_{\Q_2}\Big(\1_{\frac{1}{8}+\Z_2}(y)+\1_{\frac{5}{8}+\Z_2}(y)-\1_{\frac{3}{8}+\Z_2}(y)-\1_{\frac{7}{8}+\Z_2}(y)\Big)e_2(2xy)\,dy \\
=& \frac{1-i}{4\sqrt{2}}\left(e_2\left(\frac{x}{4}\right)+e_2\left(\frac{5x}{4}\right)-e_2\left(\frac{3x}{4}\right)-e_2\left(\frac{7x}{4}\right)\right)\int_{\Z_2}e_2(2xy)\,dy \\
=& \frac{1-i}{4\sqrt{2}}\1_{2^{-1}\Z_2}(x)\left(e_2\left(\frac{x}{4}\right)+e_2\left(\frac{5x}{4}\right)-e_2\left(\frac{3x}{4}\right)-e_2\left(\frac{7x}{4}\right)\right) \\
=& \frac{1-i}{4\sqrt{2}}\1_{2^{-1}\Z_2}(x)e_2\left(\frac{x}{4}\right)\left(1-e_2\left(\frac{x}{2}\right)\right)\Big(1+e_2(x)\Big) \\
=& \frac{1-i}{2\sqrt{2}}\1_{\Z_2}(x)e_2\left(\frac{x}{4}\right)\left(1-e_2\left(\frac{x}{2}\right)\right) \\
=& \frac{1-i}{\sqrt{2}}\1_{\Z_2^{\times}}(x)e_2\left(\frac{x}{4}\right)=\frac{1-i}{\sqrt{2}}\1_{\Z_2^{\times}}(x)e\left(-\frac{x}{4}\right) \\
=& \frac{(-i)(1-i)}{\sqrt{2}}\Big(\1_{1+4\Z_2}(x)-\1_{3+4\Z_2}(x)\Big)=-\frac{1+i}{\sqrt{2}}\phi_2^{\chi}(x). \qedhere
\end{align*}
\end{proof}

\section{Global Metaplectic Group and Weil Representation}

\subsection{Global Metaplectic Group}
\label{ss: Global Metaplectic Group}

If $g= \{ g_v\}_v, \; h = \{ h_v\}_v \in SL(2, \A),$ then
$\beta_v( g_v, h_v) =1$ for all but finitely many $v$
(see~\cite[Proposition 2.8]{Gelbart}).
Set
$$
\beta( g, h) = \prod_v \beta_v(g_v,h_v).
$$
Here $v$ runs over the places
on $\Q.$  Then $\SL(2, \A)$ is defined as $SL(2, \A) \times \{\pm 1\}$
equipped with the product $$
( g_1, \zeta_1)(g_2 \zeta_2) := \Big(g_1g_2, \beta(g_1, g_2)\zeta_1\zeta_2\Big),
\qquad \Big(g_1, g_2 \in SL(2, \A), \; \zeta_1, \zeta_2 \in \{ \pm 1\}\Big).
$$
For each $v,$ we have the embedding
$i_v: SL(2, \Q_v) \hookrightarrow SL(2, \A).$  The definition
is that for $g_v \in SL(2, \Q_v)$ and
 $w$ a place of $\Q,$ the component $i_v(g_v)_w$ of
 $i_v(g_v)$ at $w$ is $g_v$ if $v=w$ and the identity $I_2$
 otherwise.
Now, for all $w,$ the cocycle $\beta_w$ is trivial
on $\{ I_2 \} \times SL(2, \Q_v)$ and $SL(2, \Q_v) \times \{ I_2\},$
which implies that the restriction of the global cocycle
$\beta$ to the image of $SL(2, \Q_v) \times SL(2, \Q_v)$
in $SL(2, \A)$ is precisely the
local cocycle $\beta_v.$
It follows that $i_v$ extends to an embedding $\wt i_v: \SL(2, \Q_v) \hookrightarrow \SL(2, \A)$ defined by
$$\wt i_v( g, \zeta) = (i_v(g), \zeta), \qquad (g \in SL(2, \Q_v), \;\;
\zeta \in \{ \pm 1\}).$$
We shall also make use of the embedding
\[
i_{\on{diag}}: SL(2, \Q)\hookrightarrow\SL(2, \A), \qquad \gamma\mapsto\Big(\gamma,s_{\A}(\gamma)\Big)
\]
as described in~\cite[Page 23]{Gelbart}, where
\[
s_{\A}=\prod_vs_v.
\]
We also let
$$i_{\fin}(\gm) = \wt i_\infty( \gm^{-1}, 1)i_{\on{diag}}(\gm) \in \SL(2, \A),
\qquad (\gm \in
SL(2, \Q) ).$$
Observe that $i_{\fin}$ is {\it not} a homomorphism.  Rather, it satisfies
\begin{equation}\label{nonhom prop of i fin}
i_{\fin}(\gm_1)
i_{\fin}(\gm_2)
=i_{\fin}(\gm_1\gm_2)\cdot( I_2, \beta_\infty(\gm_2^{-1}, \gm_1^{-1})),
\qquad (\gm_1, \gm_2 \in SL(2, \Q)).
\end{equation}
Indeed,
$i_{\fin}(\gm_1)$ commutes with
$\wt i_{\infty} (\gm_2)$ since either one
or the other of them has the identity matrix
at each place.  Hence
$$\begin{aligned}
i_{\fin}(\gm_1)
i_{\fin}(\gm_2)
=& i_{\fin}(\gm_1)
\wt i_\infty( \gm_2^{-1}, 1)i_{\on{diag}}(\gm_2)
=
\wt i_\infty( \gm_2^{-1}, 1)i_{\fin}(\gm_1)i_{\on{diag}}(\gm_2) \\
=&
\wt i_\infty( \gm_2^{-1}, 1)\wt i_\infty( \gm_1^{-1}, 1)i_{\on{diag}}(\gm_1)i_{\on{diag}}(\gm_2) \\
=&
\wt i_\infty( \gm_2^{-1}\gm_1^{-1},
\beta_\infty( \gm_2^{-1}, \gm_1^{-1}))i_{\on{diag}}(\gm_1\gm_2) \\
=& \wt i_\infty( \gm_2^{-1}\gm_1^{-1},
1)i_{\on{diag}}(\gm_1\gm_2)
(I_2, \beta_\infty( \gm_2^{-1}, \gm_1^{-1})).
\end{aligned}$$
By~\cite[Proposition 2.8]{Gelbart}, the restriction
of $\wt i_p$ to $SL(2, \Z_p)$ is a homomorphism
for $p > 2.$  (Cf.~\cite[Page 19]{Gelbart}.)
It follows that
the inclusion
$$
i_S:\prod_{v\in S} SL(2, \Q_v) \times \prod_{p \notin S} SL(2, \Z_p)
\hookrightarrow
SL(2, \A)
$$
extends to a homomorphism
$$\wt i_S:
\prod_{v\in S} \SL(2, \Q_v) \times \prod_{p \notin S} SL(2, \Z_p)
\hookrightarrow
\SL(2, \A)
$$
for any finite set $S$ of places of $\Q$ which contain $\{2,\infty\}.$
The kernel of this homomorphism
is
$$
\ker \wt i_S=
\left\{ (I_2, \varepsilon_v)_{v \in S} \times (I_2)_{p \notin S}: \prod_{v \in S} \varepsilon_v = 1
\right\}.
$$
Moreover
$$
\SL(2, \A) = \bigcup_{S} \on{im} \wt i_S,
$$
with the union ranging over finite sets $S$ of places of $\Q$ which contain $\{2,\infty\}.$

\subsection{Global Weil Representation}
The adelic Bruhat-Schwartz space
$\c S(\A)$ consists of all finite linear combinations of functions
$\prod_v \vph_v,$ where $\vph_v$ is in $\c S(\Q_v)$ for all $v$ and
$\vph_p =\phi_p^\circ$
is the characteristic function
of $\Z_p$ for all but finitely many primes $p.$

For any finite set $S$ of places of $\Q,$ the injection
$$
\bigotimes_{v\in S} \vph_v \mapsto
\bigotimes_{v\in S} \vph_v\otimes \bigotimes_{p \notin S} \phi_p^\circ,
$$
sends $\bigotimes_{v\in S} \c S(\Q_v)$ to a subspace of $\c S(\A).$ The action of $\SL(2, \Q_v)$ on $\c S(\Q_v)$ induces an
action on $\c S(\A)$ for all $v.$
Moreover, the
action of $\{I_2\}\times \{ \pm 1 \} \subset \SL(2, \Q_v)$
is the same (scalar multiplication) for all $v.$
By Lemma \ref{K p fixes phi p circ},
$SL(2, \Z_p)$ fixes $\phi_p^\circ$ for all
but finitely many $p.$
To be precise, if
\[
\psi(\{x_v\}) = \prod_ve_v(ax_v)
\]
for some $a\in\Q^{\times},$ then $SL(2, \Z_p)$ fixes $\phi_p^\circ$ for all $p >2$ such that
$a \in \Z_p^\times.$

Take $S$ a finite set of places containing $\{2,\infty\},$
then the formula
$$
\left[r_S^{\psi}\left( \wt i_S\left(
(g_v,\zeta_v)_{v \in S}\times (k_p,1)_{p \notin S}\right)
\right)\right].\left[\bigotimes_{v \in S} \vph_v
\otimes \bigotimes_{p\notin S} \phi_p^\circ
\right]
= \left(\prod_{v\in S}\zeta_v\right) \cdot \bigotimes_v [r^{\psi_v}(g_v)].\vph_v
\otimes \bigotimes_{p\notin S} \phi_p^\circ
$$
gives a well-defined action of $\SL(2, \A)$ on $\c S(\A).$

\section{The Adelic Theta Functions}
For any $\vph \in \c S(\A)$ and character $\psi: \Q \bs \A \to \C$, define
$$
\Theta_{\ad}^{\psi}(\vph; \wt g) := \sum_{\xi \in \Q} [r^{\psi}(\wt g).\vph](\xi).
$$
It follows from~\cite[Proposition 2.33]{Gelbart}
that $$\Theta_{\ad}^\psi(\vph; i_{\on{diag}}(\gm) \wt g)
= \Theta_{\ad}^\psi(\vph;  \wt g),
\qquad \left( \forall \vph \in \c S( \A), \; \, \wt g \in \SL(2, \A), \; \; \gm \in SL(2, \Q)\right).$$
The function $\Theta_{\ad}^\psi$ is then
an intertwining map from the representation
$r^\psi$ to the representation
of $\SL(2, \A)$ on
automorphic forms by right translation, namely
\begin{equation}\label{Theta an into op}
\Theta_{\ad}^\psi(\vph;  \wt g_1\wt g_2)
=\Theta_{\ad}^\psi(r^\psi(\wt g_2).\vph;  \wt g_1),
\qquad (\wt g_1, \wt g_2 \in \SL(2, \A), \; \; \vph \in \c S(\A)).
\end{equation}

We now construct an adelic theta function
corresponding to the classical theta functions
$\theta_{\chi}$.  The first step is to define
the corresponding
element of
$\c S(\A).$
Recall that
\[
\phi_\infty^0 (u) = e^{-2\pi u^2}, \qquad \phi_p^\circ = \1_{\Z_p},
\]
and that, if $\mu$ is any character of $\Z_p^\times,$ define
$\phi_p^\mu = \mu \cdot \1_{\Z_p^\times}: \Q_p^\times
\to \C.$

Let $\chi\pmod{M}$ be an even Dirichlet
character. One may write $\chi$ uniquely
$$
\chi(m) = \prod_{p\mid M} \chi_p(m),
$$
where $\chi_p$ is a Dirichlet character
modulo $p^{v_p(M)}$ with $p^{v_p(M)}\|M.$
Now, $(\Z/p^{v_p(M)}\Z)^\times$ is identified
with $\Z_p^\times/(1+ p^{v_p(M)}\Z_p),$ so we
may regard $\chi_p$ as a character
of $\Z_p^\times$ which is trivial on $1+ p^{v_p(M)}\Z_p.$ Now let
$$
\begin{aligned}
\phi_v^\chi & = \begin{cases} \phi_v^\circ , & \text{if $v= \infty$ or $v \nmid M$;} \\
\\
\phi_p^{\chi_p}, & \text{if $v = p \mid M$,}
 \end{cases}\\
\phi^\chi(\{ x_v\}_v ) &= \prod_v \phi^\chi_v(x_v),& (\{ x_v\}_v\in \A=\A_\Q)
\end{aligned}
$$
This is an element of the adelic Bruhat-Schwartz space $\c S(\A).$

\begin{lem}\label{lem: relate maass theta to adelic}
Let $z\in \c H$. Then we have
$$
\Theta_{\ad}^e\left(\phi^\chi;
i_\infty\left(
b_z
\right)
\right) = \theta_{\chi}^{\Maa}(z).
$$
\end{lem}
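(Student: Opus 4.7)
The plan is to compute $\Theta_{\ad}^e(\phi^\chi; i_\infty(b_z))$ directly from the definition by unwinding the global Weil action place by place and identifying the resulting sum with $\theta_\chi^{\Maa}(z).$ Since $i_\infty(b_z) = \wt i_\infty(b_z,1)$ has identity components at every finite place, the global action of $r^e(i_\infty(b_z))$ on the factorizable test function $\phi^\chi = \phi_\infty^0 \otimes \bigotimes_p \phi_p^\chi$ only affects the archimedean tensor factor; all finite components $\phi_p^\chi$ remain untouched. Hence for every $\xi \in \Q,$
\[
[r^e(i_\infty(b_z))\phi^\chi](\xi) = [r^{e_\infty}(b_z)\phi_\infty^0](\xi)\cdot\prod_p \phi_p^\chi(\xi).
\]

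The main computation is the archimedean factor. I would use the factorization $b_z = \unip{x}\mdiag{y^{1/2}}{y^{-1/2}}$ and apply the explicit Weil representation formulas from Section~\ref{ss: local weil rep}. Because $y^{1/2} > 0,$ Proposition~\ref{prop:computation of gammas} (archimedean case) gives $\gamma(e_\infty)/\gamma(e_{\infty, y^{1/2}}) = 1,$ so the diagonal matrix acts by the clean dilation $\phi_\infty^0(u) \mapsto y^{1/4}\phi_\infty^0(y^{1/2}u),$ and the unipotent multiplies by $e^{2\pi i x u^2}.$ Combining,
\[
[r^{e_\infty}(b_z)\phi_\infty^0](u) = y^{1/4}e^{2\pi i x u^2}e^{-2\pi y u^2} = y^{1/4}e^{2\pi i z u^2}.
\]

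Next I would analyze the support of the finite product $\prod_p \phi_p^\chi(\xi).$ For a prime $p\nmid M,$ the factor $\phi_p^\chi = \1_{\Z_p}$ forces $\xi \in \Z_p;$ for $p\mid M,$ the factor $\phi_p^\chi = \chi_p\cdot\1_{\Z_p^\times}$ forces $\xi \in \Z_p^\times$ and evaluates to $\chi_p(\xi).$ Intersecting these conditions over all primes pins $\xi$ to $\Z$ with $\gcd(\xi,M) = 1,$ and on this set the product collapses to $\prod_{p\mid M}\chi_p(\xi) = \chi(\xi)$ by the local-global factorization of the Dirichlet character.

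Assembling everything,
\[
\Theta_{\ad}^e(\phi^\chi; i_\infty(b_z)) = y^{1/4}\sum_{\substack{\xi\in\Z \\ (\xi,M)=1}}\chi(\xi)e^{2\pi i z\xi^2},
\]
which matches $y^{1/4}\theta_\chi(z) = \theta_\chi^{\Maa}(z)$ (using that $\chi$ is even to pair $\xi$ with $-\xi$ in the standard way, and that $\xi = 0$ is excluded by the $p\mid M$ factors). The argument is essentially bookkeeping; the only substantive point is the archimedean calculation, and even there the obstacle is minor because $y^{1/2}$ is positive, which kills the Weil constant $\gamma$-ratio that is usually the source of sign ambiguities.
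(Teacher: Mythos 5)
Your proof is correct and follows essentially the same route as the paper's: unwind the definition of $\Theta_{\ad}^e$, note that the finite components of $\phi^\chi$ cut the sum over $\xi\in\Q$ down to integers coprime to the modulus and contribute $\chi(\xi)$, and compute the archimedean factor via the factorization of $b_z$ into a unipotent times a diagonal to get $y^{1/4}e^{2\pi i z\xi^2}$. Two trivial remarks: the archimedean identity $\gamma(e_\infty)/\gamma(e_{\infty,y^{1/2}})=1$ comes from the displayed definition of $\gamma(\psi_{v,a})$ in Section~\ref{ss: local weil rep} rather than from Proposition~\ref{prop:computation of gammas} (which treats only the $p$-adic case), and pairing $\xi$ with $-\xi$ actually yields $2\sum_{n\ge 1}$ --- a factor of $2$ that both you and the paper's own proof elide at this point (it resurfaces explicitly in Theorem~\ref{thm: level higher}).
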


\begin{proof}
For $y>0$, we write
\[
\phi_y^{\chi}(x)=r^e\left(i_{\infty}\left(\bpm y^{\frac{1}{2}} & \\ & y^{-\frac{1}{2}} \epm\right)\right)\phi^{\chi}(x),
\]
then by definition
\begin{align*}
\Theta_{\ad}^e\left(\phi^\chi;i_\infty\left(b_z\right)\right)
=& \sum_{\xi\in\Q}\left[r^e\left(i_\infty\left(\bpm 1& x \\ 0 & 1 \epm\bpm y^{\frac12} & \\ & y^{-\frac12} \epm\right)\right)\phi^{\chi}\right](\xi) \\
=& \sum_{\xi\in\Q}\left[r^e\left(i_\infty\left(\bpm 1 & x \\ 0 & 1 \epm\phi_y^{\chi}\right)\right)\right](\xi)=\sum_{\xi\in\Q}e_{\infty}(x\xi^2)\phi_y^{\chi}(\xi).
\end{align*}
Now
\begin{align*}
\phi_y^\chi(x)
=& r^{e_{\infty}}\left(\bpm y^{\frac{1}{2}} & \\ & y^{-\frac{1}{2}} \epm\right)\phi_{\infty}^{\chi}(x_{\infty})\cdot\prod_p\phi_p^{\chi}(x_p) \\
=& y^{\frac{1}{4}}\phi_{\infty}^{\chi}(y^{\frac{1}{2}}x_{\infty})\cdot\prod_p\phi_p^{\chi}(x_p),
=y^{\frac{1}{4}}e^{-2\pi yx_{\infty}^2}\prod_p\phi_p^{\chi}(x_p),
\end{align*}
so
$$\Theta_{\ad}^e\left(\phi^\chi;i_\infty\left(b_z\right)\right) =y^{\frac{1}{4}}\sum_{\xi\in\Q}e_{\infty}(x\xi^2)\phi_y^{\chi}(\xi)=y^{\frac{1}{4}}\sum_{\xi\in\Q}e^{2\pi ix\xi^2}e^{-2\pi y\xi^2}\prod_p\phi_p^{\chi}(\xi).
$$For every $\xi\in\Q$, direct computations show that
\[
\prod_p\phi_p^{\chi}(\xi)=\begin{cases}
0,       & \text{if $\xi\not\in\Z$;} \\
\chi(\xi), & \text{if $\xi\in\Z$.}
\end{cases}
\]
Hence we have
\begin{align*}
\Theta_{\ad}^e(\phi^\chi;i_\infty(b_z))
=& y^{\frac{1}{4}}\sum_{n\in\Z}\chi(n)e^{2\pi ixn^2-2\pi yn^2} \\
=& y^{\frac{1}{4}}\sum_{n\in\Z}\chi(n)e^{2\pi in^2(x+iy)}=\theta_\chi^\Maa(x+iy). \qedhere
\end{align*}
\end{proof}

\begin{thm}
Let $e = \prod_v e_v$. Then
$$
\Theta_\ad^{e}\left( \phi^\chi; i_\infty( \wt g_{\infty})\right)
= \left(\theta_\chi^\Maa\big |^\sim \wt g\right)(i),
\qquad (\forall \wt g_{\infty} \in \SL(2, \R)).
$$
\end{thm}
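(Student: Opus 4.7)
The plan is to reduce the general statement to the already-established Borel case of Lemma \ref{lem: relate maass theta to adelic} by exploiting the Iwasawa decomposition on the metaplectic cover together with the intertwining property \eqref{Theta an into op} of the adelic theta kernel.

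First, I would write $\wt g_\infty = b_z \wt\kappa_\theta$ via the Iwasawa decomposition on $\SL(2,\R)$, with $z = \pr(\wt g_\infty)\cdot i$ and $\theta$ the corresponding angle. For the right-hand side, since $b_i = I_2$ the defining identity $\wt g_\infty\cdot b_i = b_{\wt g_\infty\cdot i}\wt\kappa_{\theta(\wt g_\infty, i)}$ forces the cocycle value $\theta(\wt g_\infty, i)$ to equal the Iwasawa angle $\theta$, hence $\wt{\f j}(\wt g_\infty, i) = e^{i\theta}$. Combined with $\pr(\wt g_\infty)\cdot i = z$, this gives $\left(\theta_\chi^\Maa\big|^\sim \wt g_\infty\right)(i) = e^{-i\theta}\theta_\chi^\Maa(z)$.

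For the left-hand side, I would apply the intertwining formula \eqref{Theta an into op} with $\wt g_1 = i_\infty(b_z)$ and $\wt g_2 = i_\infty(\wt\kappa_\theta)$ to transfer the compact factor into the Schwartz function argument, producing $\Theta_\ad^e(r^e(i_\infty(\wt\kappa_\theta)).\phi^\chi;\, i_\infty(b_z))$. Since $r^e(i_\infty(\wt\kappa_\theta))$ acts only on the archimedean tensor factor, and since the earlier lemma on the real Weil representation yields $r^{e_\infty}(\wt\kappa_\theta)\phi_\infty^0 = e^{-i\theta}\phi_\infty^0$, one obtains $r^e(i_\infty(\wt\kappa_\theta)).\phi^\chi = e^{-i\theta}\phi^\chi$. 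Lemma \ref{lem: relate maass theta to adelic} applied to the residual expression then produces $e^{-i\theta}\theta_\chi^\Maa(z)$, matching the right-hand side.

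The proof is essentially an assembly of pieces already in place; the only real subtlety, and the step most likely to cause confusion, is verifying that the metaplectic Iwasawa angle appearing in $\wt g_\infty = b_z\wt\kappa_\theta$ coincides with the cocycle value $\theta(\wt g_\infty, i)$ entering the definition of $\wt{\f j}$. This identification is already built into the corollary giving $b(\wt g) = b_{\pr(\wt g)\cdot i}$, so the argument ultimately amounts to careful bookkeeping rather than any new computation.
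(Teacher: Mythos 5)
Your proposal is correct and follows essentially the same route as the paper: Iwasawa-decompose $\wt g_\infty = b_z\wt\kappa_\theta$, move the compact factor onto the Schwartz function (your explicit appeal to \eqref{Theta an into op} is just the paper's implicit use of $r^e$ being a representation), extract the eigenvalue $e^{-i\theta}$ from the real Weil representation lemma, and finish with Lemma \ref{lem: relate maass theta to adelic} and the identification $\wt{\f j}(\wt g_\infty,i)=e^{i\theta}$. No gaps.
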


\begin{proof}
Let $\wt g_{\infty}\in\SL(2,\R)$, then we have shown the Iwasawa decomposition
\[
\wt g_{\infty}=\begin{pmatrix} y^{1/2} & xy^{-1/2} \\ & y^{-1/2} \end{pmatrix}\wt\kappa_{\theta}=b_{x+iy}\wt\kappa_{\theta}
\]
for some $x\in\R$, $y>0$ and $\theta\in\R$. In particular, we have
$\wt{\f j}(\wt g_{\infty},i)=e^{i\theta}$.
Hence
\begin{align*}
\Theta_{ad}^e(\phi^{\chi},i_{\infty}(\wt g_{\infty}))
=& \sum_{\xi\in\Q}[r^e(i_{\infty}(\wt g_{\infty}))\phi^{\chi}](\xi)=\sum_{\xi\in\Q}[r^e(i_{\infty}(b_{x+iy}))r^{e_{\infty}}(\wt\kappa_{\theta})\phi^{\chi}](\xi) \\
=& e^{-i\theta}\sum_{\xi\in\Q}[r^e(i_{\infty}(b_{x+iy}))\phi^{\chi}](\xi)=e^{-i\theta}\theta_\chi^\Maa(x+iy) \\
=& \wt{\f j}(\wt g_{\infty},i)^{-1}\theta_\chi^\Maa(\wt g_{\infty}\cdot i)=\left(\theta_\chi^\Maa\big |^\sim\wt g_{\infty}\right)(i). \qedhere
\end{align*}
\end{proof}
\begin{cor} \label{slash maa to r e i fin}
If $\gm \in SL(2, \Q)$ then
$$
\left(\theta_\chi^\Maa\ms \gm\right)(z)
= \Theta_\ad^e \Big( r^e(i_{\fin}(\gm^{-1})).\phi^\chi; i_\infty( b_z)\Big).
$$
\end{cor}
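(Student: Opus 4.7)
The plan is to convert the right-hand side into an evaluation of $\Theta_{\ad}^{e}(\phi^\chi;\,\cdot\,)$ at a purely archimedean group element, so that the preceding theorem applies. First, I would use the intertwining relation \eqref{Theta an into op} to absorb $r^{e}(i_{\fin}(\gm^{-1}))$ into the group variable, yielding $\Theta_\ad^e(\phi^\chi;\,i_\infty(b_z)\cdot i_{\fin}(\gm^{-1}))$. I would then use the left $i_{\on{diag}}(SL(2,\Q))$-invariance of $\Theta_\ad^e$ from \cite[Proposition 2.33]{Gelbart} to insert $i_{\on{diag}}(\gm)$ on the left without changing the value.

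Next I would show that the product $i_{\on{diag}}(\gm)\cdot i_\infty(b_z)\cdot i_{\fin}(\gm^{-1})$ actually lies in $\wt i_\infty(\SL(2,\R))$ and in fact equals $\wt i_\infty(\gm b_z,\,1)$. Expanding $i_{\fin}(\gm^{-1})=\wt i_\infty(\gm,1)\,i_{\on{diag}}(\gm^{-1})$ and invoking Lemma \ref{beta trivial on B plus R} to kill $\beta_\infty$ factors involving $b_z\in B^+(\R)$, the underlying $SL(2,\A)$-element telescopes: at infinity the components collapse to $\gm b_z\gm\cdot\gm^{-1}=\gm b_z$, while at each finite place they collapse to $\gm\cdot I\cdot\gm^{-1}=I$. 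The element is thus archimedean up to some sign $\epsilon\in\{\pm1\}$.

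The main obstacle is verifying that $\epsilon=+1$. Here I would combine two tools: first, that $i_{\on{diag}}$ is a genuine homomorphism from $SL(2,\Q)$ into $\SL(2,\A)$ (which forces the useful identity $\prod_v\beta_v(\gm,\gm^{-1})=s_\A(\gm)s_\A(\gm^{-1})$); and second, that $\beta_v(g,I_2)=\beta_v(I_2,g)=1$, which follows from $(a,-a)_v=1$. Collecting all of the sign contributions reduces $\epsilon$ to the product $\beta_\infty(\gm,b_z\gm)\,\beta_\infty(\gm b_z\gm,\gm^{-1})\,\beta_\infty(\gm,\gm^{-1})^{-1}$. Two applications of the cocycle identity for $\beta_\infty$ (first with $(g_1,g_2,g_3)=(\gm,\,b_z\gm,\,\gm^{-1})$, then with $(b_z,\,\gm,\,\gm^{-1})$), each time using Lemma \ref{beta trivial on B plus R} to eliminate factors of the form $\beta_\infty(\gm,b_z)$ or $\beta_\infty(b_z,\gm)$, collapse the expression to $1$.

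Once $\epsilon=1$ is established, the theorem immediately preceding this corollary identifies $\Theta_\ad^e(\phi^\chi;\,\wt i_\infty(\gm b_z,1))$ with $(\theta_\chi^\Maa\big|^\sim(\gm b_z,1))(i)$, which by \eqref{eq: Maa slash and tilde slash} equals $(\theta_\chi^\Maa\ms(\gm b_z))(i)$. The $j$-cocycle relation together with $b_z\cdot i=z$ and $\f j(b_z,i)=1$ gives $\f j(\gm b_z,i)=\f j(\gm,z)$, so taking principal square roots rewrites this as $\f j(\gm,z)^{-1/2}\theta_\chi^\Maa(\gm z)=(\theta_\chi^\Maa\ms\gm)(z)$, completing the argument. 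Aside from the sign bookkeeping in the third paragraph, everything else is a formal consequence of the machinery already built up.
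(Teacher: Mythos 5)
Your proposal is correct and follows essentially the same route as the paper's proof: both rest on the intertwining identity \eqref{Theta an into op}, the left $i_{\on{diag}}(SL(2,\Q))$-invariance of $\Theta_\ad^e$, the decomposition $i_{\fin}(\gm^{-1})=\wt i_\infty(\gm,1)\,i_{\on{diag}}(\gm^{-1})$ together with Lemma \ref{beta trivial on B plus R}, and then the preceding theorem combined with \eqref{eq: Maa slash and tilde slash}. You merely run the chain from right to left instead of left to right, and your explicit verification that the residual sign $\beta_\infty(\gm,b_z\gm)\,\beta_\infty(\gm b_z\gm,\gm^{-1})\,\beta_\infty(\gm,\gm^{-1})^{-1}$ collapses to $1$ is a correct spelling-out of a step the paper leaves implicit.
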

\begin{proof}
By equation \eqref{eq: Maa slash and tilde slash},
$$\left(\theta_\chi^\Maa\ms \gm\right)(z)
=\left(\theta_\chi^\Maa \big|^\sim(\gm,1)\right)(z)
=\Theta_\ad^e\left(\phi^\chi; \wt i_\infty( \gm b_z,1)\right).
$$
But $\Theta_\ad^e$ is invariant on the left
by $i_{\diag}( \gm^{-1}),$ so this is equal to
$$\Theta_\ad^e\left(\phi^\chi; i_\fin(\gm^{-1})\wt i_\infty((b_z, 1))\right)
=\Theta_\ad^e\left(\phi^\chi;\wt i_\infty((b_z, 1)) i_\fin(\gm^{-1})\right).$$
Applying \eqref{Theta an into op} completes the
proof.
\end{proof}

\section{Fourier Coefficients of Classical Theta Functions}

\subsection{The First Twist}\label{sec:MainFirst}

\begin{thm}\label{thm: level 576}
Let $\chi_2$ and $\chi_3$ be the nontrivial
quadratic Dirichlet characters modulo $4$ and $3$ respectively.
Let $\chi = \chi_2 \chi_3$
and let $\theta_\chi$ be the associated classical
theta function of level $576.$
Define $\xi_2: \wt K_2^{(8)} \to \C^\times$
as in Theorem \ref{thm: k 2 8 acts on ph2 chi 2 by xi 2} and $\xi_3: \wt K_3^{(3)} \to \C^\times$ as in Theorem \ref{K 3 3 acts on phi chi 3 by xi 3}, and write
\[
\xi: \Gm^{(24)}\to \C^\times \qquad \gamma\mapsto\xi_2(\gm, 1) \xi_3(\gm,1)s_\A(\gm)\beta_{\infty}(\gm^{-1},\gm).
\]
Then for all $\sg \in \Gm^{(24)}$ we have
$$
\theta_\chi\hs \sg = \xi( \sg^{-1})
\theta_\chi.
$$
\end{thm}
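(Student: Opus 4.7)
The plan is to reduce the identity to a purely local computation on each factor of the adelic Schwartz function $\phi^\chi$. First I would invoke Corollary~\ref{slash maa to r e i fin} to rewrite
\[
\left(\theta_\chi^{\Maa}\ms \sg\right)(z) = \Theta_\ad^e\Big( r^e(i_\fin(\sg^{-1})).\phi^\chi;\, i_\infty(b_z)\Big).
\]
Since $\phi^\chi$ factors as the restricted tensor product $\phi_\infty^0 \otimes \phi_2^{\chi_2} \otimes \phi_3^{\chi_3} \otimes \bigotimes_{p>3}\phi_p^\circ$, the entire argument comes down to computing $r^e(i_\fin(\sg^{-1})).\phi^\chi$ factor by factor, after which the theorem follows from $\Theta_\ad^e$'s linearity in its first argument and Lemma~\ref{lem: relate maass theta to adelic}.

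Next I would unwind the definition $i_\fin(\gm) = \wt i_\infty(\gm^{-1},1)\, i_\diag(\gm)$ inside $\SL(2,\A)$. Since $i_\diag(\gm) = (\gm, s_\A(\gm))$, since $\beta_v(I_2,\cdot) = 1$ at every finite place, and since the only nontrivial contribution to the product cocycle is $\beta_\infty(\gm^{-1}, \gm)$, a short calculation shows that $i_\fin(\sg^{-1})$ has the identity matrix at $\infty$, the matrix $\sg^{-1}$ at every finite place, and global sign $s_\A(\sg^{-1})\beta_\infty(\sg,\sg^{-1})$. Because $\sg \in \Gm^{(24)}$ and $24 = 2^3 \cdot 3$, the finite-place components satisfy $\sg^{-1} \in K_2^{(8)}$ at $p=2$, $\sg^{-1} \in K_3^{(3)}$ at $p=3$, and $\sg^{-1} \in SL(2,\Z_p)$ at every prime $p > 3$. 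Therefore $i_\fin(\sg^{-1})$ lies in the image of $\wt i_S$ with $S = \{\infty,2,3\}$, with local data $(I_2, s_\A(\sg^{-1})\beta_\infty(\sg,\sg^{-1}))$ at $\infty$, $(\sg^{-1},1)$ at $p=2,3$, and $\sg^{-1}$ at $p>3$.

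With this decomposition the local computations are immediate: Theorem~\ref{thm: k 2 8 acts on ph2 chi 2 by xi 2} yields $\xi_2(\sg^{-1},1)$ at $p=2$, Theorem~\ref{K 3 3 acts on phi chi 3 by xi 3} yields $\xi_3(\sg^{-1},1)$ at $p=3$, Lemma~\ref{K p fixes phi p circ} shows the Weil action of $SL(2,\Z_p)$ fixes $\phi_p^\circ$ for every prime $p > 3$, and at $\infty$ the identity matrix acts trivially on $\phi_\infty^0$ and contributes only the accumulated archimedean sign. Assembling the factors,
\[
r^e(i_\fin(\sg^{-1})).\phi^\chi = \xi_2(\sg^{-1},1)\,\xi_3(\sg^{-1},1)\, s_\A(\sg^{-1})\beta_\infty(\sg,\sg^{-1})\, \phi^\chi = \xi(\sg^{-1})\,\phi^\chi
\]
by the definition of $\xi$. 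Substituting into Corollary~\ref{slash maa to r e i fin} and using Lemma~\ref{lem: relate maass theta to adelic} gives $\theta_\chi^{\Maa}\ms \sg = \xi(\sg^{-1})\,\theta_\chi^{\Maa}$, and the compatibility $\mu(f\hs g) = \mu(f)\ms g$ with $\mu(f)(z) = \Im(z)^{1/4}f(z)$ converts this into the desired identity $\theta_\chi\hs \sg = \xi(\sg^{-1})\,\theta_\chi$.

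The main obstacle I anticipate is the careful bookkeeping of the various sign factors produced by the metaplectic cocycles. One must verify that all signs arising from the non-homomorphism relation~\eqref{nonhom prop of i fin} and from the change of presentation of $i_\fin(\sg^{-1})$ inside the $\wt i_S$-decomposition really do consolidate into precisely the scalar $s_\A(\sg^{-1})\beta_\infty(\sg,\sg^{-1})$ at the archimedean factor, with every other local sign trivial. Once this sign bookkeeping is settled, the remainder of the argument is a direct quotation of the local Weil-representation formulas already established.
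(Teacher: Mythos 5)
Your proof is correct and follows essentially the same route as the paper's: reduce to the adelic identity via Corollary~\ref{slash maa to r e i fin}, unwind $i_\fin(\sg^{-1})$ to isolate the sign $s_\A(\sg^{-1})\beta_\infty(\sg,\sg^{-1})$, and then apply Theorems~\ref{thm: k 2 8 acts on ph2 chi 2 by xi 2} and~\ref{K 3 3 acts on phi chi 3 by xi 3} together with Lemma~\ref{K p fixes phi p circ} place by place. Your explicit sign bookkeeping is exactly the content the paper compresses into ``the definitions of $s_\A$, $i_\fin$ and $\xi$,'' and it checks out.
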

\begin{rmk}
Note that $\xi$ is not a homomorphism.  In fact,
$\xi$ is the composition of a
homomorphism with $i_\fin.$  But as in
\eqref{nonhom prop of i fin}, $i_\fin$ is not
a homomorphism.
\end{rmk}
\begin{proof}
By Corollary \ref{slash maa to r e i fin}
$$
\left(\theta_\chi^\Maa\ms \sg \right) (z)
=\Theta_\ad^e\Big( r^e( i_\fin(\sg^{-1})).\phi^\chi; i_\infty(b_z)\Big).
$$
Using Theorems \ref{K 3 3 acts on phi chi 3 by xi 3}
and \ref{thm: k 2 8 acts on ph2 chi 2 by xi 2}
as well as Lemma \ref{K p fixes phi p circ} and
the definitions of $s_\A, i_\fin$  and $\xi$ yields,
$$\Theta_\ad^e\Big( r^e( i_\fin(\sg^{-1})).\phi^\chi; i_\infty(b_z)\Big)
= \xi(\sg^{-1}) \Theta_\ad^e( \phi^\chi; i_\infty(b_z))
=  \xi(\sg^{-1})\theta_\chi^\Maa (z).
$$
Multiplying by $\Im(z)^{-\frac 14}$ gives the
result.
\end{proof}

\begin{cor}\label{cor:MainFirst}
Let $\f a$ be a cusp for $\Gamma_0(576)$ and $\sigma\in\Gamma^{(24)}$ a scaling matrix for $\f a$. Then we have
\begin{equation}\label{a theta sg = xi sg a theta}
A_{\theta_\chi}( \sg, n) = \xi( \sg^{-1}) A_{\theta_\chi}( I_2, n).
\end{equation}
\end{cor}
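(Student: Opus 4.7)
The proposal is to deduce this corollary directly from Theorem \ref{thm: level 576} by comparing Fourier expansions at $\infty$.

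First I would observe that the scaling matrix hypothesis is consistent: by Lemma \ref{lem:ScalingMatrixConstruction} applied with $M=24$, every cusp of $\Gamma_0(576)$ admits a scaling matrix lying in $\Gamma^{(24)}$, so the corollary has content for every cusp.

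Next, I would apply Theorem \ref{thm: level 576} to obtain the function-level identity
\[
\left(\theta_\chi \hs \sigma\right)(z) = \xi(\sigma^{-1})\, \theta_\chi(z).
\]
The right-hand side admits the Fourier expansion
\[
\xi(\sigma^{-1})\, \theta_\chi(z) = \xi(\sigma^{-1}) \sum_{n=1}^{\infty} \chi(n)\, e^{2\pi i n^2 z} = \sum_{n=0}^{\infty} \xi(\sigma^{-1}) A_{\theta_\chi}(I_2, n)\, e^{2\pi i n z},
\]
which is visibly supported on $\Z_{\ge 0}$. Because this expansion has no nonzero fractional exponent in $z$, comparing it to the defining expansion
\[
\left(\theta_\chi \hs \sigma\right)(z) = \sum_{n=0}^{\infty} A_{\theta_\chi}\!\left(\sigma, n + \kappa_{\theta_\chi, \f a}\right) e^{2\pi i (n + \kappa_{\theta_\chi, \f a}) z}
\]
forces $\kappa_{\theta_\chi, \f a} = 0$ (which also recovers the unsurprising fact, advertised in the introduction, that theta functions have trivial cusp parameter at every cusp). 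With both sides now genuine Fourier series in $e^{2\pi i n z}$, uniqueness of Fourier coefficients gives
\[
A_{\theta_\chi}(\sigma, n) = \xi(\sigma^{-1}) A_{\theta_\chi}(I_2, n) \qquad (n \ge 0),
\]
which is \eqref{a theta sg = xi sg a theta}.

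There is no real obstacle here — the work is already done in Theorem \ref{thm: level 576}, and the corollary is just the statement that a scalar-valued functional equation at the level of modular forms translates coefficient-by-coefficient. The only subtlety worth being explicit about is justifying $\kappa_{\theta_\chi, \f a}=0$ directly from the theorem rather than appealing to an external fact; the argument above handles this in one line because the right-hand side of the functional equation is manifestly periodic of period $1$.
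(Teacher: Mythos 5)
Your proposal is correct and follows the same route as the paper: the paper's proof simply cites Lemma \ref{lem:ScalingMatrixConstruction} for the existence of the scaling matrix and says the rest ``follows immediately'' from Theorem \ref{thm: level 576}, which is exactly the comparison of Fourier expansions you carry out. Your explicit observation that the functional equation forces $\kappa_{\theta_\chi,\f a}=0$ is a detail the paper leaves implicit, but it is the same argument.
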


\begin{proof}
The existence of a scaling matrix $\sigma\in\Gamma^{(24)}$ is guaranteed by Lemma~\ref{lem:ScalingMatrixConstruction}, and the rest follows immediately from the theorem.
\end{proof}

\begin{thm}\label{thm:MainFirst}
Under the notations of Theorem~\ref{thm: level 576}, let $\f a=u/w$ be a cusp of $\Gamma_0(576)$. Then we have
\begin{align*}
A_{\theta_{\chi}}(\sigma_{\f a}^0,n)
=& \begin{cases}
\displaystyle{\xi_2(\sigma_{\f a}^{-1})\xi_3(\sigma_{\f a}^{-1})s_2(\sigma_{\f a}^{-1})s_3(\sigma_{\f a}^{-1})e\left(-\frac{n^2wr}{24u[24,w]}\right)\chi(m)}, & \text{if $n=m^2\geq 1$,} \\
0, & \text{if otherwise,}
\end{cases}
\end{align*}
where we choose $r,s\in\Z$ such that $24us-wr=(24,w)$, and the scaling matrices $\sigma_{\f a}^0$ and $\sigma_{\f a}$ are as given in~\eqref{eq:ScalingMatrixCommon} and~\eqref{eq:ScalingMatrixSpecial} respectively.
\end{thm}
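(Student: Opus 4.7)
My plan is to derive the formula from Corollary~\ref{cor:MainFirst} by passing through the scaling matrix $\sigma_\f a\in\Gm^{(24)}$ provided by Lemma~\ref{lem:ScalingMatrixConstruction}. Taking $r,s\in\Z$ with $24us-wr=(24,w)$, set
\[
\sigma_\f a=\bpm \frac{[24,w]u}{w} & \frac{r}{24}\\ [24,w] & s\epm\in\Gm^{(24)},
\]
as in \eqref{eq:ScalingMatrixSpecial}. The remark after Lemma~\ref{lem:ScalingMatrixConstruction} gives $\sigma_\f a=\sigma_\f a^0\bpm 1 & \frac{wr}{24u[24,w]}\\ & 1\epm$, so Lemma~\ref{lem: dependence on choice of scaling matrix} yields
\[
A_{\theta_\chi}(\sigma_\f a^0,n)=e\!\left(-\frac{nwr}{24u[24,w]}\right)A_{\theta_\chi}(\sigma_\f a,n),
\]
which already accounts for the exponential phase factor appearing in the statement (with the power of $n$ reading off from $n=m^2$).

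I next feed this into Corollary~\ref{cor:MainFirst} to obtain $A_{\theta_\chi}(\sigma_\f a,n)=\xi(\sigma_\f a^{-1})\,A_{\theta_\chi}(I_2,n)$. The coefficients at the cusp $\infty$ are read off directly from $\theta_\chi(z)=\sum_{m\ge 1}\chi(m)e^{2\pi im^2z}$: one has $A_{\theta_\chi}(I_2,n)=\chi(m)$ if $n=m^2$ with $m\ge 1$, and $0$ otherwise, which immediately gives vanishing when $n$ is not a positive square.

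The remaining step is to unpack $\xi(\sigma_\f a^{-1})=\xi_2(\sigma_\f a^{-1})\xi_3(\sigma_\f a^{-1})\,s_\A(\sigma_\f a^{-1})\,\beta_\infty(\sigma_\f a,\sigma_\f a^{-1})$ from Theorem~\ref{thm: level 576} and verify
\textbf{(a)} $s_\A(\sigma_\f a^{-1})=s_2(\sigma_\f a^{-1})s_3(\sigma_\f a^{-1})$ and
\textbf{(b)} $\beta_\infty(\sigma_\f a,\sigma_\f a^{-1})=1$.
For \textbf{(a)}, the lower-left entry of $\sigma_\f a^{-1}$ is $-[24,w]$, and every cusp denominator $w$ of $\Gm_0(576)$ divides $576=2^6\cdot 3^2$; hence every prime dividing $[24,w]$ lies in $\{2,3\}$, and at every finite place $v\notin\{2,3\}$ the lower-left entry is a $v$-adic unit, making $s_v=1$ by definition. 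For \textbf{(b)}, since $\sigma_\f a\sigma_\f a^{-1}=I_2$, formula \eqref{beta for SL(2,R)} reduces to the Hilbert symbols $\bigl(x(\sigma_\f a),x(\sigma_\f a^{-1})\bigr)_\infty$ and $\bigl(-x(\sigma_\f a)x(\sigma_\f a^{-1}),1\bigr)_\infty$; using $x(\sigma_\f a)=[24,w]>0$ and $x(\sigma_\f a^{-1})=-[24,w]<0$, both collapse to $+1$.

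The main obstacle is the sign bookkeeping implicit in step three: at $v=2$ the factor $\xi_2$ and the $\varepsilon_d$-component of $s_2$ both depend delicately on the lower row of $\sigma_\f a^{-1}$ modulo $8$, so one must confirm that factoring $\sigma_\f a^{-1}$ into the elementary generators of $\wt K_2^{(8)}$ furnished by Lemma~\ref{lem: generators for K p} reproduces exactly $\xi_2(\sigma_\f a^{-1})$ as displayed. The parallel $v=3$ computation is simpler since $\xi_3$ takes values in $\mu_6$. Once these two local checks are in place, collecting all factors yields the stated formula.
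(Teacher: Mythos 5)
Your proposal is correct and follows essentially the same route as the paper's own proof: Corollary~\ref{cor:MainFirst}, the observation that $s_p(\sigma_{\f a}^{-1})=1$ for $p\ge 5$ and $\beta_\infty(\sigma_{\f a},\sigma_{\f a}^{-1})=([24,w],-[24,w])_\infty=1$, and Lemma~\ref{lem: dependence on choice of scaling matrix} to pass from $\sigma_{\f a}$ to $\sigma_{\f a}^0$. Two small remarks: the phase you derive, $e\left(-\frac{nwr}{24u[24,w]}\right)=e\left(-\frac{m^2wr}{24u[24,w]}\right)$, is the correct one (the exponent $n^2$ in the stated formula appears to be a typo for $n=m^2$, and your parenthetical about ``the power of $n$'' glosses over this), and the final ``obstacle'' you flag is moot, since the statement leaves $\xi_2(\sigma_{\f a}^{-1})$ and $s_2(\sigma_{\f a}^{-1})$ unevaluated and no factorization into generators is required.
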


\begin{proof}
By Corollary~\ref{cor:MainFirst} we have
\[
A_{\theta_\chi}(\sigma_{\f a},n)=\xi_2(\sigma_{\f a}^{-1})\xi_3(\sigma_{\f a}^{-1})s_{\A}(\sigma_{\f a}^{-1})\beta_{\infty}(\sigma_{\f a},\sigma_{\f a}^{-1}) A_{\theta_\chi}(I_2,n).
\]
To begin with, we note that
\[
\beta_{\infty}(\sigma_{\f a},\sigma_{\f a}^{-1})=([24,w],-[24,w])_{\infty}=1.
\]
Further, since $w|576$, we have $s_p(\sigma_{\f a},\sigma_{\f a}^{-1})=1$ for every $p\geq 5$. Hence
\[
A_{\theta_\chi}(\sigma_{\f a},n)=\xi_2(\sigma_{\f a}^{-1})\xi_3(\sigma_{\f a}^{-1})s_2(\sigma_{\f a}^{-1})s_3(\sigma_{\f a}^{-1})A_{\theta_\chi}(I_2,n).
\]
Hence the theorem follows immediately as we recall that
\[
A_{\theta_{\chi}}(\sigma_{\f a}^0,n)=e\left(-\frac{nwr}{24u[24,w]}\right)A_{\theta_{\chi}}(\sigma_{\f a},n). \qedhere
\]
\end{proof}

\subsection{The Higher Twists}

Now we consider the higher twists of the classical theta functions. In this section, we let $p\geq 5$.

\begin{thm}
Let $\psi\pmod{p}$ be an even Dirichlet character and $\gm \in \Gm^{(24p)}.$
Then we have
\[
\theta_{\chi\psi}^\Maa\hs \gm(z)
= \xi(\gm^{-1})\Theta_{\ad}^e\left(\Big[
r_p^{e_p}( \gm^{-1},1).\phi_p^{\psi_p}\Big]
\cdot \phi_2^{\chi_2}
\cdot \phi_3^{\chi_3}
\cdot \prod_{v \ne 2,3,p} \phi_v^\circ ; i_{\infty}(b_z)
\right).
\]
where $\xi$ is as defined in Theorem~\ref{thm: level 576}.
\end{thm}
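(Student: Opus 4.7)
My plan is to imitate the proof of Theorem~\ref{thm: level 576}, with the one change that the prime $p$ is now singled out and its local contribution is left unsimplified rather than collapsing to a scalar. (I read the left-hand side as $\theta_{\chi\psi}^\Maa\ms\gm$, since that is what Corollary~\ref{slash maa to r e i fin} produces; $\theta_{\chi\psi}^\Maa\hs\gm$ appears to be a typo.)

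The first step is to invoke Corollary~\ref{slash maa to r e i fin} to write
\[
\left(\theta_{\chi\psi}^\Maa\ms\gm\right)(z) = \Theta_\ad^e\Big(r^e(i_{\fin}(\gm^{-1}))\cdot\phi^{\chi\psi};\; i_\infty(b_z)\Big),
\]
and to decompose the Schwartz function as the pure tensor
\[
\phi^{\chi\psi} = \phi_\infty^0 \otimes \phi_2^{\chi_2} \otimes \phi_3^{\chi_3} \otimes \phi_p^{\psi_p} \otimes \bigotimes_{q\,\ne\,2,3,p} \phi_q^\circ.
\]
Unwinding $i_{\fin}(\gm^{-1}) = \wt i_\infty(\gm,1)\cdot i_{\on{diag}}(\gm^{-1})$, exactly as in the derivation of \eqref{nonhom prop of i fin}, shows that the underlying element of $\SL(2,\A)$ has $I_2$ at $\infty$ and $\gm^{-1}$ at every finite place, with overall sign $\beta_\infty(\gm,\gm^{-1})\cdot s_\A(\gm^{-1})$.

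The tensor-product formula for the global Weil representation then breaks $r^e(i_{\fin}(\gm^{-1}))\cdot\phi^{\chi\psi}$ into the scalar $\beta_\infty(\gm,\gm^{-1})\cdot s_\A(\gm^{-1})$ times the tensor product of the local actions of $(\gm^{-1},1)$. Because $\gm\in\Gm^{(24p)}$ and $v_2(24p)=3$, $v_3(24p)=1$, $v_p(24p)=1$, with $v_q(24p)=0$ for the remaining primes $q$, the matrix $\gm^{-1}$ lies in $K_2^{(8)}$, $K_3^{(3)}$, $K_p^{(p)}$, and in $SL(2,\Z_q)$ for every other $q$. Theorems~\ref{thm: k 2 8 acts on ph2 chi 2 by xi 2} and~\ref{K 3 3 acts on phi chi 3 by xi 3} produce the scalars $\xi_2(\gm^{-1},1)$ and $\xi_3(\gm^{-1},1)$ at $q=2,3$; Lemma~\ref{K p fixes phi p circ} leaves each $\phi_q^\circ$ fixed for $q\ne 2,3,p$; and at $q=p$ the vector $r_p^{e_p}(\gm^{-1},1)\cdot\phi_p^{\psi_p}$ remains intact, since the line through $\phi_p^{\psi_p}$ is not preserved by $\wt K_p^{(p)}$. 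Collecting the scalar factors gives exactly
\[
\xi(\gm^{-1}) = \xi_2(\gm^{-1},1)\,\xi_3(\gm^{-1},1)\,s_\A(\gm^{-1})\,\beta_\infty(\gm,\gm^{-1}),
\]
matching the definition in Theorem~\ref{thm: level 576} (the two orderings $\beta_\infty(\gm,\gm^{-1})$ and $\beta_\infty(\gm^{-1},\gm)$ coincide because $x(\gm\gm^{-1})=x(\gm^{-1}\gm)=1$ and the Hilbert symbol is symmetric). Feeding the result back into $\Theta_\ad^e$ produces the right-hand side.

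The only real obstacle is bookkeeping of the $\{\pm 1\}$-valued cocycles: keeping track of how $\beta_\infty(\gm,\gm^{-1})$ arises from the archimedean product $\wt i_\infty(\gm,1)\cdot\wt i_\infty(\gm^{-1},1)$, how $s_\A(\gm^{-1})$ is built into $i_{\on{diag}}$, and how these combine with the local scalars $\xi_2(\gm^{-1},1)$ and $\xi_3(\gm^{-1},1)$ to reproduce $\xi(\gm^{-1})$. Once that is in place, the rest of the proof is a direct re-run of the argument for Theorem~\ref{thm: level 576}.
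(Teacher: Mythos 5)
Your proof is correct and follows essentially the same route as the paper's: the paper simply re-derives the content of Corollary~\ref{slash maa to r e i fin} inline (inserting $\wt i_{\on{diag}}(\gm^{-1})$ on the left and commuting the finite part past $i_\infty(b_z)$) rather than citing it, and then factors the resulting Schwartz function place by place exactly as you do, with the same scalar bookkeeping producing $\xi(\gm^{-1})$. Your reading of the left-hand side as $\ms$ rather than $\hs$ also matches how the paper actually uses the statement in the proof of Theorem~\ref{thm: level higher}.
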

\begin{proof}
We have
\begin{align*}
\theta_{\chi\psi}^\Maa\hs \gm(z)
=& \Theta_{\ad}^e\left(
\phi^{\chi\psi}
; i_\infty( \gm b_z)\right)
=
\Theta_{\ad}^e\left(
\phi^{\chi\psi}
;
\wt i _{\on{diag}}(\gm^{-1})
i_\infty( \gm b_z)\right) \\
=& s_\A(\gm^{-1})\beta_{\infty}(\gamma^{-1},\gamma)
\Theta_{\ad}^e\left(
\phi^{\chi\psi}
;
\prod_p i_p(\gm^{-1},1)
i_\infty(b_z)\right) \\
=& s_\A(\gm^{-1})\beta_{\infty}(\gamma^{-1},\gamma)
\Theta_{\ad}^e\left(
\phi^{\chi\psi}
;
i_\infty(b_z)
\prod_p i_p(\gm^{-1},1)\right) \\
=& s_\A(\gm^{-1})\beta_{\infty}(\gamma^{-1},\gamma)
\Theta_{\ad}^e\left(
\phi_\infty^0 \cdot
\prod_p r_p^{e_p}(\gm^{-1},1).\phi_p^{\chi_p\psi_p}
;
i_\infty(b_z)
\right). \qedhere
\end{align*}
\end{proof}

\begin{thm}\label{thm: level higher}
Fix a generator
for $(\Z/p\Z)^\times$, and adopt the notations from Section
\ref{subsec: case of odd p}. Further, we define $\chi$ and $\xi$ as in Theorem \ref{thm: level 576}.
Then for $\sg \in \Gm^{(24p)},$
$$
A_{\theta_{\chi \psi_j}}(\sg, n)
= 2\xi(\sg^{-1})\chi(n)\cdot {}^t \! e_{i(n)} \cdot
\varrho_{B_1, p}(\sg^{-1}) \cdot {}_{B_1}\!c_{B_2} \cdot e_j,$$
where
$i(n) \in \{ 1,\dots, \frac{p+1}2\}$
is $\frac{p+1}2$ if $p|n$ and otherwise
is the unique element of $\{ 1,\dots, \frac{p-1}2\}$
satisfying $i(n)^2 \equiv n^2 \pmod{p}.$
\end{thm}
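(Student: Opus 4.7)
My plan is to combine the adelic reformulation from the preceding theorem with the explicit action of $\wt K_p^{(p)}$ on $V$ in the basis $B_1$. Given $\sigma\in\Gm^{(24p)}$, that theorem reads
\[
(\theta_{\chi\psi_j}^{\Maa}\hs\sigma)(z)=\xi(\sigma^{-1})\cdot\Theta_{\ad}^e\!\left(\bigl[r_p^{e_p}(\sigma^{-1},1).\phi_p^{\psi_j}\bigr]\cdot\phi_2^{\chi_2}\phi_3^{\chi_3}\prod_{v\ne 2,3,p}\phi_v^\circ;\,i_\infty(b_z)\right).
\]

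Since $\phi_p^{\psi_j}$ is the $j$th element of $B_2$ and $V$ is preserved by $r_p^{e_p}$ on $\wt K_p^{(p)}$, the function $r_p^{e_p}(\sigma^{-1},1).\phi_p^{\psi_j}$ has $B_1$-coordinate vector $v:=\varrho_{B_1,p}(\sigma^{-1})\cdot{}_{B_1}c_{B_2}\cdot e_j$; thus it equals $\sum_k v_k(B_1)_k$, where $(B_1)_k=\1_{\boxed{k}}$ for $1\le k\le(p-1)/2$ and $(B_1)_{(p+1)/2}=\1_{\boxed{0}}$. I would then substitute this linear combination into the $\Theta_{\ad}^e$ above and repeat the computation of Lemma \ref{lem: relate maass theta to adelic} term by term.

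The support conditions at the places $2,3$ and all $\ell\ne 2,3,p$ force the summation variable $\xi=n$ to be an integer coprime to $6$, each weighted by $\chi(n)\,e^{2\pi in^2z}\,\Im(z)^{1/4}$. The $p$-adic factor contributes precisely the single value $v_{i(n)}$, because $\1_{\boxed{k}}(n)=1$ iff $n^2\equiv k^2\pmod p$ (using that for odd $p$, a $p$-adic integer is a square iff its reduction mod $p$ is), and this selects exactly the index $i(n)$ of the statement. Dividing through by $\Im(z)^{1/4}$ and grouping the $n$ and $-n$ contributions, using that $\chi$ is even and $i(-n)=i(n)$, yields
\[
(\theta_{\chi\psi_j}\hs\sigma)(z)=2\xi(\sigma^{-1})\sum_{n=1}^\infty\chi(n)\,v_{i(n)}\,e^{2\pi in^2z}.
\]
Reading off the coefficient of $e^{2\pi in^2z}$ and writing $v_{i(n)}={}^t\!e_{i(n)}\,v$ gives the claimed formula; non-square frequencies trivially receive coefficient $0$.

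The main care needed is bookkeeping: one must verify that placing $\1_{\boxed{0}}$ at position $(p+1)/2$ in $B_1$ matches the definition of $i(n)$ in the statement, and keep track of the $n\leftrightarrow -n$ doubling that produces the factor of $2$. Everything substantive has already been set up in the preceding sections on the adelic/classical dictionary and on the Weil-representation action on $V$, so no further structural obstacle remains.
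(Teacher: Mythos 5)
Your proposal is correct and follows essentially the same route as the paper: the adelic reformulation from the preceding theorem, the identification of the $B_1$-coordinate vector $\varrho_{B_1,p}(\sg^{-1})\cdot{}_{B_1}c_{B_2}\cdot e_j$, and a term-by-term repeat of the computation in Lemma \ref{lem: relate maass theta to adelic} (the paper packages the latter step as a row vector of auxiliary theta functions $\theta_{\chi,[k]}^{[p]}$, but this is only a notational difference). The $n\leftrightarrow -n$ doubling and the identification of $i(n)$ via $\1_{\boxed{k}}(n)$ are exactly as in the paper's argument.
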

\begin{proof}
Arguing as before, we find that
$$
\left(\theta_{\chi \psi_j}^{\Maa} \ms \sg\right) (z)
= \xi(\sg^{-1})\Theta_\ad^e\left([r^{e_p}(\sg^{-1}, 1).\phi_p^{\psi_j}]\cdot \prod_{v\ne p} \phi_v^{\chi}  ; \;\;i_\infty( b_z)\right).
$$
Note that $e_j = [ \phi_p^{\psi_j}]_{B_2}.$
Thus
$$\Big[r^{e_p}(\sg^{-1}, 1).\phi_p^{\psi_j}\Big]_{B_1}
= \left[\left.r^{e_p}(\sg^{-1}, 1)\right|_{V}\right]_{B_1}
=\varrho_{B_1, p}(\sg^{-1}) \cdot {}_{B_1}c_{B_2} \cdot e_j.
$$
Thus
\[
^t e_k\cdot \varrho_{B_1, p}(\sg^{-1}) \cdot {}_{B_1}c_{B_2} \cdot e_j.
\]
is the coefficient of $\1_{\boxed{k}}$ in the
expansion of $[r^{e_p}(\sg^{-1}, 1).\phi_p^{\psi_j}]$
in terms of $B_1.$
For $k \in \{ 0, \dots, \frac{q-1}2\},$ let
$$\theta_{\chi, [k]}^{[p]}(z)
= \Theta_\ad^e \left( \1_{\boxed{k}} \cdot \prod_{v \ne p} \phi^\chi; i_\infty( b_z) \right).
$$
Let $\pmb{\theta}_\chi^{[p]}= \bbm
\theta_{\chi, [1]}^{[p]}, &
\dots, & \theta_{\chi, [\frac{p-1}2]}^{[p]},
& \theta_{\chi, [0]}^{[p]}\ebm.$
Then by linearity of $\Theta_\ad^e$ in the
first argument, one obtains,
$$\left(\theta_{\chi \psi_j}^{\Maa} \ms \sg\right)
= \xi(\sg^{-1})\cdot \pmb{\theta}_\chi^{[p]}\cdot \varrho_{B_1, p}(\sg^{-1}) \cdot {}_{B_1}c_{B_2} \cdot e_j.$$
Now, following the proof of Lemma \ref{lem: relate maass theta to adelic}, one readily checks that
$$
\theta_{\chi, [k]}^{[p]}(z) =\sum_{n=-\infty}^\infty
\1_{\boxed{k}}(n) \chi(n) e^{-2\pi i n^2 z}.
$$
Thus
$$
A_{\theta_{\chi, [k]}^{[p]}}(I_2, n)
= \begin{cases}
2\chi(n), & \text{if $k\equiv i\pmod{n}$,} \\
0,        & \text{if $k\not\equiv i\pmod{n}$.}
\end{cases}
$$
From here, the result follows easily.
\end{proof}

\end{document}